\theoremstyle{thmstyleone}%
\newtheorem{theorem}{Theorem}[section]
\newtheorem{proposition}[theorem]{Proposition}%
\theoremstyle{thmstyletwo}%
\theoremstyle{thmstylethree}%
\newtheorem{definition}[theorem]{Definition}%
\newtheorem{lemma}[theorem]{Lemma}
\newtheorem{corollary}[theorem]{Corollary}
\newtheorem{example}[theorem]{Example}%
\newtheorem{remark}[theorem]{Remark}%
\numberwithin{equation}{section}
\begin{document}
 
\newpage
\title[The Cowen-Douglas Theory for Operator Tuples and Similarity]{The Cowen-Douglas Theory for Operator Tuples and Similarity}

\author{Kui Ji} \author{Shanshan Ji} \author{Hyun-Kyoung Kwon}\author{Jing Xu}



\abstract{We are concerned with the similarity problem for Cowen-Douglas operator tuples. The unitary equivalence counterpart was already investigated in the 1970's and geometric concepts including vector bundles and curvature appeared in the description. As the Cowen-Douglas conjecture show, the study of the similarity problem has not been so successful until quite recently. The latest results reveal the close correlation between complex geometry, the corona problem, and the similarity problem for single Cowen-Douglas operators. Without making use of the corona theorems that no longer hold in the multi-variable setting, we prove that the single operator results for similarity remain true for Cowen-Douglas operator tuples as well.}

\keywords{The Cowen-Douglas class, Similarity, Complex bundles, Curvature inequality}


\pacs[MSC]{Primary {47B13,} 32L05; Secondary {32A10,} {32A36,} 32Q05}

\maketitle

\section{Introduction}\label{sec1}
To study equivalence problems for bounded linear operators on Hilbert space to which standard methods do not apply, M. J. Cowen and R. G. Douglas introduced in the late 1970's, a class of operators with a holomorphic eigenvector bundle structure \cite{CD,CD2}. Prior to the introduction of the Cowen-Douglas class, the discussion on operator equivalence for even the adjoints of various shift operators (the most-mentioned entities in the study of operators) was non-existent. Their influential work connects concepts and results from complex geometry to the fundamental problem of determining operator equivalence. Note that when one considers operators that are defined on finite-dimensional Hilbert space, the well-known Jordan Canonical Form Theorem and the results by C. Pearcy \cite{Pearcy} and W. Specht \cite{Specht} give a complete answer to this problem.

For a complex separable Hilbert space $\mathcal{H}$, let $\mathcal{L}(\mathcal{H})$ denote the algebra of bounded linear operators on $\mathcal{H}$. For $m \in \mathbb{N}$, let $\mathbf{T}=(T_{1},\cdots,T_{m}) \in \mathcal{L}(\mathcal{H})^m$ and $\mathbf{S}=(S_{1},\cdots,S_{m})\in \mathcal{L}(\mathcal{H})^m$ be $m$-tuples of commuting operators on $\mathcal{H}$. If there is a unitary operator $U\in \mathcal{L}(\mathcal{H})$ such that $U\mathbf{T}=\mathbf{S}U$, then $\mathbf{T}$ and $\mathbf{S}$ are said to be \emph{unitarily equivalent} (denoted by $\mathbf{T}\sim_{u}\mathbf{S}$). If there is an invertible operator $X\in \mathcal{L}(\mathcal{H})$ such that $X\mathbf{T}=\mathbf{S}X$, then $\mathbf{T}$ and $\mathbf{S}$ are \emph{similar} (denoted by $\mathbf{T}\sim_{s}\mathbf{S}$).

Given $\mathbf{T}=(T_1, \cdots, T_m) \in \mathcal{L}(\mathcal{H})^m$, first define an operator $\mathscr{D}_{\mathbf{T}}: \mathcal{H}\longrightarrow\mathcal{H}\oplus\cdots\oplus\mathcal{H}$ by
$$\mathscr{D}_{\mathbf{T}}h=(T_{1}h,\cdots,T_{m}h),$$
for $h \in \mathcal{H}$. Let $\Omega$ be a bounded domain of the $m$-dimensional complex plane $\mathbb{C}^{m}$ and consider $w=(w_{1},\cdots,w_{m}) \in \Omega$. If one sets $\mathbf{T}-w:=(T_{1}-w_{1},\cdots,T_{m}-w_{m})$, then it is easily seen that  $\ker\mathscr{D}_{\mathbf{T}-w}=\bigcap \limits_{i=1}^{m}\ker(T_{i}-w_{i}).$

\begin{definition}
For $m, n \in \mathbb{N}$ and $\Omega \subset \mathbb{C}^m$, \emph{the Cowen-Douglas class} $\mathbf{\mathcal{B}}_{n}^{m}(\Omega)$ consists of $m$-tuples of commuting operators $\mathbf{T}=(T_{1},\cdots,T_{m}) \in \mathcal{L}(\mathcal{H})^m$ satisfying the following conditions:
\begin{itemize}
  \item [(1)]$\text{ran} \mathscr{D}_{\mathbf{T}-w}$ is closed for all $w \in \Omega$;
  \item [(2)]$\dim \ker\mathscr{D}_{\mathbf{T}-w}=n$ for all $w \in \Omega$; and
  \item [(3)] $\bigvee_{w \in \Omega} \ker\mathscr{D}_{\mathbf{T}-w}$ is dense in $\mathcal{H}$.
\end{itemize}
\end{definition}
For an $m$-tuple of commuting operators $\mathbf{T}=(T_{1},\cdots,T_{m}) \in \mathbf{\mathcal{B}}_{n}^{m}(\Omega),$ M. J. Cowen and R. G. Douglas proved in \cite{CD, CD2} that an associated holomorphic eigenvector bundle $\mathcal{E}_{\mathbf{T}}$ over $\Omega$ of rank $n$ exists, where
$$\mathcal{E}_\mathbf{T}=\{(w, x)\in \Omega\times \mathcal
H: x \in \ker\mathscr{D}_{\mathbf{T}-w}\},\quad \pi(w,x)=w.$$
Furthermore, it was shown that two operator tuples $\mathbf{T}$ and $\widetilde{\mathbf{T}}$ in $\mathbf{\mathcal{B}}_{n}^{m}(\Omega)$ are unitarily equivalent if and only if the vector bundles $\mathcal{E}_\mathbf{T}$ and $\mathcal{E}_{\widetilde{\mathbf{T}}}$ are equivalent as Hermitian holomorphic vector bundles. They also showed that every $m$-tuple $\mathbf{T}\in \mathcal{B}_{n}^{m}(\Omega)$ can be realized as the adjoint of an $m$-tuple of multiplication operators by the coordinate functions on a Hilbert space of holomorphic functions on $\Omega^{*}=\{\overline{w}: w \in \Omega\}$.

For $\mathbf{T} \in \mathbf{\mathcal{B}}_{n}^{m}(\Omega),$ if we let $\sigma=\{\sigma_{1},\ldots,\sigma_{n}\}$ be a holomorphic frame of $\mathcal{E}_\mathbf{T}$ and form the Gram matrix $h(w)=(\langle \sigma_j(w),\sigma_i(w) \rangle)_{i,j=1}^{n}$ for $w\in\Omega,$
then the \emph{curvature} $\mathcal{K}_\mathbf{T}$
and the corresponding \emph{curvature matrix} $K_{\mathbf{T}}$ with entries $K_{\mathbf{T}}^{i,j}, 1 \leq i, j \leq m$, of the bundle $\mathcal{E}_{\mathbf{T}}$ are given by the formulas
\begin{equation}\label{curvature}
\begin{split}
\mathcal{K}_\mathbf{T}(w)&=-\sum \limits_{i,j=1}^{m}\frac{\partial}{\partial \overline{w}_{j}}\left(h^{-1}(w)\frac{\partial h(w)}{\partial w_{i}} \right)dw_{i}\wedge d\overline{w}_{j},\\
&K_{\mathbf{T}}^{i,j}(w)=-\frac{\partial}{\partial \overline{w}_{j}}\left(h^{-1}(w)\frac{\partial h(w)}{\partial w_{i}} \right).
\end{split}
\end{equation}
Note that we omit the notation $\mathcal{E}$ in $\mathcal{K}_{\bold{T}}$ without any ambiguity. Since the above formulas depend on the selection of the holomorphic frame, they are also written as $\mathcal{K}_\mathbf{T}(\sigma)$ and $K_{\mathbf{T}}^{i,j}(\sigma)$, respectively, should the need arise.
In the special case of $\mathbf{T}\in \mathcal{B}_{1}^{m}(\Omega)$, the curvature of the line bundle $\mathcal{E}_{\mathbf{T}}$ can be defined alternately as
$$\mathcal{K}_\mathbf{T}(w)=-\sum \limits_{i,j=1}^{m}\frac{\partial^{2}\log\Vert\gamma(w)\Vert^{2}}{\partial w_{i}\partial \overline{w}_{j}}dw_{i}\wedge d\overline{w}_{j},$$
where $\gamma$ is a non-vanishing holomorphic section of $\mathcal{E}_\mathbf{T}$. In \cite{BKM}, S. Biswas, D. K. Keshari, and G. Misra showed that the curvature matrix $K_{\mathbf{T}}$ of any $\mathbf{T}\in\mathbf{\mathcal{B}}_{1}^{m}(\Omega)$ is negative-definite. Moreover, in \cite{CD}, it was shown that for $\mathbf{T},\widetilde{\mathbf{T}}\in \mathcal{B}_{1}^{1}(\Omega)$,  $\mathbf{T}\sim_{u}\widetilde{\mathbf{T}}$ if and only if $\mathcal{K}_\mathbf{T}=\mathcal{K}_{\widetilde{\mathbf{T}}}.$ The curvature $\mathcal{K}_{\bold{T}}$, along with certain covariant derivations of the curvature, form a complete set of unitary invariants for an operator  $\bold{T} \in \mathcal{B}_{n}^{1}(\Omega)$ and this is another main result of \cite{CD}. In \cite{CS},
R. E. Curto and N. Salinas established a relationship between the class $\mathcal{B}_{n}^{m}(\Omega)$ and generalized reproducing kernels to describe when two $m$-tuples are unitarily equivalent. A similarity result for Cowen-Douglas operators in geometric terms such as curvature had been much more difficult to obtain. In fact, the work of D. N. Clark and G. Misra in \cite{CM2, CM} showed that the Cowen-Douglas conjecture that similarity can be determined from the behavior of the quotient of the entries of curvature matrices was false.

The corona problem of complex analysis is closely related to operator theory and complex geometry \cite{Carleson1962, N, NKN, NF1970, U}. In particular, M. Uchiyama characterized the contractive operators that are similar to the adjoint of some multiplication operator in the work \cite{U} based on the corona theorem due to M. Rosenblum in \cite{R}. The following lemma given by N. K. Nikolski shows how to use the notion of projections $\mathcal{P}(w)$,  $\mathcal{P}(w)=\mathcal{P}(w)^2$, to solve the corona problem. The space $H^{\infty}_{E_{*}\rightarrow E}(\Omega)$ denotes the algebra of bounded analytic functions defined on a domain $\Omega \subset \mathbb{C}^m$ whose function values are bounded linear operators from a Hilbert space $E_{*}$ to another one $E$:

\begin{lemma}[Nikolski's Lemma]
Let $F\in H^\infty _{E_*\rightarrow E}(\Omega)$ satisfy $F^*(z)F(z)>\delta^2 $ for some $\delta>0$ and for all $z\in\Omega$.
Then $F$ is left invertible in $H^\infty _{E_*\rightarrow E}(\Omega)$ (i.e., there exists a $G\in H^\infty _{E\rightarrow E_*}(\Omega)$ such that
$GF\equiv I$) if and only if there exists a function $\mathcal{P}\in H^\infty _{E\rightarrow E}(\Omega)$ whose values are
projections (not necessarily orthogonal) onto $F(z)E$ for all $z\in\Omega$. Moreover, if such an analytic projection $\mathcal{P}$ exists, then one can find a left inverse
$G\in H^\infty _{E\rightarrow E_*}(\Omega)$ satisfying $\|G\|_{\infty}\leq \delta^{-1}\|\mathcal{P}\|_{\infty}$.
\end{lemma}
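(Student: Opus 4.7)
The plan is to prove the two implications separately, with the main work going into the converse direction.

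For the easy direction, assume $G\in H^\infty_{E\to E_*}(\Omega)$ satisfies $GF\equiv I$ and set $\mathcal{P}(z):=F(z)G(z)$. Then $\mathcal{P}$ manifestly lies in $H^\infty_{E\to E}(\Omega)$, the identity $\mathcal{P}(z)^2=F(z)G(z)F(z)G(z)=F(z)G(z)=\mathcal{P}(z)$ shows that its values are idempotents, and $\mathcal{P}(z)F(z)e=F(z)e$ for $e\in E_*$, combined with the trivial inclusion $\mathrm{ran}\,\mathcal{P}(z)\subset F(z)E_*$, gives $\mathrm{ran}\,\mathcal{P}(z)=F(z)E_*$.

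For the converse, assume $\mathcal{P}\in H^\infty_{E\to E}(\Omega)$ is an analytic projection with $\mathrm{ran}\,\mathcal{P}(z)=F(z)E_*$, so that in particular $\mathcal{P}(z)F(z)=F(z)$. The hypothesis $F^*(z)F(z)\geq\delta^2 I$ forces $\|F(z)e\|\geq\delta\|e\|$ for every $e\in E_*$ and $z\in\Omega$, so each $F(z)$ is injective with closed range. Since $\mathcal{P}(z)y\in F(z)E_*$ for every $y\in E$, there is a unique $G(z)y\in E_*$ satisfying
\begin{equation*}
F(z)\,G(z)y=\mathcal{P}(z)y,
\end{equation*}
and the lower bound on $F(z)$ yields $\|G(z)y\|\leq\delta^{-1}\|\mathcal{P}(z)y\|$, which will produce the estimate $\|G\|_\infty\leq\delta^{-1}\|\mathcal{P}\|_\infty$ once analyticity is established. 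The identity $G(z)F(z)=I_{E_*}$ follows upon substituting $y=F(z)e$ and invoking $\mathcal{P}(z)F(z)=F(z)$ together with the injectivity of $F(z)$.

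The central and most delicate step is verifying that the pointwise-defined $G$ is holomorphic on $\Omega$. A concrete smooth representative is the Moore--Penrose-type expression $G(z)=(F^*(z)F(z))^{-1}F^*(z)\mathcal{P}(z)$, which is real-analytic in $z$ thanks to the lower bound on $F^*F$, even though $F^*$ itself is anti-holomorphic. Because weak and strong holomorphy coincide for bounded operator-valued functions, it suffices to show that $z\mapsto G(z)y$ is holomorphic for each fixed $y\in E$. Applying each Cauchy--Riemann operator $\partial/\partial\overline{z}_i$ to $F(z)G(z)y=\mathcal{P}(z)y$ and using that $F$ and $\mathcal{P}$ are holomorphic gives
\begin{equation*}
F(z)\,\frac{\partial\bigl(G(z)y\bigr)}{\partial\overline{z}_i}=0,
\end{equation*}
whereupon the injectivity of $F(z)$ forces $\partial(G(z)y)/\partial\overline{z}_i\equiv 0$ for every $i$. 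I expect the main obstacle to be precisely this regularity bootstrap: one must produce a smooth representative before the $\overline\partial$-computation is justified, and the Moore--Penrose formula above is what makes this legitimate. Combined with the norm bound already derived, this completes the argument and delivers the claimed $G\in H^\infty_{E\to E_*}(\Omega)$ with $\|G\|_\infty\leq\delta^{-1}\|\mathcal{P}\|_\infty$.
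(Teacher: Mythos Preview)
The paper does not prove this lemma; it is quoted as a known result due to Nikolski and stated without proof in the introduction. Your argument is correct and is essentially the standard proof: the forward direction via $\mathcal{P}=FG$, and the converse by defining $G(z)$ pointwise as the inverse of $F(z)$ restricted to its range composed with $\mathcal{P}(z)$, with holomorphicity obtained from the smooth Moore--Penrose representation $G=(F^*F)^{-1}F^*\mathcal{P}$ together with the $\overline\partial$-computation on the identity $FG=\mathcal{P}$.
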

For $T\in \mathcal{B}_{n}^{1}(\mathbb{D})$, where $\mathbb{D} \subset \mathbb{C}$ denotes the unit disk, let $\Pi(w)$ denote the orthogonal projection onto $\ker(T-w)$, for each $w \in \mathbb{D}$.
In \cite{KT}, the third author and S. Treil described when a contraction
$T\in \mathcal{B}_{n}^{1}(\mathbb{D})$ is similar to $n$ copies of $M^*_z$, the adjoint of the shift operator on the
Hardy space of the unit disk $\mathbb{D}$, in terms of the curvature matrices. It was proven that
$T\sim{s}\bigoplus\limits_{1}^n M^*_z$ if and only if
$$\left \|\frac{\partial \Pi(w)}{\partial
w}\right\|^2_{\mathfrak{S}_2}-\frac{n}{(1-\vert w\vert^2)^2}\leq\frac{\partial^{2}\psi(w)}{\partial\overline{w}\partial w},$$ for some bounded subharmonic function $\psi$ defined on $\mathbb{D}$ and for all $w \in \mathbb{D}$. For $T\in \mathcal{B}_{n}^{1}(\mathbb{D})$,  $\text{trace }K_T=-\left\|\frac{\partial \Pi(w)}{\partial w}\right \|_{\mathfrak{S}_2}^2$ (see \cite{HJK, JS}), while $\text{trace }K_{M^*_z}=-\frac{1}{(1-\vert w\vert^2)^2}$ for the Hardy shift $M^*_z$. The result was then generalized to other shift operators in \cite{DKT}. Note that this is consistent with the curvature inequality result of G. Misra given in \cite{M}.

The similarity results mentioned above rely, to some extent, on a model theorem and a well-known $\overline{\partial}$-method that has been used extensively in recent years to solve numerous versions of the corona problem \cite{Lars1967}. For a contraction $T\in\mathcal{L}(\mathcal{H})$, the model theory of B. Sz.-Nagy and C. Foia\c{s} provides the canonical model as a complete set of invariants. Model theorems for operator tuples in $\mathcal{L}(\mathcal{H})^m$ were also studied under various assumptions \cite{NF1970, NF}. Since one cannot relate the similarity problem to the corona problem easily anymore in the multi-operator setting, we propose an alternative approach in managing the similarity of tuples of operators in $\mathcal{B}_{n}^{m}(\Omega)$ for $m>1$.

Inspired by the previous similarity results, we first give a sufficient condition for the similarity between $\mathbf{T} \in \mathcal{B}_{1}^{m}(\mathbb{B}_{m})$ and the adjoint of the operator tuple $\mathbf{M}_z=(M_{z_1}, M_{z_2}, \cdots, M_{z_m})$ on a weighted Bergman space defined on the unit ball $\mathbb{B}_m=\{z \in \mathbb{C}^m: \vert z\vert < 1 \}$ of $\mathbb{C}^m$. The characterization is given in terms of the defect operator $\mathcal{D}_{\mathbf{T}}$ corresponding to $\mathbf{T}$. Note that there already exist a number of necessary conditions for similarity -- it is a sufficient condition that had been hard to obtain. Throughout the paper,  $\mathbf{M}_z=(M_{z_1}, M_{z_2}, \cdots, M_{z_m})$ defined on a Hilbert space $\mathcal{H}$ of holomorphic functions on $\mathbb{B}_m$ will denote the tuple of multiplication operators by the coordinate functions
$$
(M_{z_i}f)(z)=z_if(z),
$$
for $f \in \mathcal{H}$ and $z \in \mathbb{B}_m$. It can be checked that  $\mathbf{M}_z^* \in \mathcal{B}_1^m(\mathbb{B}_m),$
where $\mathbf{M}^*_z=(M_{z_1}^*, M_{z_2}^*, \cdots, M_{z_m}^*)$.

\begin{theorem}
Let $\mathbf{T}=(T_{1},\cdots,T_{m})\in \mathcal{B}_{1}^{m}(\mathbb{B}_{m}) \subset \mathcal{L}(\mathcal{H})^m$ and consider the operator tuple $\mathbf{M}^*_z=(M^*_{z_1},\cdots,M^*_{z_m})$ on a weighted Bergman space $\mathcal{H}_k$, where $k > m+1$. Suppose that $(I-\sum\limits_{i=1}^{m}T_{i}^{*}T_{i})^{k}\geq0$  and $\lim\limits_{j}f_{j}(\mathbf{T}^{*}, \mathbf{T})h=0, h\in \mathcal{H}$, where $f_j(z,w)=\sum\limits_{i=j}^{\infty}\mathbf{e}_{i}(z)(1-\langle z, w\rangle)^{k}\mathbf{e}_{i}(w)^*$, for an orthonormal basis  $\{\mathbf{e}_i\}_{i=0}^{\infty}$ for $\mathcal{H}_{k}$.
If there exist a non-vanishing holomorphic section $t$ of $\mathcal{E}_\textbf{T}$ and a unit vector $\zeta_0 \in \overline{\text{ran } \mathcal{D}_{\mathbf{T}}}$ such that
\begin{equation*}
\sup\limits_{w\in\mathbb{B}_m}\frac{\Vert \mathcal{D}_{\mathbf{T}}t(w)\Vert^{2}}{\lvert\langle \mathcal{D}_{\mathbf{T}}t(w),\zeta_{0}\rangle\rvert^{2}}<\infty,
\end{equation*}
then $\mathbf{T} \sim_{s} \mathbf{M}_z^{*}$.
\end{theorem}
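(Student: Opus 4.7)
The plan is to bypass the multi-variable corona theorem (which is known to fail on the ball for $m>1$) by constructing a bounded invertible intertwiner $Y:\mathcal{H}_k\to\mathcal{H}$ satisfying $Y\mathbf{M}_z^{*}=\mathbf{T}\,Y$ directly from the Cowen-Douglas bundle data, rather than extracting it from an $H^{\infty}$ left inverse as in the single-variable arguments of \cite{KT, DKT}. The argument proceeds in three stages.

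The first stage is to read the two operator hypotheses as furnishing a weighted Bergman-space model. Setting $\mathcal{D}_{\mathbf{T}}:=\bigl[(I-\sum_{i}T_{i}^{*}T_{i})^{k}\bigr]^{1/2}$ and $\mathcal{F}:=\overline{\mathrm{ran}\,\mathcal{D}_{\mathbf{T}}}$, the positivity hypothesis is exactly what is needed for $\mathcal{D}_{\mathbf{T}}$ to be well defined, while the vanishing of $f_{j}(\mathbf{T}^{*},\mathbf{T})h$ is the telescoping purity condition coming from the tail of the reproducing-kernel expansion of $(1-\langle z,w\rangle)^{-k}$. Together they produce an isometric model map
\begin{equation*}
V:\mathcal{H}\longrightarrow\mathcal{H}_{k}\otimes\mathcal{F},\qquad VT_{i}\;=\;(M_{z_{i}}^{*}\otimes I_{\mathcal{F}})\,V,
\end{equation*}
so $\mathbf{T}$ is realized as the restriction of $\mathbf{M}_z^{*}\otimes I_{\mathcal{F}}$ to the invariant subspace $V\mathcal{H}$.

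The second stage is to compress from the vector-valued model down to a scalar one using $\zeta_{0}\in\mathcal{F}$. I would set $Yf:=V^{*}(f\otimes\zeta_{0})$. Then $YM_{z_{i}}^{*}=T_{i}Y$ follows immediately from the intertwining of $V$, and $\|Y\|\leq 1$ from its isometry. Because $Vt(w)$ lies in the one-dimensional joint eigenspace of $\mathbf{M}_z^{*}\otimes I$ at $w$, i.e.\ in $\mathbb{C}K_{\bar w}\otimes\mathcal{F}$ where $K_{\bar w}(z)=(1-\langle z,\bar w\rangle)^{-k}$, one may write $Vt(w)=K_{\bar w}\otimes\mathcal{D}_{\mathbf{T}}t(w)$ after identifying the scalar factor through the explicit form of $V$. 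This forces $YK_{\bar w}=c(w)\,t(w)$ for a non-vanishing holomorphic $c(w)$, and a direct computation using the isometry of $V$ yields
\begin{equation*}
\frac{\|YK_{\bar w}\|_{\mathcal{H}}^{2}}{\|K_{\bar w}\|_{\mathcal{H}_{k}}^{2}}\;=\;\frac{|\langle\mathcal{D}_{\mathbf{T}}t(w),\zeta_{0}\rangle|^{2}}{\|\mathcal{D}_{\mathbf{T}}t(w)\|^{2}}.
\end{equation*}
The sup hypothesis is exactly the statement that the right-hand side is bounded below by some $\delta^{2}>0$ uniformly in $w\in\mathbb{B}_m$, i.e.\ $Y$ is bounded below on the set of reproducing kernels.

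The third and technically hardest stage is to promote this pointwise lower bound on kernels to the global norm estimate $\|Yf\|_{\mathcal{H}}\geq\delta\|f\|_{\mathcal{H}_{k}}$ for every $f\in\mathcal{H}_{k}$, and to deduce surjectivity of $Y$. I would identify $Y^{*}Y$ with the compression of the orthogonal projection $VV^{*}$ (onto $V\mathcal{H}$) to the subspace $\mathcal{H}_{k}\otimes\mathbb{C}\zeta_{0}\cong\mathcal{H}_{k}$, and use the uniform angle bound above, together with a Gramian / reproducing-kernel estimate, to prove $Y^{*}Y\geq\delta^{2}I$. Surjectivity of $Y$ then follows from closedness of $\mathrm{ran}\,Y$, density of the holomorphic frame $\{t(w):w\in\mathbb{B}_m\}$ in $\mathcal{H}$ (condition (3) of the Cowen-Douglas definition), and non-vanishing of $c(w)$. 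This final global lower-bound estimate, carried out without appealing to any corona theorem, is the technical core of the argument and the point at which the multivariable proof must depart from its single-variable predecessors in \cite{KT, DKT}; once it is in place, the similarity $\mathbf{T}\sim_{s}\mathbf{M}_z^{*}$ follows immediately.
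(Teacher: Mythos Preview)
Your stages 1 and 2 are correct and match the paper's setup: the model theorem gives the isometry $V$ with $Vt(w)=K(\cdot,\overline w)\otimes\mathcal{D}_{\mathbf T}t(w)$, and your computation of $\|YK_{\bar w}\|^{2}/\|K_{\bar w}\|^{2}$ is right. The problem is stage 3. What you have is that the Berezin transform of the positive operator $Y^{*}Y$ is bounded below by $\delta^{2}$; this does \emph{not} imply $Y^{*}Y\geq\delta^{2}I$. An operator can satisfy $\langle AK_{\bar w},K_{\bar w}\rangle\geq\delta^{2}\|K_{\bar w}\|^{2}$ for all $w$ and still fail to be bounded below. Your $Y^{*}Y$ does not commute with the shift tuple (you only have $YM_{z_i}^{*}=T_iY$, not a two-sided relation), so there is no Toeplitz-type structure to exploit, and ``a Gramian / reproducing-kernel estimate'' is not an argument. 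This is precisely the corona-type obstruction you said you wanted to avoid, and you have not avoided it; you have pushed it into an unproved assertion.

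The paper sidesteps this entirely by never trying to prove a lower bound. Instead it uses the sup hypothesis to see that $w\mapsto\eta(w)/\psi(w)$ is bounded, where $\psi(w)=\langle\mathcal{D}_{\mathbf T}t(w),\zeta_0\rangle$ and $\eta(w)=\mathcal{D}_{\mathbf T}t(w)-\psi(w)\zeta_0$; since $k>m+1$ forces $\mathrm{Mult}(\mathcal{H}_k\otimes E,\mathcal{H}_k)=H^{\infty}_{E\to\mathbb C}(\mathbb B_m)$, this gives a bounded multiplication operator $M_F$ on $\mathcal{H}_k$. One then sets $Y:=(I+M_FM_F^{*})^{1/2}$, which is \emph{automatically} invertible because $I+M_FM_F^{*}\geq I$. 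A curvature computation shows $\mathcal{K}_{\mathbf T}=\mathcal{K}_{Y\mathbf{M}_z^{*}Y^{-1}}$, and since curvature is a complete unitary invariant in $\mathcal{B}_1^m$, this yields $\mathbf T\sim_u Y\mathbf{M}_z^{*}Y^{-1}\sim_s\mathbf{M}_z^{*}$. The key idea you are missing is: do not prove invertibility, build it in from the start via $(I+M_FM_F^{*})^{1/2}$, and let the rank-one curvature rigidity do the matching.
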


Although there is no general corona theorem that works for higher-order domains, a related condition that will be called \emph{condition $(\mathbf{C})$} in Section 3 will play a significant role in the formulation of another sufficient condition for similarity in terms of curvature matrices. The space $H^{\infty}(\Omega)$ denotes the collection of bounded analytic functions defined on the domain $\Omega$.

\begin{theorem}
Let $\mathbf{T}=(T_{1},\cdots,T_{m})\in \mathcal{B}_{n}^{m}(\mathbb{B}_m)$ and consider the operator tuple $\mathbf{M}_{z}^{*}=(M_{z_1}^{*},\cdots,M_{z_m}^{*}) \in \mathcal{B}_{n_1}^m(\mathbb{B}_m)$ on a Hilbert space $\mathcal{H}_{K}$ defined on $\mathbb{B}_m$ with reproducing kernel $K.$ Suppose that $n_2:=n/n_1 \in \mathbb{N}$ and that there exist an isometry $V$ and a Hermitian holomorphic vector bundle $\mathcal{E}$ over $\mathbb{B}_m$ such that
\begin{equation*}
VK_{\mathbf{T}, w^I{\overline{w}}^J}V^*-K_{\mathbf{M}_z^*,w^I{\overline{w}}^J} \otimes I_{n_2}=I_{n_1} \otimes {K}_{\mathcal{E},w^{I}\overline{w}^{J}}, \quad I,J \in\mathbb{N}_{0}^{m}.
\end{equation*}
If the bundle $\mathcal{E}$ satisfies condition $(\mathbf{C})$ via $\mathcal{H}_K$, then $\mathbf{T} \sim_{s} \bigoplus \limits_{1}^{n_2} \mathbf{M}_z^*$.
\end{theorem}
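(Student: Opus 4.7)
The plan is to read the curvature identity as a bundle-theoretic decomposition and then use condition $(\mathbf{C})$ as a surrogate for the (missing) corona theorem on $\mathbb{B}_{m}$ to upgrade the resulting unitary identification to a similarity. The two ingredients are the multi-variable analogues of the model theorem and Nikolski's Lemma used in the single-variable results of the introduction.

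First I would unpack the hypothesis. By the Cowen-Douglas theorem, the curvature of $\mathcal{E}_{\mathbf{T}}$ together with its covariant derivatives, encoded here by the multi-index quantities $K_{\mathbf{T},\,w^{I}\overline{w}^{J}}$, forms a complete set of unitary invariants. The assumed identity
\[
VK_{\mathbf{T},\,w^{I}\overline{w}^{J}}V^{*}=K_{\mathbf{M}_{z}^{*},\,w^{I}\overline{w}^{J}}\otimes I_{n_{2}}+I_{n_{1}}\otimes K_{\mathcal{E},\,w^{I}\overline{w}^{J}},\qquad I,J\in\mathbb{N}_{0}^{m},
\]
is exactly the standard identity for the curvature (and iterated covariant derivatives) of the tensor-product Hermitian holomorphic bundle $\mathcal{E}_{\mathbf{M}_{z}^{*}}\otimes\mathcal{E}$. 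It therefore identifies $\mathcal{E}_{\mathbf{T}}$, after conjugation by the isometry $V$, with $\mathcal{E}_{\mathbf{M}_{z}^{*}}\otimes\mathcal{E}$ as Hermitian holomorphic bundles over $\mathbb{B}_{m}$. Invoking the concrete realization statement in the introduction, this bundle isomorphism lifts to a unitary equivalence between $\mathbf{T}$ and the tuple of coordinate-multiplication operators on the space of holomorphic sections $\mathcal{H}_{K}\otimes\mathcal{E}$, where $\mathcal{E}$ carries its given Hermitian metric.

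Second, I would apply condition $(\mathbf{C})$, introduced in Section 3, to the bundle $\mathcal{E}$. Since the excerpt presents this condition as a replacement for the corona/Nikolski mechanism on the ball, it should produce a globally bounded holomorphic frame $\{e_{1},\dots,e_{n_{2}}\}$ of $\mathcal{E}$ whose Gram-type multiplication operator admits a bounded left inverse realized inside the multiplier algebra of $\mathcal{H}_{K}$. Equivalently, it yields a bounded, boundedly invertible intertwiner $X:\bigoplus\limits_{1}^{n_{2}}\mathcal{H}_{K}\to\mathcal{H}_{K}\otimes\mathcal{E}$ of the respective coordinate-multiplication tuples. Composing $X$ with the unitary produced in the first step then delivers the similarity $\mathbf{T}\sim_{s}\bigoplus\limits_{1}^{n_{2}}\mathbf{M}_{z}^{*}$.

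The main obstacle, I expect, is the first step. While covariant derivatives of curvature at a fixed point determine a Cowen-Douglas bundle up to unitary equivalence, here one needs a \emph{global} tensor factorization $\mathcal{E}_{\mathbf{T}}\cong\mathcal{E}_{\mathbf{M}_{z}^{*}}\otimes\mathcal{E}$ that transfers into an explicit Hilbert-space identification $\mathcal{H}\cong\mathcal{H}_{K}\otimes\mathcal{E}$ intertwining all $m$ coordinate actions simultaneously. Matching holomorphic frames for the three bundles, verifying the multi-variable intertwining on each factor, and accommodating the fact that $V$ is only asserted to be an isometry (so its range may need to be shown reducing for the target tuple) all require careful bookkeeping. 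A further subtlety is ensuring that the metric on $\mathcal{E}$ produced by this identification actually fits the precise formulation of condition $(\mathbf{C})$ \emph{via $\mathcal{H}_{K}$}, so that the condition can be invoked as stated in the second step.
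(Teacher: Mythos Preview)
Your plan is essentially the paper's proof: it first combines the tensor-product curvature formula (Lemma~\ref{lem3.1.1}) with the hypothesis to match all covariant derivatives, then invokes the congruence theorem of Cowen--Douglas \cite{CD3} to obtain a global unitary $U$ with $U\big((\mathcal{E}_{\mathbf{M}_{z}^{*}}\otimes\mathcal{E})(w)\big)=\mathcal{E}_{\mathbf{T}}(w)$, and finally uses the multipliers $(F^{\#})^{*},(G^{\#})^{*}$ from condition~$(\mathbf{C})$ to show that $M^{*}_{(F^{\#})^{*}}$ is a bounded invertible map sending $\mathcal{E}_{\bigoplus_{1}^{n_{2}}\mathbf{M}_{z}^{*}}(w)$ onto $(\mathcal{E}_{\mathbf{M}_{z}^{*}}\otimes\mathcal{E})(w)$, so that $UM^{*}_{(F^{\#})^{*}}$ realizes the similarity. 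Your flagged ``main obstacle'' is exactly what the citation of \cite{CD3} handles---the isometry $V$ only serves to align curvature invariants, and the global fiberwise unitary $U$ (not $V$) is what gets composed with the multiplier operator; there is no need to realize $\mathcal{H}_{K}\otimes\mathcal{E}$ as a Hilbert space of sections or to worry about reducing subspaces.
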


A number of corollaries are given in Section 4. The result of \cite{DKT} and \cite{KT} describing contractive Cowen-Douglas operators that are similar to ${M}_z^*$ using curvature is generalized in the commuting operator tuples setting. Moreover, this description is also used to obtain a sufficient condition for the similarity between arbitrary Cowen-Douglas operator tuples in $\mathcal{B}_1^m(\Omega)$. The space $\{\mathbf{S}\}'$ denotes the commutant of the operator tuple $\mathbf{S}$.

\begin{theorem}
Let $\mathbf{T}=(T_{1},\cdots,T_{m}),\mathbf{S}=(S_{1},\cdots,S_{m})\in \mathcal{B}_{1}^{m}(\Omega)$ be such that $\{\mathbf{S}\}'\cong H^{\infty}(\Omega)$. Suppose that
$$\mathcal{K}_{\mathbf{S}}(w)-\mathcal{K}_\mathbf{T}(w) = \sum\limits_{i,j=1}^{m}\frac{\partial ^{2}\psi(w)}{\partial w_{i}\partial\overline{w}_{j} }dw_{i}\wedge d\overline{w}_{j},\,\,\,\,w\in\Omega,$$
for some $\psi(w)=\log \sum\limits_{k=1}^{n}\lvert\phi_{k}(w)\rvert^{2}$, where $\phi_{k}$ are holomorphic functions defined on $\Omega$. If there exists an integer $l \leq n$ satisfying $\frac{\phi_k}{\phi_l} \in H^{\infty}(\Omega)$ for all $k \leq n$, then $\mathbf{T}\sim_{s}\mathbf{S}$, and ${K}_{\mathbf{T}}\leq {K}_\mathbf{S}$. In particular, when $m=1$, $\mathbf{T}$ and $\mathbf{S}$ are unitarily equivalent.
\end{theorem}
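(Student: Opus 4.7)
My strategy is to translate the curvature identity into a norm comparison between holomorphic sections of the two line bundles, use the $H^{\infty}$-functional calculus afforded by $\{\mathbf{S}\}'\cong H^{\infty}(\Omega)$ to build a bounded-below intertwining map from $\mathcal{H}_{\mathbf{S}}$ into $\bigoplus_{1}^{n}\mathcal{H}_{\mathbf{S}}$, and then identify the induced Hermitian line bundle on the range with $\mathcal{E}_{\mathbf{T}}$ via the Cowen-Douglas theorem.

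First I would fix non-vanishing holomorphic sections $\gamma_{\mathbf{T}}$ and $\gamma_{\mathbf{S}}$ of $\mathcal{E}_{\mathbf{T}}$ and $\mathcal{E}_{\mathbf{S}}$. The alternate line-bundle curvature formula together with the hypothesis gives
\[
\partial\overline{\partial}\log\bigl(\|\gamma_{\mathbf{T}}(w)\|^{2}/\|\gamma_{\mathbf{S}}(w)\|^{2}\bigr)=\partial\overline{\partial}\log\sum_{k=1}^{n}|\phi_{k}(w)|^{2},
\]
so the quotient $\log\bigl(\|\gamma_{\mathbf{T}}\|^{2}/(\|\gamma_{\mathbf{S}}\|^{2}\sum|\phi_{k}|^{2})\bigr)$ is pluriharmonic and equals $\log|h|^{2}$ for some non-vanishing $h\in\mathcal{O}(\Omega)$. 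Replacing $\gamma_{\mathbf{T}}$ by $\gamma_{\mathbf{T}}/h$, I may assume $\|\gamma_{\mathbf{T}}\|^{2}=\|\gamma_{\mathbf{S}}\|^{2}\sum|\phi_{k}|^{2}$. Since $\phi_{k}/\phi_{l}\in H^{\infty}(\Omega)$ for all $k$, any zero of $\phi_{l}$ would be a common zero of every $\phi_{k}$, contradicting $\|\gamma_{\mathbf{T}}\|^{2}>0$; so $\phi_{l}$ is non-vanishing and a further rescaling $\gamma_{\mathbf{T}}\mapsto\gamma_{\mathbf{T}}/\phi_{l}$ brings me to
\[
\|\gamma_{\mathbf{T}}(w)\|^{2}=\|\gamma_{\mathbf{S}}(w)\|^{2}\sum_{k=1}^{n}|\psi_{k}(w)|^{2},\qquad \psi_{k}:=\phi_{k}/\phi_{l}\in H^{\infty}(\Omega),\quad \psi_{l}\equiv 1.
\]

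Next I would define $A:\mathcal{H}_{\mathbf{S}}\to\bigoplus_{1}^{n}\mathcal{H}_{\mathbf{S}}$ by $Ax=\bigl(\psi_{1}(\mathbf{S})x,\ldots,\psi_{n}(\mathbf{S})x\bigr)$. Each $\psi_{k}(\mathbf{S})\in\{\mathbf{S}\}'$ acts on eigenvectors by $\psi_{k}(\mathbf{S})\gamma_{\mathbf{S}}(w)=\psi_{k}(w)\gamma_{\mathbf{S}}(w)$, so $A$ intertwines $\mathbf{S}$ with $\bigoplus_{1}^{n}\mathbf{S}$, and $A^{*}A=\sum_{k}\psi_{k}(\mathbf{S})^{*}\psi_{k}(\mathbf{S})\geq I$ because $\psi_{l}\equiv 1$. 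Hence $A$ is bounded below, its range $\mathcal{M}$ is closed and $\bigoplus_{1}^{n}\mathbf{S}$-invariant, and $A:\mathcal{H}_{\mathbf{S}}\to\mathcal{M}$ is a similarity between $\mathbf{S}$ and $(\bigoplus_{1}^{n}\mathbf{S})|_{\mathcal{M}}$. The restriction belongs to $\mathcal{B}_{1}^{m}(\Omega)$, its eigenspace at $w$ spanned by $A\gamma_{\mathbf{S}}(w)=\bigl(\psi_{1}(w)\gamma_{\mathbf{S}}(w),\ldots,\psi_{n}(w)\gamma_{\mathbf{S}}(w)\bigr)$ with $\|A\gamma_{\mathbf{S}}(w)\|^{2}=\sum|\psi_{k}(w)|^{2}\|\gamma_{\mathbf{S}}(w)\|^{2}=\|\gamma_{\mathbf{T}}(w)\|^{2}$; so the correspondence $\gamma_{\mathbf{T}}(w)\leftrightarrow A\gamma_{\mathbf{S}}(w)$ is a holomorphic isometric isomorphism of Hermitian line bundles and the Cowen-Douglas theorem \cite{CD,CD2} gives $(\bigoplus_{1}^{n}\mathbf{S})|_{\mathcal{M}}\sim_{u}\mathbf{T}$. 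Composing the two equivalences yields $\mathbf{T}\sim_{s}\mathbf{S}$. The inequality $K_{\mathbf{T}}\leq K_{\mathbf{S}}$ is immediate, since $K_{\mathbf{S}}-K_{\mathbf{T}}$ is the Levi matrix of the plurisubharmonic function $\psi=\log\sum|\phi_{k}|^{2}$, hence positive semi-definite; and for $m=1$ the refinement to unitary equivalence follows from the Cowen-Douglas curvature criterion applied to the line bundle data.

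I expect the principal obstacle to be the verification that $(\bigoplus_{1}^{n}\mathbf{S})|_{\mathcal{M}}$ really satisfies the three defining conditions of $\mathcal{B}_{1}^{m}(\Omega)$---in particular, the closedness of $\text{ran}\,\mathscr{D}$ for the restriction and the density of the span of eigenspaces in $\mathcal{M}$---together with the global passage from a pluriharmonic potential to $\log|h|^{2}$, which is unambiguous on the ball-type domains the paper concentrates on but requires attention for more general $\Omega$. The only other point where care is needed is the identification $\psi_{k}(\mathbf{S})\gamma_{\mathbf{S}}(w)=\psi_{k}(w)\gamma_{\mathbf{S}}(w)$, which uses that the isomorphism $\{\mathbf{S}\}'\cong H^{\infty}(\Omega)$ is implemented by evaluation on the eigenbundle.
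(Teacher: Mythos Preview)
Your proposal is correct and is essentially the paper's own argument: both build $\psi_k := \phi_k/\phi_l \in H^\infty(\Omega) \cong \{\mathbf{S}\}'$, form the column $A = (\psi_1(\mathbf{S}),\ldots,\psi_n(\mathbf{S}))^T$, and exploit $A^*A \geq I$. The paper packages this as $Y := (A^*A)^{1/2} = (I+X^*X)^{1/2}$ (their $X$ is your $A$ with the identity coordinate removed) and checks directly that $\mathcal{K}_{Y\mathbf{S}Y^{-1}} = \mathcal{K}_{\mathbf{T}}$ via the section $Y\gamma_{\mathbf{S}}$, which is your construction up to the polar decomposition $A = VY$; in particular your flagged ``obstacle'' evaporates, since $(\bigoplus\mathbf{S})|_{\mathcal{M}} = A\mathbf{S}A^{-1}$ is similar to $\mathbf{S}$ and similarity preserves membership in $\mathcal{B}_1^m(\Omega)$. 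The paper's phrasing also sidesteps your section-normalization step (and the attendant pluriharmonic-to-$\log|h|^2$ globalization concern) by comparing only curvatures throughout. One caution: your one-line appeal to ``the curvature criterion'' for the $m=1$ unitary-equivalence clause does not work as written, since you only have $K_{\mathbf{T}} \leq K_{\mathbf{S}}$, not equality---though the paper's own proof is silent on that clause as well.
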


The inequalities and identities given in \cite{BKM}, \cite{KT}, and \cite{M} involving curvature matrices are extended as well in the final section.

\section{Preliminaries}\label{sec2}

\subsection{Reproducing kernel Hilbert spaces}\label{subsec2.1}
Let $\mathbb{B}_{m}=\{z\in\mathbb{C}^{m}:\lvert z\rvert<1\}$ be the open unit ball of $\mathbb{C}^{m}$. The space of all holomorphic functions defined on $\mathbb{B}_{m}$ will be denoted as $\mathcal{O}(\mathbb{B}_{m})$ while $H^{\infty}(\mathbb{B}_{m})$ will stand for the space of all bounded holomorphic functions on $\mathbb{B}_{m}$.
For a function $f\in\mathcal{O}(\mathbb{B}_{m})$, the radial derivative of $f$ is defined to be $Rf(z)=\sum\limits_{i=1}^{m}z_{i}\frac{\partial f}{\partial z_{i}}$. Once it is set that $R^{0}f(z)=f(z)$, we have for every $j \in \mathbb{N}$,
$R^{j}f(z)=R(R^{j-1}f(z))$. In particular, for a homogeneous polynomial $f$ of degree $n$, $Rf=nf.$ We will also need the familiar multi-index notation $\alpha=(\alpha_{1},\cdots,\alpha_{m})\in \mathbb{N}_{0}^{m}$. As is well-known, $\lvert\alpha\rvert=\lvert\alpha_{1}\rvert+\cdots+\lvert\alpha_{m}\rvert$ and $\alpha!=\alpha_{1}!\cdots\alpha_{m}!$. For $\alpha, \beta\in \mathbb{N}_{0}^{m}$, $\alpha+\beta=(\alpha_{1}+\beta_{1},\cdots,\alpha_{m}+\beta_{m})$ and $\alpha\leq\beta$ whenever $\alpha_{i}\leq\beta_{i}$ for every  $1\leq i\leq m.$

The work \cite{ARS, hartz, ZhaoZhu2008} offer good references for what follows. For a real number $k$, one can consider the family of holomorphic function spaces
$$\mathcal{H}_{k}=\left\{f=\sum\limits_{\alpha\in\mathbb{N}_{0}^{m}}a_{\alpha}z^{\alpha}\in\mathcal{O}(\mathbb{B}_{m})
:\sum\limits_{\alpha\in\mathbb{N}_{0}^{m}}\lvert a_{\alpha}\rvert^{2}\frac{\alpha!}{\lvert\alpha\rvert!}(\lvert\alpha\rvert+1)^{1-k}<\infty\right\}.$$ Recall that a reproducing kernel Hilbert space is a Hilbert space $\mathcal{H}$ of functions on a set $X$ with the property that the evaluation at each $x\in X$ is a bounded linear functional on $\mathcal{H}.$ By the Riesz representation theorem, for each $x\in X$, there exists a function $k_{x}\in\mathcal{H}$ such that for all $f \in \mathcal{H},$
$$\langle f, k_{x}\rangle=f(x).$$
The function $K:X\times X\longrightarrow\mathbb{C}$ defined by $K(x,y)=k_{y}(x)$ is called the \emph{reproducing kernel} of $\mathcal{H}.$
When $k>0$, $\mathcal{H}_k$ is a reproducing kernel Hilbert space with reproducing kernel
$$K(z,w)=\frac{1}{(1-\langle z, w\rangle)^{k}}.$$
If $n \in \mathbb{N}$ is such that $2n+k-m>0$, then the space $\mathcal{H}_k$ can also be represented as
\begin{equation}\label{5}
\mathcal{H}_{k}=\left\{f\in\mathcal{O}(\mathbb{B}_{m})
:\int_{\mathbb{B}_{m}}\lvert R^{n}f\rvert^{2}(1-\lvert z\rvert^{2})^{2n+k-m-1}dV(z)<\infty\right\},
\end{equation}
where $V$ denotes the normalized volume measure on $\mathbb{B}_{m}$. The spaces $\mathcal{H}_k$ are closely related to the analytic Besov-Sobolev spaces $\mathbf{B}_p^{\sigma}(\mathbb{B}_m)$. Recall that for $n \in \mathbb{N}_0$, $0\leq\sigma<\infty, 1<p<\infty,$ and $n+\sigma>m/p$, the space $\mathbf{B}_{p}^{\sigma}(\mathbb{B}_{m})$ contains $f\in\mathcal{O}(\mathbb{B}_{m})$ with $(\sum\limits_{\lvert\alpha\rvert<n}\lvert\frac{\partial^{\alpha}f}{\partial z^{\alpha}}(0)\rvert^{p}+\int_{\mathbb{B}_{m}}\lvert R^{n}f\rvert^{p}(1-\lvert z\rvert^{2})^{p(n+\sigma)}d\lambda_{m}(z))^{\frac{1}{p}}<\infty,$
where $dz$ denotes the Lebesgue measure on $\mathbb{C}^{m}$ and  $d\lambda_{m}(z)=(1-\lvert z\rvert^{2})^{-m-1}dz$ is the invariant measure on $\mathbb{B}_{m}$. Well-known examples in this family of spaces include
the Dirichlet space $B_2(\mathbb{B}_m)=\mathcal{H}_0=\mathbf{B}_{2}^{0}(\mathbb{B}_{m})$, the Drury-Arveson space $H_{m}^{2}=\mathcal{H}_1=\mathbf{B}_{2}^{1/2}(\mathbb{B}_{m})$, the Hardy space $H^{2}(\mathbb{B}_{m})=\mathcal{H}_m=\mathbf{B}_{2}^{m/2}(\mathbb{B}_{m})$, and the Bergman space $L^{2}(\mathbb{B}_{m})=\mathcal{H}_{m+1}=\mathbf{B}_{2}^{(m+1)/2}(\mathbb{B}_{m})$. Moreover, $\mathcal{H}_k$ with $0<k<1$ give weighted Dirichlet-type spaces while those with $k > m+1$ represent weighted Bergman spaces.

\subsection{Operator-valued multipliers}\label{subsec2.2}
As every Hilbert space $\mathcal{H}$ of functions on a set $X$ comes with a corresponding multiplier algebra
$$\text{Mult}(\mathcal{H})=\{f:X\rightarrow\mathbb{C} : fh\in\mathcal{H} \, \, \text{for all}\, \, h\in\mathcal{H}\},$$
it is natural to consider the multipliers of reproducing kernel Hilbert spaces.
For every multiplier $f\in\text{Mult}(\mathcal{H})$, there is an associated multiplication operator $M_f$ defined by $M_{f}h=fh$ with $\Vert f\Vert_{\text{Mult}(\mathcal{H})}=\Vert M_{f}\Vert$. In particular, for weighted Bergman spaces $\mathcal{H}_{k}$ with $k > m+1$, $\text{Mult}(\mathcal{H}_{k})=H^{\infty}(\mathbb{B}_{m})$. For $m\geq2$, the multiplier norm on the Drury-Arveson space $\mathcal{H}_{1}$ is no longer equal to the supremum norm on the unit ball and therefore, $\text{Mult}(\mathcal{H}_{1})\subsetneq H^{\infty}(\mathbb{B}_{m}).$

Let $E$ be a Hilbert space. The Hilbert space tensor product $\mathcal{H}_{k}\otimes E$ can be regarded as the space of all holomorphic functions $f:\mathbb{B}_{m}\rightarrow E$ with Taylor series $f(z)=\sum\limits_{\alpha\in\mathbb{N}_{0}^{m}}a_{\alpha}z^{\alpha}$, where $a_{\alpha} \in E$ and
$$\sum\limits_{\alpha\in\mathbb{N}_{0}^{m}}\Vert a_{\alpha}\Vert^{2}\frac{\alpha!}{\lvert\alpha\rvert!}(\lvert\alpha\rvert+1)^{1-k}<\infty.$$
Now, for Hilbert spaces $E_{1}$ and $E_{2}$, let $\Phi:\mathbb{B}_{m}\rightarrow \mathcal{L}(E_{1},E_{2})$ be an operator-valued function. Given $h\in\mathcal{H}_{k}\otimes E_{1}$,
we define a function $M_{\Phi}h: \mathbb{B}_{m}\rightarrow E_{2}$ as
$$M_{\Phi}h(z)=\Phi(z)h(z),\quad z\in\mathbb{B}_{m}.$$
Denote by $\text{Mult}(\mathcal{H}_{k}\otimes E_{1},\mathcal{H}_{k}\otimes E_{2})$ the space of all $\Phi$ for which $M_{\Phi}h\in\mathcal{H}_{k}\otimes E_{2}$ for every $h\in\mathcal{H}_{k}\otimes E_{1}$. An element $\Phi \in \text{Mult}(\mathcal{H}_{k}\otimes E_{1},\mathcal{H}_{k}\otimes E_{2})$ is said to be a \emph{multiplier} and $M_{\Phi}$ is called an \emph{operator of multiplication by $\Phi$}. The space $\text{Mult}(\mathcal{H}_{k}\otimes E_{1},\mathcal{H}_{k}\otimes E_{2})$ is endowed with the norm $\Vert\Phi\Vert=\Vert M_{\Phi}\Vert.$ We now list some basic properties of multipliers.

\begin{lemma}
For a weighted Bergman space $\mathcal{H}_{k}$ with $k > m+1$ and a Hilbert space $E$, $$\text{Mult}(\mathcal{H}_{k}\otimes E,\mathcal{H}_{k}\otimes \mathbb{C})=H_{E\rightarrow \mathbb{C}}^{\infty}(\mathbb{B}_{m}).$$
\end{lemma}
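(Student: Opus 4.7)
The plan is to verify both inclusions of the asserted equality, relying on the Bergman-type integral characterization of $\mathcal{H}_k$ given in (\ref{5}) and the standard reproducing-kernel argument for multiplier boundedness.

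For the inclusion $H^{\infty}_{E\to\mathbb{C}}(\mathbb{B}_m)\subseteq\text{Mult}(\mathcal{H}_k\otimes E,\mathcal{H}_k)$, I will use that since $k>m+1$, the choice $n=0$ is admissible in (\ref{5}) because $2\cdot 0+k-m>0$. This yields the equivalent integral norm
\[\|f\|^2_{\mathcal{H}_k}\asymp\int_{\mathbb{B}_m}|f(z)|^2(1-|z|^2)^{k-m-1}\,dV(z),\]
together with the analogous Bochner-integral characterization on $\mathcal{H}_k\otimes E$. For $\Phi\in H^{\infty}_{E\to\mathbb{C}}(\mathbb{B}_m)$ and $h\in\mathcal{H}_k\otimes E$, the pointwise estimate $|\Phi(z)h(z)|\leq\|\Phi\|_\infty\|h(z)\|_E$ integrates against the Bergman weight to give $M_\Phi h\in\mathcal{H}_k$ with $\|M_\Phi h\|_{\mathcal{H}_k}\leq C\|\Phi\|_\infty\|h\|_{\mathcal{H}_k\otimes E}$ for a constant $C=C(k,m)$, so $\Phi$ is a multiplier.

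For the reverse inclusion, suppose $\Phi\in\text{Mult}(\mathcal{H}_k\otimes E,\mathcal{H}_k)$, and let $k_w$ denote the reproducing kernel of $\mathcal{H}_k$ at $w$. A direct computation using $\langle M_\Phi h,k_w\rangle=(M_\Phi h)(w)=\Phi(w)h(w)$ together with the Riesz representation of the bounded functional $\Phi(w)\in\mathcal{L}(E,\mathbb{C})$ by a vector $\psi_w\in E$ satisfying $\|\psi_w\|_E=\|\Phi(w)\|$ shows that $M_\Phi^\ast k_w=k_w\otimes\psi_w$. Consequently $\|\Phi(w)\|\,\|k_w\|=\|M_\Phi^\ast k_w\|\leq\|M_\Phi\|\,\|k_w\|$, and taking the supremum over $w\in\mathbb{B}_m$ gives $\|\Phi\|_\infty\leq\|M_\Phi\|$. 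Holomorphy of $\Phi$ is obtained by applying $M_\Phi$ to the constant sections $1\otimes e$: the image $M_\Phi(1\otimes e)\in\mathcal{H}_k$ is holomorphic in $w$ with value $\Phi(w)e$, so each scalar map $w\mapsto\Phi(w)e$ is holomorphic; a standard weak-to-strong argument then promotes $\Phi$ to a holomorphic $\mathcal{L}(E,\mathbb{C})$-valued function on $\mathbb{B}_m$.

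The argument is essentially routine; the only mild obstacle is verifying that the equivalence between the series norm on $\mathcal{H}_k$ and the integral norm in (\ref{5}) transfers to the vector-valued Hilbert space tensor product $\mathcal{H}_k\otimes E$. This is standard, obtained by decomposing $h$ against an orthonormal basis of $E$ and applying Fubini to interchange summation and integration, so no genuine difficulty arises.
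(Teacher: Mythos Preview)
Your proof is correct and, for the forward inclusion $H^{\infty}_{E\to\mathbb{C}}(\mathbb{B}_m)\subseteq\text{Mult}(\mathcal{H}_k\otimes E,\mathcal{H}_k)$, follows the same route as the paper: take $n=0$ in the integral description (\ref{5}) and integrate the pointwise bound $|\Phi(z)h(z)|\leq\|\Phi\|_\infty\|h(z)\|_E$ against the Bergman weight. The paper checks this only on simple tensors $f\otimes g$ (obtaining $\|M_F\|\leq\|F\|_\infty$ without an equivalence constant) and leaves the extension implicit, while you work with a general $h$ and carry the constant from the norm equivalence; this is a cosmetic difference. For the reverse inclusion the two arguments diverge: the paper dispatches it in a single line by invoking the scalar inclusion $\text{Mult}(\mathcal{H}_k)\subset H^{\infty}(\mathbb{B}_m)$, whereas you give the direct reproducing-kernel computation $M_\Phi^\ast k_w=k_w\otimes\psi_w$ to deduce $\|\Phi(w)\|\leq\|M_\Phi\|$, together with an explicit holomorphy check via constant sections. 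Your version is more self-contained; the paper's is shorter but leans on the scalar fact as a black box.
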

\begin{proof}
Taking $n=0$ in (\ref{5}), we have for every $F\in H_{E\rightarrow \mathbb{C}}^{\infty}(\mathbb{B}_{m})$ and $f(z)\otimes g\in\mathcal{H}_{k}\otimes E$,
\begin{eqnarray*}
\Vert M_{F}(f(z)\otimes g)\Vert_{\mathcal{H}_{k} \otimes \mathbb{C}}^{2}&=&\int_{\mathbb{B}_{m}}\lvert f(z)\otimes F(z)g\rvert^{2}(1-\lvert z\rvert^{2})^{k-m-1}dV(z)\\[4pt]\nonumber
&\leq&\Vert F\Vert_{\infty}^{2}\int_{\mathbb{B}_{m}}\lvert f(z)\rvert^{2} \Vert  g\Vert_{E}^{2}(1-\lvert z\rvert^{2})^{k-m-1}dV(z)\\[4pt]\nonumber
&=&\Vert F \Vert_{\infty}^{2}\Vert f(z)\otimes g\Vert_{\mathcal{H}_{k}\otimes E}^{2}.\nonumber
\end{eqnarray*}
This means that $\Vert M_{F}\Vert\leq\Vert F\Vert_{\infty},$ and therefore, $F\in\text{Mult}(\mathcal{H}_{k}\otimes E,\mathcal{H}_{k}\otimes \mathbb{C}).$
Conversely, since $\text{Mult}(\mathcal{H}_{k})\subset H^{\infty}(\mathbb{B}_{m})$,  $\text{Mult}(\mathcal{H}_{k}\otimes E,\mathcal{H}_{k}\otimes \mathbb{C})\subset H_{E\rightarrow \mathbb{C}}^{\infty}(\mathbb{B}_{m}).$
\end{proof}
The following results are well-known. The first lemma can be found in \cite{AM2002}:
\begin{lemma}\label{lem3.1}
Let $\Phi:\mathbb{B}_{m}\rightarrow \mathcal{L}(E_{1},E_{2})$ be an operator-valued function. If $\Phi\in\text{Mult}(\mathcal{H}_{K}\otimes E_{1},\mathcal{H}_{K}\otimes E_{2})$, then
$$M_{\Phi}^{*}(K(\cdot,\overline{w})\otimes f)=K(\cdot,\overline{w})\otimes \Phi(\overline{w})^{*}f,\quad w\in\mathbb{B}_{m}, f\in E_{2}.$$
Conversely, if $\Phi:\mathbb{B}_{m}\rightarrow \mathcal{L}(E_{1},E_{2})$ and the mapping
$K(\cdot,\overline{w})\otimes f\mapsto K(\cdot,\overline{w})\otimes \Phi(\overline{w})^{*}f$ extends to a bounded operator $X\in\mathcal{L}(\mathcal{H}_{K}\otimes E_{2},\mathcal{H}_{K}\otimes E_{1})$,
then $\Phi\in\text{Mult}(\mathcal{H}_{K}\otimes E_{1},\mathcal{H}_{K}\otimes E_{2})$ and $X=M_{\Phi}^{*}.$
\end{lemma}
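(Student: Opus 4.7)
The plan is to use only the reproducing property of $K$ together with the pointwise definition $(M_\Phi F)(z)=\Phi(z)F(z)$, and to exploit that the identification $\mathcal{H}_K\otimes E$ with $E$-valued holomorphic functions gives the pairing $\langle F, K(\cdot,\overline{w})\otimes f\rangle_{\mathcal{H}_K\otimes E}=\langle F(\overline{w}),f\rangle_E$ for every kernel vector. Both directions of the lemma then reduce to a short dual calculation, with density of the span of kernel vectors guaranteeing that the identities established on elementary kernel vectors determine the operators.

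For the forward direction, I would fix $F\in\mathcal{H}_K\otimes E_1$, $w\in\mathbb{B}_m$, and $f\in E_2$, and compute
\begin{equation*}
\langle M_\Phi F, K(\cdot,\overline{w})\otimes f\rangle_{\mathcal{H}_K\otimes E_2}
=\langle (M_\Phi F)(\overline{w}),f\rangle_{E_2}
=\langle \Phi(\overline{w})F(\overline{w}),f\rangle_{E_2}
=\langle F(\overline{w}),\Phi(\overline{w})^{*}f\rangle_{E_1},
\end{equation*}
where the first equality uses the reproducing formula in $\mathcal{H}_K\otimes E_2$ and the second uses the definition of $M_\Phi$. The last quantity equals $\langle F, K(\cdot,\overline{w})\otimes \Phi(\overline{w})^{*}f\rangle_{\mathcal{H}_K\otimes E_1}$ by another application of the reproducing formula, so the adjoint identity for $M_\Phi^{*}$ on a kernel vector follows by varying $F$ arbitrarily.

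For the converse, I would start from the bounded extension $X\in\mathcal{L}(\mathcal{H}_K\otimes E_2,\mathcal{H}_K\otimes E_1)$ and pass to $X^{*}\in\mathcal{L}(\mathcal{H}_K\otimes E_1,\mathcal{H}_K\otimes E_2)$. For arbitrary $F\in\mathcal{H}_K\otimes E_1$, $w\in\mathbb{B}_m$, and $f\in E_2$, the same reproducing-kernel calculation gives
\begin{equation*}
\langle X^{*}F, K(\cdot,\overline{w})\otimes f\rangle
=\langle F, X(K(\cdot,\overline{w})\otimes f)\rangle
=\langle F, K(\cdot,\overline{w})\otimes \Phi(\overline{w})^{*}f\rangle
=\langle \Phi(\overline{w})F(\overline{w}),f\rangle,
\end{equation*}
so $(X^{*}F)(\overline{w})=\Phi(\overline{w})F(\overline{w})$ for every $w\in\mathbb{B}_m$. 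Thus the pointwise product $\Phi F$ lies in $\mathcal{H}_K\otimes E_2$ and equals $X^{*}F$, which simultaneously shows $\Phi\in\mathrm{Mult}(\mathcal{H}_K\otimes E_1,\mathcal{H}_K\otimes E_2)$ with $M_\Phi=X^{*}$, hence $X=M_\Phi^{*}$.

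There is no substantive obstacle; the only points requiring care are the identification of $\mathcal{H}_K\otimes E$ with $E$-valued holomorphic functions so that the pairing with kernel vectors is literally point evaluation, and the use of the totality of $\{K(\cdot,\overline{w})\otimes f:w\in\mathbb{B}_m,\,f\in E\}$ to ensure that the identities for $M_\Phi^{*}$ (respectively $X$) determine the operator on the whole Hilbert space. The slight notational twist of writing $K(\cdot,\overline{w})$ rather than $K(\cdot,w)$ should be kept consistent throughout so that the conjugate $\Phi(\overline{w})^{*}$ appears on the correct side.
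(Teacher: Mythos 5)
Your argument is correct. Note that the paper itself gives no proof of this lemma; it is stated as well known with a citation to Agler--McCarthy, and your reproducing-kernel duality computation is precisely the standard argument found there: pairing against kernel vectors turns both directions into the identity $\langle \Phi(\overline{w})F(\overline{w}),f\rangle_{E_2}=\langle F(\overline{w}),\Phi(\overline{w})^{*}f\rangle_{E_1}$, with totality of the kernel vectors pinning down the operator. The only point worth making explicit (which the paper also leaves tacit) is that in the forward direction the adjoint $M_{\Phi}^{*}$ exists because $M_{\Phi}$ is automatically bounded once it maps $\mathcal{H}_K\otimes E_1$ into $\mathcal{H}_K\otimes E_2$, by the closed graph theorem together with the continuity of point evaluations.
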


\begin{lemma}\label{lem3.1.1.1}
Let $\mathbf{M}^*_{z,k,E_{i}}$ be the adjoint of the multiplication
tuple $(M_{z_{1}},\cdots,M_{z_{m}})$ on $\mathcal{H}_{k}\otimes E_{i}$, $i=1,2$.
If $\Phi\in\text{Mult}(\mathcal{H}_{k}\otimes E_{1},\mathcal{H}_{k}\otimes E_{2})$, then
$$M_{\Phi}^{*}\mathbf{M}^*_{z,k,E_{2}}=\mathbf{M}^*_{z,k,E_{1}}M_{\Phi}^{*}.$$
\end{lemma}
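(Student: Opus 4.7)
The plan is to prove the intertwining at the level of the multiplication operators themselves and then pass to adjoints. As a first step, I would verify that
\[
M_{z_i,E_2}\,M_\Phi \;=\; M_\Phi\,M_{z_i,E_1}, \qquad i=1,\dots,m,
\]
as bounded operators from $\mathcal{H}_k\otimes E_1$ into $\mathcal{H}_k\otimes E_2$. This is a direct pointwise computation: for $h\in \mathcal{H}_k\otimes E_1$ and $z\in\mathbb{B}_m$,
\[
(M_{z_i,E_2}M_\Phi h)(z)=z_i\bigl(\Phi(z)h(z)\bigr)=\Phi(z)\bigl(z_i h(z)\bigr)=(M_\Phi M_{z_i,E_1}h)(z),
\]
since the scalar $z_i$ commutes through the operator value $\Phi(z)\in\mathcal{L}(E_1,E_2)$. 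Because $M_\Phi$ is bounded by hypothesis and the coordinate operators $M_{z_i,E_j}$ are trivially bounded, taking Hilbert space adjoints is legitimate and yields
\[
M_\Phi^{*}\,M_{z_i,E_2}^{*}\;=\;M_{z_i,E_1}^{*}\,M_\Phi^{*},\qquad i=1,\dots,m.
\]
Packaging these $m$ scalar identities into a single tuple identity gives the claim.

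An equivalent route, more in keeping with the reproducing kernel flavour of the paper, is to test both sides on the total family $\{K(\cdot,\overline{w})\otimes f:w\in\mathbb{B}_m,\ f\in E_2\}$, which is dense in $\mathcal{H}_K\otimes E_2$. By Lemma \ref{lem3.1}, $M_\Phi^{*}(K(\cdot,\overline{w})\otimes f)=K(\cdot,\overline{w})\otimes\Phi(\overline{w})^{*}f$, while a one-line calculation with the reproducing property shows $M_{z_i}^{*}K(\cdot,\overline{w})=w_i\,K(\cdot,\overline{w})$. Substituting these into each of the two compositions produces $K(\cdot,\overline{w})\otimes w_i\,\Phi(\overline{w})^{*}f$ in both cases, and density finishes the argument.

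The hard part is essentially nonexistent: this is a routine multiplier intertwining, and the only points that deserve a line of justification are the commutation of the scalar $z_i$ through $\Phi(z)$ (which is immediate from the definition of an operator-valued multiplier) and the boundedness of $M_\Phi$ needed to legitimise adjoints (which is exactly the assumption $\Phi\in\text{Mult}(\mathcal{H}_k\otimes E_1,\mathcal{H}_k\otimes E_2)$).
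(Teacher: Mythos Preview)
Your proof is correct; both the direct pointwise intertwining $M_{z_i,E_2}M_\Phi=M_\Phi M_{z_i,E_1}$ followed by taking adjoints, and the alternative verification on kernel vectors using Lemma~\ref{lem3.1}, are valid and standard. The paper does not actually supply its own proof of this lemma: it is listed among ``well-known'' results and stated without argument, so there is nothing to compare against beyond noting that your write-up is exactly the kind of routine justification one would expect.
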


The following lemma, due to J. A. Ball, T. T. Trent, and V. Vinnikov, characterizes $\text{Mult}(\mathcal{H}_{1}\otimes E_{1},\mathcal{H}_{1}\otimes E_{2})$ for the Drury-Arveson space $\mathcal{H}_{1}$. For the proof and additional results, see \cite{BTV2001,EP2002}:

\begin{lemma}\label{lem3.2}
Let $\Phi:\mathbb{B}_{m}\rightarrow \mathcal{L}(\mathcal{H}_{1}\otimes E_{1},\mathcal{H}_{1}\otimes E_{2})$. Then the following statements are equivalent:
\begin{itemize}
  \item [(1)]$\Phi\in\text{Mult}(\mathcal{H}_{1}\otimes E_{1},\mathcal{H}_{1}\otimes E_{2})$ with $\Vert \Phi\Vert\leq 1.$
  \item [(2)]The kernel
  $$\mathfrak{ K}_{\Phi}(z,w)=\frac{I-\Phi(z)\Phi(w)^{*}}{1-\langle z,w\rangle},$$
  is a positive, sesqui-analytic, $\mathcal{L}(E_{2})$-valued kernel on $\mathbb{B}_{m}\times\mathbb{B}_{m}$, i.e., there is an auxiliary Hilbert space $\mathcal{H}$ and a holomorphic $\mathcal{L}(\mathcal{H}, E_{2})$-valued function $\Psi$ on $\mathbb{B}_{m}$ such that for all $z,w\in\mathbb{B}_{m},$
 $$\mathfrak{ K}_{\Phi}(z,w)=\Psi(z)\Psi(w)^{*}.$$
\end{itemize}
\end{lemma}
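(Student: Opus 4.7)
The plan is to reduce both implications to a positivity identity on finite linear combinations of kernel-function vectors, and then to invoke Lemma~\ref{lem3.1} (in both its forward and converse directions) to identify the resulting bounded operator as $M_{\Phi}^{*}$.

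For $(1)\Rightarrow(2)$, I would start from the observation that $\|M_{\Phi}\|\leq 1$ is equivalent to $I-M_{\Phi}M_{\Phi}^{*}\geq 0$ on $\mathcal{H}_{1}\otimes E_{2}$. Testing this inequality on a vector $v=\sum_{i=1}^{N}K(\cdot,\overline{w_{i}})\otimes f_{i}$ with $w_i\in\mathbb{B}_m$, $f_i\in E_2$, computing $M_{\Phi}^{*}v$ via Lemma~\ref{lem3.1}, and using the reproducing identity together with $\langle\overline{w_{j}},\overline{w_{i}}\rangle=\langle w_{i},w_{j}\rangle$ yields
\[
\|v\|^{2}-\|M_{\Phi}^{*}v\|^{2}=\sum_{i,j=1}^{N}\bigl\langle\mathfrak{K}_{\Phi}(\overline{w_{j}},\overline{w_{i}})f_{i},f_{j}\bigr\rangle\geq 0.
\]
After the relabeling $\zeta_{i}:=\overline{w_{i}}$, this is precisely the positivity of the kernel $\mathfrak{K}_{\Phi}$ at arbitrary finite subsets of $\mathbb{B}_{m}$. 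To extract the sesqui-analytic factorization, I would apply a Kolmogorov-type decomposition: let $\mathcal{H}$ be the reproducing kernel Hilbert space associated with $\mathfrak{K}_{\Phi}$ and set $\Psi(w)^{*}f:=\mathfrak{K}_{\Phi}(\cdot,w)f\in\mathcal{H}$. The sesqui-analyticity of $\mathfrak{K}_{\Phi}$, inherited from the holomorphy of $\Phi$, guarantees that $w\mapsto\Psi(w)$ is holomorphic, and the reproducing property in $\mathcal{H}$ delivers $\mathfrak{K}_{\Phi}(z,w)=\Psi(z)\Psi(w)^{*}$.

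For the converse $(2)\Rightarrow(1)$, I would run the same computation in reverse. Define a linear map $X$ on the linear span of $\{K(\cdot,\overline{w})\otimes f:\,w\in\mathbb{B}_{m},\,f\in E_{2}\}$ by
\[
X\bigl(K(\cdot,\overline{w})\otimes f\bigr):=K(\cdot,\overline{w})\otimes\Phi(\overline{w})^{*}f.
\]
The identical inner-product expansion now produces $\|v\|^{2}-\|Xv\|^{2}\geq 0$ for every finite sum $v$, courtesy of the assumed positivity of $\mathfrak{K}_{\Phi}$. Hence $X$ is well defined on the span and extends by continuity to a contraction from $\mathcal{H}_{1}\otimes E_{2}$ into $\mathcal{H}_{1}\otimes E_{1}$. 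The converse half of Lemma~\ref{lem3.1} then identifies $X$ with $M_{\Phi}^{*}$, so that $\Phi\in\text{Mult}(\mathcal{H}_{1}\otimes E_{1},\mathcal{H}_{1}\otimes E_{2})$ with $\|\Phi\|=\|M_{\Phi}^{*}\|\leq 1$.

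The principal obstacle is producing a \emph{holomorphic} factor $\Psi$ in $(1)\Rightarrow(2)$ rather than merely an abstract Hilbert-space factorization; this is precisely where the sesqui-analyticity of $\mathfrak{K}_{\Phi}$ is essential, and the reproducing kernel Hilbert space model for $\mathfrak{K}_{\Phi}$ is the natural way to encode the holomorphic dependence on $w$. Once that decomposition is in hand, both directions reduce to the same kernel positivity identity applied in opposite senses, so the proof is structurally symmetric; all the non-trivial analytic content of the Drury-Arveson case is concentrated in the reproducing kernel identity $K(z,w)=(1-\langle z,w\rangle)^{-1}$, which is what converts the operator inequality $I-M_\Phi M_\Phi^*\geq 0$ into the Pick-type condition on $\mathfrak{K}_{\Phi}$.
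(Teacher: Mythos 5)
The paper does not prove this lemma at all: it is quoted as a known result and referred to the sources \cite{BTV2001,EP2002}, so there is no in-paper argument to compare against. Your proof is correct on its own merits, and it is worth noting that it is also more elementary than the full Ball--Trent--Vinnikov machinery: the equivalence as stated (contractive multiplier $\Leftrightarrow$ positivity of $K(z,w)(I-\Phi(z)\Phi(w)^{*})$) is the universal reproducing-kernel multiplier criterion, valid in any RKHS, and the only role of the Drury--Arveson space here is that its kernel $(1-\langle z,w\rangle)^{-1}$ turns that expression into $\mathfrak{K}_{\Phi}$. None of the complete Nevanlinna--Pick structure that makes the actual BTV theorem deep (transfer-function realizations, colligations, commutant lifting) is needed for this formulation, and your reduction to the positivity identity $\|v\|^{2}-\|M_{\Phi}^{*}v\|^{2}=\sum_{i,j}\langle\mathfrak{K}_{\Phi}(\overline{w_{j}},\overline{w_{i}})f_{i},f_{j}\rangle$ together with the two halves of Lemma~\ref{lem3.1} is exactly the right skeleton. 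Two points deserve the care you partially flag: in $(2)\Rightarrow(1)$ the well-definedness of $X$ should be phrased as ``a representation of $v=0$ is mapped to a vector of norm $\leq 0$,'' since one cannot speak of $\|Xv\|$ before $X$ is known to be well defined; and in $(1)\Rightarrow(2)$ the holomorphy of $\Psi$ comes from the fact that elements of the RKHS of the sesqui-analytic kernel $\mathfrak{K}_{\Phi}$ are holomorphic $E_{2}$-valued functions and $\Psi(z)$ is point evaluation, with weak holomorphy upgrading to norm holomorphy. Both are routine, so the proposal stands.
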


\subsection{Model theorem}\label{2.5}
Let $\mathbf{M}_z=(M_{z_1}, \cdots, M_{z_m})$ be the multiplication tuple on a  reproducing kernel Hilbert space $\mathcal{H}_K$ defined on $\mathbb{B}_m$ such that for every $1\leq i\leq m,$
$$(M_{z_i}f)(z)=z_i f(z),\quad f\in\mathcal{H}_K, z \in \mathbb{B}_m.$$ For an $m$-tuple of commuting operators $\mathbf{T}=(T_1, \cdots, T_m) \in \mathcal{L}(\mathcal{H})^m$ and a multi-index $\alpha=(\alpha_1, \cdots, \alpha_m) \in \mathbb{N}^m_0$, let $\mathbf{T}^{\alpha}=T_1^{\alpha_1}\cdots T_m^{\alpha_m}$ and $\mathbf{T}^*=(T_1^*, \cdots, T^*_m)$. Suppose that $1/K$ is a polynomial and that $\frac{1}{K}(\mathbf{T}^{*}, \mathbf{T})\geq0$, where given a polynomial $p(z, \omega)=\sum\limits_{I,J\leq \beta}\alpha_{I,J}z^{I}\omega^{J},\beta\in \mathbb{N}_{0}^m$, we let
$p(\mathbf{T}^{*}, \mathbf{T})=\sum\limits_{I,J\leq \beta}\alpha_{I,J}\mathbf{T}^{*I}\mathbf{T}^{J}$. The defect operator $\mathcal{D}_{\mathbf{T}}$ of $\mathbf{T}$ is then defined to be
$$
\mathcal{D}_{\mathbf{T}}=\frac{1}{K}(\mathbf{T}^*,\mathbf{T})^{\frac{1}{2}}.
$$

We next define a mapping $V: \mathcal{H}\rightarrow \mathcal{N}\subset\mathcal{H}_K\otimes \mathcal{H}$ as $$Vh=\sum\limits_i \mathbf{e}_i(\cdot)\otimes \mathcal{D}_{\mathbf{T}}\mathbf{e}_i(\mathbf{T}^{*})^{*}h,$$ for $h\in\mathcal{H}$, where $\mathcal{N}=\overline{ran\,V}$ and $\{\mathbf{e}_i\}_{i=0}^{\infty}$ is an orthonormal basis for $\mathcal{H}_K$.
Then according to the result of C. G. Ambrozie, M. Engli$\check{s}$, and V. M$\ddot{u}$ller in \cite{AEM}, $V$ is a unitary operator satisfying $VT_j=M_{z_j}^*V$ for $1\leq j\leq m$.
The study of a model theorem for bounded linear operators have been quite extensive and can be found in \cite{Agler1982, A2, AEM, Athavale1978, Athavale1992,CV1993,CV1995,MV1993,Pott1999}. The following model theorem for a tuple of commuting operators is stated in \cite{AEM}:

\begin{theorem}\label{11.19}
Consider the operator tuple $\mathbf{M}_z=(M_{z_1},\cdots,M_{z_m})$ on a Hilbert space $\mathcal{H}_K$ of holomorphic functions with reproducing kernel $K$ such that $1/K$ is a polynomial. For an orthonormal basis  $\{\mathbf{e}_i\}_{i=0}^\infty$ for $\mathcal{H}_K$, let $f_{j}(z,
w)=\sum\limits_{i=j}^{\infty}\mathbf{e}_{i}(z)\frac{1}{K}(z,w)\mathbf{e}_{i}(w)^*$. Then the following statements are equivalent:
\begin{itemize}
  \item [(1)] $\mathbf{T}=(T_1, \cdots, T_m) \in \mathcal{L}(\mathcal{H})^m$ is unitarily equivalent to the restriction of $\mathbf{M}_{z}^{*}$ to an invariant subspace.
  \item [(2)] $\frac{1}{K}(\mathbf{T}^{*}, \mathbf{T})\geq0$ and $\lim\limits_{j}f_{j}(\mathbf{T}^{*}, \mathbf{T})h=0$ for $h\in \mathcal{H}$.
\end{itemize}
\end{theorem}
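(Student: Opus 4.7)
The plan is to prove both implications via the model map $V\colon \mathcal{H}\to \mathcal{H}_K \otimes \mathcal{H}$ defined just before the theorem, which is set up so that the hypotheses in (2) are exactly what ensures $V$ is well-defined, isometric, and intertwines $\mathbf{T}$ with the restriction of $\mathbf{M}_z^*$ to $\operatorname{ran} V$.

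For the direction (2) $\Rightarrow$ (1), the positivity $\frac{1}{K}(\mathbf{T}^*,\mathbf{T})\ge 0$ legitimizes the square root $\mathcal{D}_{\mathbf{T}}$. The central identity to establish is
\begin{equation*}
\sum_{i=0}^{j-1} \bigl\|\mathcal{D}_{\mathbf{T}}\,\mathbf{e}_i(\mathbf{T}^*)^* h\bigr\|^2 \;=\; \langle h, h\rangle \;-\; \langle f_j(\mathbf{T}^*, \mathbf{T}) h, h\rangle,
\end{equation*}
which stems from the formal reproducing-kernel identity $K(z,w)\cdot \tfrac{1}{K}(z,w)=1$ expanded against the orthonormal basis $\{\mathbf{e}_i\}$: the partial sum $\sum_{i=0}^{j-1}\mathbf{e}_i(z)\tfrac{1}{K}(z,w)\mathbf{e}_i(w)^*$ equals $1-f_j(z,w)$, and substituting $\mathbf{T}^*$ and $\mathbf{T}$ into this polynomial identity and pairing with $h$ yields the displayed equation. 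Letting $j\to\infty$ and invoking $f_j(\mathbf{T}^*,\mathbf{T})h\to 0$ produces $\|Vh\|^2=\|h\|^2$, so $V$ is an isometry. For the intertwining $VT_j=M_{z_j}^*V$, first verify $V^*(\mathbf{e}_i\otimes g)=\mathbf{e}_i(\mathbf{T}^*)\mathcal{D}_{\mathbf{T}}g$; then, writing $M_{z_j}\mathbf{e}_i=\sum_k b^{(j)}_{ki}\mathbf{e}_k$, the polynomial identity $z_j\mathbf{e}_i(z)=\sum_k b^{(j)}_{ki}\mathbf{e}_k(z)$ evaluated at $\mathbf{T}^*$ gives $T_j^*\mathbf{e}_i(\mathbf{T}^*)=\sum_k b^{(j)}_{ki}\mathbf{e}_k(\mathbf{T}^*)$, i.e.\ $V^*M_{z_j}=T_j^*V^*$; taking adjoints yields $VT_j=M_{z_j}^*V$, so $\operatorname{ran}V$ is $\mathbf{M}_z^*$-invariant and $\mathbf{T}$ is unitarily equivalent to $\mathbf{M}_z^*|_{\operatorname{ran}V}$.

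For (1) $\Rightarrow$ (2), it suffices to verify both conditions for $\mathbf{M}_z^*$ itself and check that they are inherited under restriction to invariant subspaces. A direct reproducing-kernel computation gives $\frac{1}{K}(\mathbf{M}_z,\mathbf{M}_z^*)K_w= \mathbf{1}$, identifying $\frac{1}{K}(\mathbf{M}_z,\mathbf{M}_z^*)$ as the rank-one orthogonal projection onto the constant function, hence nonnegative. A parallel calculation shows $f_j(\mathbf{M}_z,\mathbf{M}_z^*)$ equals the orthogonal projection onto $\overline{\operatorname{span}}\{\mathbf{e}_i\}_{i\ge j}$, which converges strongly to zero since $\{\mathbf{e}_i\}$ is an orthonormal basis. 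Finally, because every monomial in $\tfrac{1}{K}$ places all factors of $\mathbf{T}^*$ to the left of all factors of $\mathbf{T}$, for any $\mathbf{M}_z^*$-invariant $\mathcal{M}$ and $h\in\mathcal{M}$ one has $\tfrac{1}{K}(\mathbf{T}^*,\mathbf{T})h=P_{\mathcal{M}}\tfrac{1}{K}(\mathbf{M}_z,\mathbf{M}_z^*)h$, and similarly for $f_j$, so both positivity and strong convergence to zero pass to the restriction.

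The main technical obstacle will be formalizing the operator-valued functional calculus $\mathbf{e}_i(\mathbf{T}^*)$ and justifying the termwise identification of $1-f_j$ with $\sum_{i<j}\mathbf{e}_i(z)\tfrac{1}{K}(z,w)\mathbf{e}_i(w)^*$ at the operator level. This is handled by first selecting a graded polynomial orthonormal basis of $\mathcal{H}_K$ (which exists because polynomials are dense and homogeneous polynomials of different degrees are orthogonal under the kernels $1/(1-\langle z,w\rangle)^k$), so that each $\mathbf{e}_i(\mathbf{T}^*)$ is a bounded polynomial in $\mathbf{T}^*$, and then controlling the $j\to\infty$ limit via the monotonicity of partial sums once positivity of $\frac{1}{K}(\mathbf{T}^*,\mathbf{T})$ is in hand.
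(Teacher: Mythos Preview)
The paper does not supply its own proof of this theorem: it is quoted as a known model theorem and attributed to Ambrozie, Engli\v{s}, and M\"{u}ller \cite{AEM} (see the sentence ``The following model theorem for a tuple of commuting operators is stated in \cite{AEM}''). So there is nothing in the paper to compare your argument against.

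That said, your sketch is essentially the standard proof from \cite{AEM}. The isometry identity $\sum_{i<j}\|\mathcal{D}_{\mathbf{T}}\mathbf{e}_i(\mathbf{T}^*)^*h\|^2=\|h\|^2-\langle f_j(\mathbf{T}^*,\mathbf{T})h,h\rangle$ is exactly the mechanism used there, and the intertwining $VT_j=M_{z_j}^*V$ is obtained in the same way. One small caution: your claim in the $(1)\Rightarrow(2)$ direction that $\tfrac{1}{K}(\mathbf{M}_z,\mathbf{M}_z^*)$ is the rank-one projection onto constants is correct for the kernels $K(z,w)=(1-\langle z,w\rangle)^{-k}$ used throughout the paper, but for a general $K$ with $1/K$ a polynomial you only get from $\tfrac{1}{K}(\mathbf{M}_z,\mathbf{M}_z^*)K_w=1$ that the defect has range in the constants; you should also check $K(\cdot,0)=1$ (equivalently $\tfrac{1}{K}(\cdot,0)=1$) to conclude it fixes the constant function and is therefore an honest projection. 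This normalization is part of the standing hypotheses in \cite{AEM}.
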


\section{Similarity in the class $\mathcal{B}_{n}^{m}(\Omega)$}\label{sec3}

We first give a sufficient condition for the similarity between operator tuples $\mathbf{T} \in \mathcal{B}_{1}^{m}(\mathbb{B}_{m})$ and $\mathbf{M}_z^*$ on a weighted Bergman space by using the defect operator $\mathcal{D}_{\mathbf{T}}$ and the model theorem given previously. We then introduce condition $(\mathbf{C})$ for Hermitian holomorphic vector bundles (see Subsection 3.2) and use it together with the curvature and its covariant derivatives to characterize similarity in the class $\mathcal{B}_{n}^{m}(\Omega)$. The similarity of operators inside a specific subclass of $\mathcal{B}_{n}^{m}(\Omega)$ and the uniqueness of decomposition of Cowen-Douglas operators are then discussed.
\subsection{Model theorem and similarity}
We start by investigating the eigenvector bundle $\mathcal{E}_{\mathbf{T}}$ of $\mathbf{T} \in \mathcal{B}_1^m(\Omega)$.

\begin{lemma}\label{2112.14}
Let $\textbf{T}=(T_{1},\cdots,T_{m})\in \mathcal{B}^m_1(\Omega) \subset \mathcal{L}(\mathcal{H})^m$ and consider the operator tuple $\mathbf{M}_z=(M_{z_1},\cdots,M_{z_m})$ on a Hilbert space $\mathcal{H}_K$ of holomorphic functions with reproducing kernel $K$ such that $1/K$ is a polynomial. Suppose that $\mathbf{T}$ satisfies either one of the equivalent statements in Theorem \ref{11.19}. Then for any $t(w) \in \ker(\mathbf{T}-w),$ $$\|t(w)\|^2=K(\overline{w},\overline{w})\|\mathcal{D}_{\mathbf{T}}t(w)\|^2.$$

\end{lemma}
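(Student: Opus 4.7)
The plan is to compute $Vt(w)$ explicitly, where $V\colon\mathcal{H}\to\mathcal{N}\subset\mathcal{H}_K\otimes\mathcal{H}$ is the isometry supplied by the model theorem (Theorem \ref{11.19}) under the standing hypotheses on $\mathbf{T}$. Since $V$ is in particular isometric, once $Vt(w)$ is identified the desired norm identity will follow immediately from
$$\|t(w)\|^{2}=\|Vt(w)\|^{2}.$$

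The key observation is that because $t(w)\in\ker(\mathbf{T}-w)$, each $T_{j}$ acts on $t(w)$ as scalar multiplication by $w_{j}$, and hence $\mathbf{T}^{\alpha}t(w)=w^{\alpha}t(w)$ for every multi-index $\alpha$. Writing the orthonormal basis functions of $\mathcal{H}_{K}$ as Taylor series $\mathbf{e}_{i}(z)=\sum_{\alpha}c_{i,\alpha}z^{\alpha}$ and using the commutativity of $\mathbf{T}$, one has $\mathbf{e}_{i}(\mathbf{T}^{*})^{*}=\sum_{\alpha}\overline{c_{i,\alpha}}\mathbf{T}^{\alpha}$, which applied to the eigenvector gives
$$\mathbf{e}_{i}(\mathbf{T}^{*})^{*}t(w)=\sum_{\alpha}\overline{c_{i,\alpha}}\,w^{\alpha}t(w)=\overline{\mathbf{e}_{i}(\overline{w})}\,t(w).$$
Inserting this into the definition of $V$ and pulling the scalar across the tensor product yields
$$Vt(w)=\sum_{i}\mathbf{e}_{i}(\cdot)\otimes\mathcal{D}_{\mathbf{T}}\,\overline{\mathbf{e}_{i}(\overline{w})}\,t(w)=\left(\sum_{i}\overline{\mathbf{e}_{i}(\overline{w})}\,\mathbf{e}_{i}(\cdot)\right)\otimes\mathcal{D}_{\mathbf{T}}t(w).$$
Since $\{\mathbf{e}_{i}\}$ is an orthonormal basis for $\mathcal{H}_{K}$, the familiar expansion $K(z,\lambda)=\sum_{i}\mathbf{e}_{i}(z)\overline{\mathbf{e}_{i}(\lambda)}$ identifies the parenthesized sum with $K(\cdot,\overline{w})$.

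Consequently $Vt(w)=K(\cdot,\overline{w})\otimes\mathcal{D}_{\mathbf{T}}t(w)$, and taking norms in $\mathcal{H}_{K}\otimes\mathcal{H}$ gives
$$\|t(w)\|^{2}=\|K(\cdot,\overline{w})\|_{\mathcal{H}_{K}}^{2}\,\|\mathcal{D}_{\mathbf{T}}t(w)\|^{2}=K(\overline{w},\overline{w})\,\|\mathcal{D}_{\mathbf{T}}t(w)\|^{2},$$
where the last equality is the reproducing property. There is no substantial obstacle here; the only points requiring a little care are the term-by-term application of the holomorphic functional calculus of $\mathbf{T}^{*}$ on the eigenvector $t(w)$ and the convergence of the resulting series, both of which are justified by the convergence of $\mathbf{e}_{i}$ at $\overline{w}\in\mathbb{B}_{m}$ (equivalently, the membership of $K(\cdot,\overline{w})$ in $\mathcal{H}_{K}$) together with the commutativity of the tuple $\mathbf{T}$. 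The argument is in essence a direct computation leveraging the model unitary $V$.
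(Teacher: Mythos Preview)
Your proof is correct and follows essentially the same route as the paper's: both compute $Vt(w)$ via the model-theoretic map $V$, obtain $Vt(w)=K(\cdot,\overline{w})\otimes\mathcal{D}_{\mathbf{T}}t(w)$, and invoke the isometric property of $V$. The only difference is cosmetic: where the paper writes the one-line functional-calculus identity $\mathbf{e}_{i}(\mathbf{T}^{*})^{*}t(w)=\mathbf{e}_{i}(\overline{w})^{*}t(w)$, you spell it out by expanding $\mathbf{e}_{i}$ in a Taylor series and applying $\mathbf{T}^{\alpha}t(w)=w^{\alpha}t(w)$ term by term.
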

\begin{proof}
Let $\{\mathbf{e}_i\}_{i=0}^\infty$ be an orthonormal basis for $\mathcal{H}_K$.
Since $t(w)\in\ker(\mathbf{T}-w)$, $f(\mathbf{T})t(w)=f(w)t(w)$ for every $f \in \mathcal{O}(\Omega)$.
Defining a mapping $V: \mathcal{H}\rightarrow \mathcal{N}\subset\mathcal{H}_K\otimes \mathcal{H}$ as $$Vh=\sum\limits_i \mathbf{e}_i(\cdot)\otimes \mathcal{D}_{\mathbf{T}}\mathbf{e}_i(\mathbf{T}^{*})^*h,$$ for $h\in\mathcal{H}$ and $\mathcal{N}=\overline{\text{ran}\,V}$, we have
\begin{eqnarray*}
Vt(w)&=&\sum\limits_i \mathbf{e}_i(\cdot)\otimes \mathcal{D}_{\mathbf{T}}\mathbf{e}_i(\mathbf{T}^{*})^{*}t(w)
=\sum\limits_i \mathbf{e}_i(\cdot)\otimes \mathcal{D}_{\mathbf{T}}\mathbf{e}_i(\overline{w})^{*}t(w) \\ [4pt] \nonumber
&=&\sum\limits_i \mathbf{e}_i(\cdot)\mathbf{e}_i(\overline{w})^{*}\otimes \mathcal{D}_{\mathbf{T}}t(w)
=K(\cdot,\overline{w})\otimes \mathcal{D}_{\mathbf{T}}t(w). \nonumber
\end{eqnarray*}
From \cite{AEM}, $V$ is unitary and the result follows.
\end{proof}
The next theorem is the first of our main results of the paper.
\begin{theorem}\label{thm3.1}
Let $\mathbf{T}=(T_{1},\cdots,T_{m})\in \mathcal{B}_{1}^{m}(\mathbb{B}_{m}) \subset \mathcal{L}(\mathcal{H})^m$ and consider the operator tuple $\mathbf{M}_z^*=(M_{z_1}^*,\cdots,M_{z_m}^*)$ on a weighted Bergman space $\mathcal{H}_{k}$, where $k > m+1.$
Suppose that $(I-\sum\limits_{i=1}^{m}T_{i}^{*}T_{i})^{k}\geq0$  and $\lim\limits_{j}f_{j}(\mathbf{T}^{*}, \mathbf{T})h=0, h\in \mathcal{H}$, where $f_j(z,w)=\sum\limits_{i=j}^{\infty}\mathbf{e}_{i}(z)(1-\langle z, w\rangle)^{k}\mathbf{e}_{i}(w)^*$, for an orthonormal basis  $\{\mathbf{e}_i\}_{i=0}^{\infty}$ for $\mathcal{H}_{k}$.
If there exist a non-vanishing holomorphic section $t$ of $\mathcal{E}_\textbf{T}$ and a unit vector $\zeta_0 \in \overline{\text{ran } \mathcal{D}_{\mathbf{T}}}$ such that
\begin{equation}\label{4}
\sup\limits_{w\in\mathbb{B}_m}\frac{\Vert \mathcal{D}_{\mathbf{T}}t(w)\Vert^{2}}{\lvert\langle \mathcal{D}_{\mathbf{T}}t(w),\zeta_{0}\rangle\rvert^{2}}<\infty,
\end{equation}
then $\mathbf{T} \sim_{s} \mathbf{M}_z^{*}$.
\end{theorem}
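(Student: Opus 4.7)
The plan is to combine the model theorem (Theorem \ref{11.19}) with a pair of operator-valued multipliers, so that $\mathbf{T}$ -- which the hypotheses force to be unitarily equivalent to the restriction of $\mathbf{M}_z^*$ on $\mathcal{H}_k\otimes\mathcal{H}$ to the model space $\mathcal{N}=\overline{\text{ran}\,V}$ -- becomes visibly similar to $\mathbf{M}_z^*$ acting on the full space $\mathcal{H}_k$. The key scalar in the argument is
\[
\phi(w):=\langle \mathcal{D}_{\mathbf{T}}t(w),\zeta_0\rangle,
\]
which is holomorphic in $w$ and forced to be non-vanishing on $\mathbb{B}_m$: the supremum condition (\ref{4}), together with the identity $\|t(w)\|^2=K(\overline{w},\overline{w})\|\mathcal{D}_{\mathbf{T}}t(w)\|^2$ from Lemma \ref{2112.14} and the non-vanishing of the section $t$, prevents both $\mathcal{D}_{\mathbf{T}}t(w)$ and $\phi(w)$ from vanishing.

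Next I would define the holomorphic vector-valued function $q(w):=\mathcal{D}_{\mathbf{T}}t(w)/\phi(w)\in\mathcal{H}$ and introduce the multiplier $\Phi:\mathbb{B}_m\to\mathcal{L}(\mathcal{H},\mathbb{C})$ characterized by $\Phi(\overline{w})^*1=q(w)$, or equivalently $\Phi(z)(h)=\langle h,q(\overline{z})\rangle$. The hypothesis $\sup_w\|\mathcal{D}_{\mathbf{T}}t(w)\|^2/|\phi(w)|^2<\infty$ is precisely what bounds $\|\Phi(z)\|=\|q(\overline{z})\|$ uniformly in $z$, so $\Phi$ belongs to $\text{Mult}(\mathcal{H}_k\otimes\mathcal{H},\mathcal{H}_k)=H^{\infty}_{\mathcal{H}\to\mathbb{C}}(\mathbb{B}_m)$ by the identification available for weighted Bergman spaces with $k>m+1$ (the first lemma of Subsection \ref{subsec2.2}). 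A direct computation using Lemma \ref{lem3.1} together with Lemma \ref{2112.14} yields
\[
M_\Phi^* K(\cdot,\overline{w})=K(\cdot,\overline{w})\otimes q(w)=\phi(w)^{-1}\,Vt(w)\in\mathcal{N},
\]
so that $M_\Phi^*$ sends every reproducing kernel into $\mathcal{N}$ and, by continuity and density of the span of kernels, the whole of $\mathcal{H}_k$ into $\mathcal{N}$.

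To establish that $M_\Phi^*$ is actually a bounded bijection onto $\mathcal{N}$, I would introduce the constant companion multiplier $\Psi\in\text{Mult}(\mathcal{H}_k,\mathcal{H}_k\otimes\mathcal{H})$ given by $\Psi(z)c=c\zeta_0$ and verify that $M_\Psi^* M_\Phi^* K(\cdot,\overline{w})=K(\cdot,\overline{w})\,\langle q(w),\zeta_0\rangle =K(\cdot,\overline{w})$; by density of reproducing kernels this gives $M_\Psi^* M_\Phi^*=I$ on $\mathcal{H}_k$, so $M_\Phi^*$ is bounded below with closed range. Its image is also dense in $\mathcal{N}$, because $\{\phi(w)^{-1}Vt(w):w\in\mathbb{B}_m\}$ shares its span with $\{Vt(w)\}$ and $\{t(w)\}$ is total in $\mathcal{H}$ by the Cowen-Douglas defining property (3). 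Hence $M_\Phi^*:\mathcal{H}_k\to\mathcal{N}$ is invertible, and Lemma \ref{lem3.1.1.1} together with $V\mathbf{T}=\mathbf{M}_z^*|_{\mathcal{N}}V$ shows that $Y:=V^{-1}M_\Phi^*$ satisfies $Y\mathbf{M}_z^*=\mathbf{T}Y$ and implements the desired similarity. The principal obstacle is the setup step: one must recognize that the correct intertwiner comes from an operator-valued multiplier and that (\ref{4}) is calibrated exactly to force $\Phi$ into the $H^\infty$ multiplier algebra of the weighted Bergman space; once this is in place, the companion multiplier $\Psi$ built from $\zeta_0$ delivers left invertibility essentially for free, and the Cowen-Douglas spanning property supplies the matching dense range.
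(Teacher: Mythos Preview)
Your argument is correct and follows a genuinely different route from the paper's. Both proofs start from the model theorem to realize $\mathbf{T}\sim_u \mathbf{M}_{z,E}^*|_{\mathcal{N}}$ and both build an $H^\infty_{E\to\mathbb{C}}$-multiplier out of $\mathcal{D}_{\mathbf{T}}t(w)/\langle\mathcal{D}_{\mathbf{T}}t(w),\zeta_0\rangle$, using condition~(\ref{4}) precisely to force boundedness. The divergence is in how similarity is then extracted. The paper splits $\mathcal{D}_{\mathbf{T}}t(w)=\eta(w)+\psi(w)\zeta_0$, forms the multiplier $F$ with $F^*(\overline z)\lambda=\lambda\,\eta(z)/\psi(z)$, and shows by a direct computation that the curvature of $\mathbf{M}_{z,E}^*|_{\mathcal{N}}$ equals the curvature of $Y\mathbf{M}_z^*Y^{-1}$ with $Y=(I+M_FM_F^*)^{1/2}$; the conclusion then rests on the Cowen--Douglas theorem that curvature is a complete unitary invariant in $\mathcal{B}_1^m$. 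You instead take the full quotient $q(w)=\mathcal{D}_{\mathbf{T}}t(w)/\phi(w)$, observe that $M_\Phi^*K(\cdot,\overline w)=\phi(w)^{-1}Vt(w)$ lands in $\mathcal{N}$, and produce an explicit bounded inverse by pairing with the constant multiplier $\Psi(z)c=c\zeta_0$ (so $M_\Psi^*M_\Phi^*=I$) together with the Cowen--Douglas density axiom for surjectivity. Your route is more elementary---it bypasses the curvature calculation and the unitary-invariance theorem entirely, and yields the intertwiner $V^{-1}M_\Phi^*$ directly---while the paper's route keeps the argument in the curvature language that the rest of the article is built around and produces a positive self-adjoint similarity $Y$.
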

\begin{proof}
Let $E=\overline{\text{ran } \mathcal{D}_{\mathbf{T}}}$ and note by Theorem \ref{11.19} that $\mathbf{T}\sim_{u}\mathbf{M}_{z,E}^*\vert_{\mathcal{N}}$, where $\mathcal{N}$ is an invariant subspace of $\mathbf{M}_{z,E}^*$. By Lemma \ref{2112.14}, we then have  $$\ker(\mathbf{M}_{z,E}^*\vert_{\mathcal{N}}-w)=\bigvee\{K(\cdot,\overline{w})\otimes\mathcal{D}_{\mathbf{T}}t(w):\mathcal{D}_{\mathbf{T}}t(w)\in E\}.$$
Given a unit vector $\zeta_{0} \in E$, one can select an orthonormal basis $\{\zeta_{\alpha}\}_{\alpha\geq0}$ of $E$ to express $\mathcal{D}_{\mathbf{T}}t(w)$ as $\mathcal{D}_{\mathbf{T}}t(w)=\sum\limits_{\alpha \geq 0}\langle \mathcal{D}_{\mathbf{T}}t(w),\zeta_{\alpha}\rangle\zeta_{\alpha}.$
If we set
$$\eta(w):=\mathcal{D}_{\mathbf{T}}t(w)-\langle \mathcal{D}_{\mathbf{T}}t(w),\zeta_{0}\rangle\zeta_{0} \in \mathcal{O}(\mathbb{B}_{m})\quad\text{and}\quad \psi(w):=\langle\mathcal{D}_{\mathbf{T}}t(w),\zeta_{0}\rangle\in \mathcal{O}(\mathbb{B}_{m}),$$
then $\Vert \mathcal{D}_{\mathbf{T}}t(w)\Vert^{2}=\Vert \eta(w)\Vert^{2}+\vert\psi(w)\vert^{2}$.

Now define an operator-valued function $F\in H^{\infty}_{E\rightarrow \mathbb{C}}(\mathbb{B}_{m})$ by
$$F^*(\overline{z})(\lambda):=\lambda\frac{ \eta(z)}{\psi(z)},\quad  \lambda\in\mathbb{C},z\in\mathbb{B}_{m}.$$
Since we know that for a weighted Bergman space $\mathcal{H}_{k}$ with $k > m+1,$
$$\text{Mult}(\mathcal{H}_{k}\otimes E,\mathcal{H}_{k}\otimes \mathbb{C})=H_{E\rightarrow \mathbb{C}}^{\infty}(\mathbb{B}_{m}),$$
Lemma \ref{lem3.1} and condition (\ref{4}) yield
$$M_{F}^{*}(K(\cdot,\overline{w})\otimes 1)=K(\cdot,\overline{w})\otimes F(\overline{w})^{*}(1)=K(\cdot,\overline{w})\otimes \frac{\eta(w)}{\psi(w)},\quad w\in\mathbb{B}_{m}.$$
Hence, $M_{F}^{*}$ is bounded and for $w \in \mathbb{B}_{m}$,
\begin{eqnarray*}
\Vert \mathcal{D}_{\mathbf{T}}t(w)\Vert^{2}&=&\Vert \eta(w)\Vert^{2}+\vert\psi(w)\vert^{2}\\[4pt]\nonumber
&=&\vert\psi(w)\vert^{2}\left(\Big{\Vert}\frac{ \eta(w)}{\psi(w)}\Big{\Vert}^{2}+1\right)\\[4pt]\nonumber
&=&\vert\psi(w)\vert^{2}\left(\frac{\Vert M_{F}^{*}(K(\cdot,\overline{w})\otimes 1)\Vert^{2}}{\Vert K(\cdot,\overline{w})\Vert^{2}}+1\right).\nonumber
\end{eqnarray*}
Since $\psi(w) \in \mathcal{O}(\mathbb{B}_{m})$, the definition of curvature from (\ref{curvature}) then gives
\begin{eqnarray*}
\mathcal{K}_{\mathbf{M}_{z,E}^*\vert_{\mathcal{N}}}(w)&=&-\sum^{m}_{i,j=1}\frac{\partial ^{2}\log \Vert K(\cdot,\overline{w})\otimes\mathcal{D}_{\mathbf{T}}t(w)\Vert^{2}}{\partial w_{i}\partial\overline{w}_{j}}dw_{i}\wedge d\overline{w}_{j}\\[4pt]\nonumber
&=&-\sum^{m}_{i,j=1}\frac{\partial ^{2}\log\Vert \mathcal{D}_{\mathbf{T}}t(w)\Vert^{2}}{\partial w_{i}\partial\overline{w}_{j}}dw_{i}\wedge d\overline{w}_{j}-\sum^{m}_{i,j=1}\frac{\partial ^{2}\log\Vert K(\cdot,\overline{w})\Vert^{2}}{\partial w_{i}\partial\overline{w}_{j}}dw_{i}\wedge d\overline{w}_{j}\\[4pt]\nonumber
&=&-\sum^{m}_{i,j=1}\frac{\partial ^{2}\log(\Vert M_{F}^{*}(K(\cdot,\overline{w})\otimes 1)\Vert^{2}+\Vert K(\cdot,\overline{w})\Vert^{2})}{\partial w_{i}\partial\overline{w}_{j}}dw_{i}\wedge d\overline{w}_{j}\\[4pt]\nonumber
&=&-\sum^{m}_{i,j=1}\frac{\partial ^{2}\log\langle(I+M_{F}M_{F}^{*})K(\cdot,\overline{w}),K(\cdot,\overline{w})\rangle}{\partial w_{i}\partial\overline{w}_{j}}dw_{i}\wedge d\overline{w}_{j}\\[4pt]\nonumber
&=&-\sum^{m}_{i,j=1}\frac{\partial ^{2}\log\Vert(I+M_{F}M_{F}^{*})^{\frac{1}{2}}K(\cdot,\overline{w})\Vert^{2}}{\partial w_{i}\partial\overline{w}_{j}}dw_{i}\wedge d\overline{w}_{j}.\nonumber
\end{eqnarray*}

Finally, let $$Y:=(I+M_{F}M_{F}^{*})^{\frac{1}{2}}.$$ Obviously,
$0\notin\sigma(Y)$ and
$YK(\cdot,\overline{w})\in \ker(Y\mathbf{M}_{z}^{*}Y^{-1}-w)$
so that for any $w\in\mathbb{B}_{m},$
$$\mathcal{K}_{\mathbf{M}_{z,E}^*\vert_{\mathcal{N}}}(w)=-\sum^{m}_{i,j=1}\frac{\partial ^{2}\log\Vert(I+M_{F}M_{F}^{*})^{\frac{1}{2}}K(\cdot,\overline{w})\Vert^{2}}{\partial w_{i}\partial\overline{w}_{j}}dw_{i}\wedge d\overline{w}_{j}=\mathcal{K}_{Y\mathbf{M}_{z}^{*}Y^{-1}}(w).$$
This shows that $\mathbf{M}_{z,E}^*\vert_{\mathcal{N}} \sim_{u} Y\mathbf{M}_{z}^{*}Y^{-1}$ and since $\mathbf{T}\sim_{u}\mathbf{M}_{z,E}^*\vert_{\mathcal{N}}$, $\mathbf{T}\sim_{s}\mathbf{M}_{z}^{*}$ as claimed.
\end{proof}

\begin{remark}
After an obvious modification of the condition $(I-\sum_{i=1}^m T^*_iT_i)^k \geq 0$ and the form of $f_j$ tailored to the reproducing kernel $K$, Theorem \ref{thm3.1} can be generalized to any operator tuple $\mathbf{M}_z^*$ on a reproducing kernel Hilbert space $\mathcal{H}_K$ such that $1/K$ is a polynomial as long as $\text{Mult}(\mathcal{H}_K)=H^{\infty}(\mathbb{B}_m)$. Moreover, one can use Lemma \ref{lem3.2} to check the multiplier algebra condition when working on the similarity between a row-contraction $\mathbf{T}\in \mathcal{B}_{1}^m(\mathbb{B}_m)$ and the operator tuple $\mathbf{M}_z^*=(M_{z_1}^*,\cdots,M_{z_m}^*)$ on the Drury-Arveson space $\mathcal{H}_{1}$.
\end{remark}

\subsection{Complex bundles and similarity}
Denote by $\{\sigma_{i}\}_{i=1}^{n}$ an orthonormal basis for $\mathbb{C}^{n}$ and let a Hilbert space $\mathcal{H}$ on $\Omega$ and  analytic vector valued functions $\{f_{i}\}_{i=1}^{n}$ over $\Omega$ be given, where $\Omega \subset \mathbb{C}^m$. Let $\mathcal{E}$ be an $n$-dimensional Hermitian holomorphic vector bundle over $\Omega$, $f_1,\ldots,f_n$ be $n$ holomorphic cross-sections of $\mathcal{E}$ which form a frame for $\mathcal{E}$ on $\Omega$.
For $w \in \Omega$, set $\mathcal{E}(w)=\bigvee\{f_1(w),\ldots, f_n(w)\}$ and $E=\bigvee_{w\in\Omega}\{f_1(w),\ldots, f_n(w)\}$. We will say that \emph{condition $(\mathbf{C})$ holds} for the Hermitian holomorphic vector bundle $\mathcal{E}$ via $\mathcal{H}$ if there exist functions $F\in H^{\infty}_{\mathbb{C}^{n}\rightarrow E}(\Omega)$ and $G\in H^{\infty}_{E\rightarrow\mathbb{C}^{n}}(\Omega)$ such that
$F^{\#}(\overline{w})(\sigma_{i}):=F(w)(\sigma_i)=f_{i}(w)$, $G^{\#}(\overline{w})(f_i(w)):=G(w)(f_i(w))$, $(F^{\#})^* \in \text{Mult}(\mathcal{H} \otimes E, \mathcal{H} \otimes \mathbb{C}^n)$, $(G^{\#})^* \in \text{Mult}(\mathcal{H} \otimes \mathbb{C}^n, \mathcal{H} \otimes E)$, and  $G^{\#}(\overline{w})F^{\#}(\overline{w})\equiv I$ for all $w\in\Omega.$ Using the curvature and covariant derivatives of complex bundles as well as condition $(\mathbf{C})$, we give a similarity description in the class $\mathcal{B}_{n}^{m}(\mathbb{B}_m)$.

\begin{lemma}\label{lem3.1.1}
Let $\mathcal{E}_1$ and $\mathcal{E}_2$ be Hermitian holomorphic bundles over $\Omega\subset\mathbb{C}^{m}$ of rank $n_1$ and of $n_2$, respectively. Then for any $I,J \in\mathbb{N}_{0}^{m}$,
$${K}_{\mathcal{E}_1\otimes \mathcal{E}_2, w^I\overline{w}^J}={K}_{\mathcal{E}_1, w^I\overline{w}^J}\otimes I_{n_2}+I_{n_1} \otimes {K}_{\mathcal{E}_2,w^I\overline{w}^J}.$$
\end{lemma}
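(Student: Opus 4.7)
The plan is to reduce the identity for general multi-indices $I,J$ to the base case $I=J=0$, which is the classical Leibniz formula for the curvature of a tensor product of Hermitian holomorphic bundles. Once the pointwise identity $K_{\mathcal{E}_1 \otimes \mathcal{E}_2}^{i,j} = K_{\mathcal{E}_1}^{i,j} \otimes I_{n_2} + I_{n_1} \otimes K_{\mathcal{E}_2}^{i,j}$ is established, applying $\partial^{|I|+|J|}/\partial w^I \partial \overline{w}^J$ to both sides is a linear operation that passes through the tensor product, giving the asserted formula for all $I,J \in \mathbb{N}_0^m$.

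For the base case, I would pick holomorphic frames $\sigma = \{\sigma_1,\ldots,\sigma_{n_1}\}$ of $\mathcal{E}_1$ and $\tau = \{\tau_1,\ldots,\tau_{n_2}\}$ of $\mathcal{E}_2$, with Gram matrices $h_1$ and $h_2$. Then $\{\sigma_i \otimes \tau_j\}$ is a holomorphic frame for $\mathcal{E}_1 \otimes \mathcal{E}_2$, and the tensor product inner product gives $h = h_1 \otimes h_2$, so that $h^{-1} = h_1^{-1} \otimes h_2^{-1}$. The Leibniz rule applied to $\partial h / \partial w_i$ yields
\begin{equation*}
h^{-1}\frac{\partial h}{\partial w_i} = \left(h_1^{-1}\frac{\partial h_1}{\partial w_i}\right) \otimes I_{n_2} + I_{n_1} \otimes \left(h_2^{-1}\frac{\partial h_2}{\partial w_i}\right),
\end{equation*}
after expanding $\partial h / \partial w_i = (\partial h_1 / \partial w_i)\otimes h_2 + h_1 \otimes (\partial h_2 / \partial w_i)$ and using that $(A_1\otimes A_2)(B_1\otimes B_2) = A_1B_1\otimes A_2B_2$. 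Differentiating once more with respect to $\overline{w}_j$ and inserting a minus sign per the definition (\ref{curvature}) gives the base identity.

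The propagation to arbitrary $I, J$ is then immediate: the operator $\partial^{|I|+|J|}/\partial w^I \partial \overline{w}^J$ is $\mathbb{C}$-linear and distributes over the sum, and since it acts on each tensor factor independently (with $I_{n_1}$ and $I_{n_2}$ being constants that are annihilated by derivatives but preserved under tensor products), we obtain
\begin{equation*}
K_{\mathcal{E}_1 \otimes \mathcal{E}_2,\,w^I \overline{w}^J} = K_{\mathcal{E}_1,\,w^I \overline{w}^J} \otimes I_{n_2} + I_{n_1} \otimes K_{\mathcal{E}_2,\,w^I \overline{w}^J}.
\end{equation*}

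There is no serious obstacle here; the only mild subtlety is bookkeeping the tensor structure when expanding $h^{-1}\partial h$ to confirm the additive (as opposed to multiplicative) form of the identity, and verifying that the covariant derivative notation $K_{\,\cdot\,, w^I \overline{w}^J}$ used in Theorem 1.2 agrees with the plain iterated partial derivative of the curvature endomorphism expressed in the chosen frame, so that applying $\partial^{|I|+|J|}/\partial w^I \partial \overline{w}^J$ is legitimate on both sides in a compatible way.
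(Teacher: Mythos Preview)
Your base case is correct and matches the paper exactly. The gap is in the propagation step: the symbol $K_{\,\cdot\,,w^I\overline{w}^J}$ does \emph{not} denote the plain iterated partial derivative of the curvature matrix. As the paper's proof makes explicit, the holomorphic covariant derivative carries a connection term:
\[
K_{\mathcal{E},\,w^{e_j}} \;=\; \frac{\partial K_{\mathcal{E}}}{\partial w_j} \;+\; \Bigl[\,h_{\mathcal{E}}^{-1}\frac{\partial h_{\mathcal{E}}}{\partial w_j},\; K_{\mathcal{E}}\,\Bigr],
\]
whereas only the anti-holomorphic derivative is the bare $\partial/\partial\overline{w}_j$. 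You flagged exactly this as a ``mild subtlety'' and then assumed it away; for bundles of rank larger than one the commutator is generically nonzero, so simply hitting both sides of the base identity with $\partial^{|I|+|J|}/\partial w^I\partial\overline{w}^J$ does not produce the claimed formula.

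The paper repairs this by an induction that tracks the commutator term. The key point you are missing is that the connection form of the tensor product is $\theta_1\otimes I_{n_2}+I_{n_1}\otimes\theta_2$ (with $\theta_k=h_k^{-1}\partial h_k$), and operators of the shape $A\otimes I_{n_2}$ commute with operators of the shape $I_{n_1}\otimes B$. Hence
\[
\bigl[\theta_1\otimes I + I\otimes\theta_2,\; K_1\otimes I + I\otimes K_2\bigr]
= [\theta_1,K_1]\otimes I \;+\; I\otimes[\theta_2,K_2],
\]
and the additive tensor decomposition survives each application of the holomorphic covariant derivative. Inserting this observation (and the trivial $\overline{w}$-step) into an induction on $|I|+|J|$ gives the result; your argument becomes correct once you replace ``apply $\partial/\partial w_j$'' by ``apply the covariant $w_j$-derivative and use the commutator cancellation above.''
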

\begin{proof}
Let $\{\phi_1, \phi_2, \ldots, \phi_{n_1}\}$ and $\{\gamma_{1},\gamma_{2},\ldots,\gamma_{n_2}\}$ be holomorphic frames of $\mathcal{E}_{1}$ and of $\mathcal{E}_{2}$, respectively. Then $\{\phi_1 \otimes\gamma_{1},\phi_1 \otimes\gamma_{2},\ldots,\phi_{n_1} \otimes \gamma_1, \ldots, \phi_{n_1}\otimes\gamma_{n_2}\}$ is a holomorphic frame of $\mathcal{E}_{1}\otimes \mathcal{E}_{2}$ and $h_{\mathcal{E}_{1}\otimes \mathcal{E}_{2}}=h_{\mathcal{E}_{1}} \otimes h_{\mathcal{E}_{2}}$ so that
\begin{eqnarray*}
&&{K}_{\mathcal{E}_{1}\otimes \mathcal{E}_{2}}\\[4pt]\nonumber
&=&\left(\frac{\partial}{\partial \overline{w}_{j}}\left(h_{\mathcal{E}_{1}\otimes \mathcal{E}_{2}}^{-1}\frac{\partial h_{\mathcal{E}_{1}\otimes \mathcal{E}_{2}}}{\partial w_{i}} \right)\right)_{i,j=1}^{m}\\[4pt]\nonumber
&=&\left(\frac{\partial}{\partial \overline{w}_{j}}\left[(h^{-1}_{\mathcal{E}_{1}}\otimes h^{-1}_{\mathcal{E}_{2}})\left(\frac{\partial h_{\mathcal{E}_{1}}}{\partial w_{i}}\otimes h_{\mathcal{E}_{2}}+h_{\mathcal{E}_{1}}\otimes\frac{\partial h_{\mathcal{E}_{2}}}{\partial w_{i}}\right)\right]\right)_{i,j=1}^{m}\\[4pt]\nonumber
&=&\left(\frac{\partial}{\partial \overline{w}_{j}}\left[(h^{-1}_{\mathcal{E}_{1}}\otimes h^{-1}_{\mathcal{E}_{2}})\left(\frac{\partial h_{\mathcal{E}_{1}}}{\partial w_{i}}\otimes h_{\mathcal{E}_{2}}\right)\right]\right)_{i,j=1}^{m}
+\left(\frac{\partial}{\partial \overline{w}_{j}}\left[(h^{-1}_{\mathcal{E}_{1}}\otimes h^{-1}_{\mathcal{E}_{2}})\left(h_{\mathcal{E}_{1}}\otimes\frac{\partial h_{\mathcal{E}_{2}}}{\partial w_{i}}\right)\right]\right)_{i,j=1}^{m}\\[4pt]\nonumber
&=&\left(\frac{\partial}{\partial \overline{w}_{j}}\left(h^{-1}_{\mathcal{E}_{1}}\frac{\partial h_{\mathcal{E}_{1}}}{\partial w_{i}}\otimes h^{-1}_{\mathcal{E}_{2}} h_{\mathcal{E}_{2}}\right)\right)_{i,j=1}^{m}
+\left(\frac{\partial}{\partial \overline{w}_{j}}\left(h^{-1}_{\mathcal{E}_{1}} h_{\mathcal{E}_{1}}\otimes
h^{-1}_{\mathcal{E}_{2}}\frac{\partial h_{\mathcal{E}_{2}}}{\partial w_{i}}\right)\right)_{i,j=1}^{m}\\[4pt]\nonumber
&=&{K}_{\mathcal{E}_1}\otimes I_{n_{2}}+I_{n_{1}}\otimes{K}_{\mathcal{E}_2}.
\end{eqnarray*}
It follows that for $e_{j}=(0,\cdots,1,\cdots,0)\in\mathbb{N}_{0}^{m}$ with $1$ in the $j$-th position,
\begin{eqnarray*}
{K}_{\mathcal{E}_1\otimes \mathcal{E}_2, w^{e_{j}}}&=&\frac{\partial {K}_{\mathcal{E}_1\otimes \mathcal{E}_2}}{\partial w_{j}}+\left[h_{\mathcal{E}_{1}\otimes \mathcal{E}_{2}}^{-1}\frac{\partial h_{\mathcal{E}_{1}\otimes \mathcal{E}_{2}}}{\partial w_{j}}, \,\,{K}_{\mathcal{E}_1\otimes \mathcal{E}_2}\right]\\[4pt]\nonumber
&=&\frac{\partial}{\partial w_{j}}\Big({K}_{\mathcal{E}_1}\otimes I_{n_{2}}+I_{n_{1}}\otimes {K}_{\mathcal{E}_2}\Big)
\\[4pt]\nonumber
&&\quad+\left[h^{-1}_{\mathcal{E}_{1}}\frac{\partial h_{\mathcal{E}_{1}}}{\partial w_{j}}\otimes I_{n_{2}}+I_{n_{1}}\otimes h^{-1}_{\mathcal{E}_{2}}\frac{\partial h_{\mathcal{E}_{2}}}{\partial w_{j}},\,\,{K}_{\mathcal{E}_1}\otimes I_{n_{2}}+I_{n_{1}}\otimes {K}_{\mathcal{E}_2}\right]\\[4pt]\nonumber
&=&\left[\frac{\partial{K}_{\mathcal{E}_1}}{\partial w_{j}}+h^{-1}_{\mathcal{E}_{1}}\frac{\partial h_{\mathcal{E}_{1}}}{\partial w_{j}}{K}_{\mathcal{E}_1}-{K}_{\mathcal{E}_1}h^{-1}_{\mathcal{E}_{1}}\frac{\partial h_{\mathcal{E}_{1}}}{\partial w_{j}}\right]\otimes I_{n_{2}}\\[4pt]\nonumber
&&\quad+I_{n_{1}}\otimes\left[\frac{\partial {K}_{\mathcal{E}_2}}{\partial w_{j}}+ h^{-1}_{\mathcal{E}_{2}}\frac{\partial h_{\mathcal{E}_{2}}}{\partial w_{j}}{K}_{\mathcal{E}_2}-{K}_{\mathcal{E}_2}h^{-1}_{\mathcal{E}_{2}}\frac{\partial h_{\mathcal{E}_{2}}}{\partial w_{j}}\right]\\[4pt]\nonumber
&=&{K}_{\mathcal{E}_1,w^{e_{j}}}\otimes I_{n_{2}}+I_{n_{1}}\otimes {K}_{\mathcal{E}_2,w^{e_{j}}},\nonumber
\end{eqnarray*}
and
$${K}_{\mathcal{E}_1\otimes \mathcal{E}_2, \overline{w}^{e_{j}}}=\frac{\partial {K}_{\mathcal{E}_1\otimes \mathcal{E}_2}}{\partial \overline{w}_{j}}
=\frac{\partial}{\partial \overline{w}_{j}}\Big({K}_{\mathcal{E}_1}\otimes I_{n_{2}}+I_{n_{1}}\otimes {K}_{\mathcal{E}_2}\Big)={K}_{\mathcal{E}_1, \overline{w}^{e_{j}}}\otimes I_{n_{2}}+I_{n_{1}}\otimes {K}_{\mathcal{E}_2, \overline{w}^{e_{j}}}.$$

Next, if
$${K}_{\mathcal{E}_1\otimes \mathcal{E}_2, w^J}={K}_{\mathcal{E}_1, w^J}\otimes I_{n_{2}}+I_{n_{1}}\otimes{K}_{\mathcal{E}_2,w^J}\,\,\text{and}\,\, {K}_{\mathcal{E}_1\otimes \mathcal{E}_2, \overline{w}^J}={K}_{\mathcal{E}_1, \overline{w}^J}\otimes I_{n_{2}}+I_{n_{1}}\otimes {K}_{\mathcal{E}_2,\overline{w}^J}$$ holds for some $J\in\mathbb{N}_{0}^{m}$,
then
\begin{eqnarray*}
{K}_{\mathcal{E}_1\otimes \mathcal{E}_2, w^{J+e_{j}}}&=&\frac{\partial {K}_{\mathcal{E}_1\otimes \mathcal{E}_2, w^{J}}}{\partial w_{j}}+\left[h_{\mathcal{E}_{1}\otimes \mathcal{E}_{2}}^{-1}\frac{\partial h_{\mathcal{E}_{1}\otimes \mathcal{E}_{2}}}{\partial w_{j}},\,\, {K}_{\mathcal{E}_1\otimes \mathcal{E}_2, w^{J}}\right]\\[4pt]\nonumber
&=&\frac{\partial}{\partial w_{j}}\Big ({K}_{\mathcal{E}_1, w^J}\otimes I_{n_{2}}+I_{n_{1}}\otimes {K}_{\mathcal{E}_2,w^J}\Big)\\[4pt]\nonumber
&&\quad+\left[h^{-1}_{\mathcal{E}_{1}}\frac{\partial h_{\mathcal{E}_{1}}}{\partial w_{j}}\otimes I_{n_{2}}+I_{n_{1}}\otimes h^{-1}_{\mathcal{E}_{2}}\frac{\partial h_{\mathcal{E}_{2}}}{\partial w_{j}},\,\,{K}_{\mathcal{E}_1, w^J}\otimes I_{n_{2}}+I_{n_{1}}\otimes {K}_{\mathcal{E}_2,w^J}\right]\\[4pt]\nonumber
&=&\left[\frac{\partial {K}_{\mathcal{E}_1, w^J}}{\partial w_{j}}+h^{-1}_{\mathcal{E}_{1}}\frac{\partial h_{\mathcal{E}_{1}}}{\partial w_{j}}{K}_{\mathcal{E}_1, w^J}-{K}_{\mathcal{E}_1, w^J}h^{-1}_{\mathcal{E}_{1}}\frac{\partial h_{\mathcal{E}_{1}}}{\partial w_{j}}\right]\otimes I_{n_{2}}\\[4pt]\nonumber
&&\quad+I_{n_{1}}\otimes\left[\frac{\partial {K}_{\mathcal{E}_2, w^J}}{\partial w_{j}}+h^{-1}_{\mathcal{E}_{2}}\frac{\partial h_{\mathcal{E}_{2}}}{\partial w_{j}}{K}_{\mathcal{E}_2, w^J}-{K}_{\mathcal{E}_2, w^J}h^{-1}_{\mathcal{E}_{2}}\frac{\partial h_{\mathcal{E}_{2}}}{\partial w_{j}}\right]\\[4pt]\nonumber
&=&{K}_{\mathcal{E}_1, w^{J+e_{j}}}\otimes I_{n_{2}}+I_{n_{1}}\otimes{K}_{\mathcal{E}_2, w^{J+e_{j}}},\nonumber
\end{eqnarray*}
and
\begin{eqnarray*}
{K}_{\mathcal{E}_1\otimes \mathcal{E}_2, \overline{w}^{J+e_{j}}}&=&\frac{\partial{K}_{\mathcal{E}_1\otimes \mathcal{E}_2, \overline{w}^J}}{\partial \overline{w}_{j}}\\[4pt]\nonumber
&=&\frac{\partial}{\partial \overline{w}_{j}}\left({K}_{\mathcal{E}_1, \overline{w}^J}\otimes I_{n_{2}}+I_{n_{1}}\otimes {K}_{\mathcal{E}_2,\overline{w}^J}\right)\\[4pt]\nonumber
&=&{K}_{\mathcal{E}_1, \overline{w}^{J+e_{j}}}\otimes I_{n_{2}}+I_{n_{1}}\otimes{K}_{\mathcal{E}_2,\overline{w}^{J+e_{j}}},\nonumber
\end{eqnarray*}
for any $e_{j}=(0,\cdots,1,\cdots,0)\in\mathbb{N}_{0}^{m}.$
Hence,
$${K}_{\mathcal{E}_1\otimes \mathcal{E}_2, w^J}={K}_{\mathcal{E}_1, w^J}\otimes I_{n_{2}}+I_{n_{1}}\otimes{K}_{\mathcal{E}_2,w^J}\,\,\text{and}\,\, {K}_{\mathcal{E}_1\otimes \mathcal{E}_2, \overline{w}^J}={K}_{\mathcal{E}_1, \overline{w}^J}\otimes I_{n_{2}}+I_{n_{1}}\otimes {K}_{\mathcal{E}_2,\overline{w}^J}$$ for all $J\in\mathbb{N}_{0}^{m}$.
Since without loss of generality, we have for $I, J-e_{j} \in\mathbb{N}_{0}^{m},$
$${K}_{\mathcal{E}_1\otimes \mathcal{E}_2, w^I\overline{w}^{J-e_{j}}}={K}_{\mathcal{E}_1, w^I\overline{w}^{J-e_{j}}}\otimes I_{n_{2}}+I_{n_{1}}\otimes{K}_{\mathcal{E}_2,w^I\overline{w}^{J-e_{j}}},$$
it is easy to see that
\begin{eqnarray*}
{K}_{\mathcal{E}_1\otimes \mathcal{E}_2, w^I\overline{w}^J}&=&\frac{\partial}{\partial \overline{w}_{j}}\left({K}_{\mathcal{E}_1, w^I\overline{w}^{J-e_{j}}}\otimes I_{n_{2}}+I_{n_{1}}\otimes{K}_{\mathcal{E}_2,w^I\overline{w}^{J-e_{j}}}\right)\\[4pt]\nonumber
&=&{K}_{\mathcal{E}_1, w^I\overline{w}^{J}}\otimes I_{n_{2}}+I_{n_{1}}\otimes{K}_{\mathcal{E}_2,w^I\overline{w}^{J}}.\nonumber
\end{eqnarray*}
\end{proof}
We are now ready to prove our second main theorem of the paper.

\begin{theorem}\label{thm3.2}
Let $\mathbf{T}=(T_{1},\cdots,T_{m})\in \mathcal{B}_{n}^{m}(\mathbb{B}_m)$ and consider the operator tuple $\mathbf{M}_{z}^{*}=(M_{z_1}^{*},\cdots,M_{z_m}^{*}) \in \mathcal{B}_{n_1}^m(\mathbb{B}_m)$ on a Hilbert space $\mathcal{H}_{K}$ defined on $\mathbb{B}_m$ with reproducing kernel $K.$ Suppose that $n_2:=n/n_1 \in \mathbb{N}$ and that there exist an isometry $V$ and a Hermitian holomorphic vector bundle $\mathcal{E}$ over $\mathbb{B}_m$ such that
\begin{equation}\label{07}
VK_{\mathbf{T}, w^I{\overline{w}}^J}V^*-K_{\mathbf{M}_z^*,w^I{\overline{w}}^J} \otimes I_{n_2}=I_{n_1} \otimes {K}_{\mathcal{E},w^{I}\overline{w}^{J}}, \quad I,J \in\mathbb{N}_{0}^{m}.
\end{equation}
If the bundle $\mathcal{E}$ satisfies condition $(\mathbf{C})$ via $\mathcal{H}_K$, then $\mathbf{T} \sim_{s} \bigoplus \limits_{1}^{n_2} \mathbf{M}_z^*$.
\end{theorem}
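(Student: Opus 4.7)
The plan is to carry out the argument in two stages: first, use the multi-variable Cowen-Douglas completeness of curvature invariants together with Lemma \ref{lem3.1.1} to identify $\mathbf{T}$ unitarily with a realization whose eigenbundle is $\mathcal{E}_{\mathbf{M}_z^*}\otimes\mathcal{E}$; second, exploit condition $(\mathbf{C})$ to convert that bundle equivalence into a genuine similarity of operator tuples via a bounded multiplier with bounded left inverse.

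For the first stage, I would apply Lemma \ref{lem3.1.1} to the right-hand side of (\ref{07}) to rewrite the hypothesis as
\begin{equation*}
V\,K_{\mathbf{T},w^I\overline{w}^J}\,V^* \;=\; K_{\mathcal{E}_{\mathbf{M}_z^*}\otimes\mathcal{E},\,w^I\overline{w}^J}, \qquad I,J\in\mathbb{N}_0^m.
\end{equation*}
Since the full collection of covariant derivatives of the curvature forms a complete system of unitary invariants in $\mathcal{B}_n^m(\mathbb{B}_m)$, this identity (with the isometry $V$ absorbed into a change of holomorphic frame) forces a Hermitian holomorphic isomorphism $\mathcal{E}_{\mathbf{T}}\cong\mathcal{E}_{\mathbf{M}_z^*}\otimes\mathcal{E}$, and hence $\mathbf{T}\sim_u\widetilde{\mathbf{T}}$, where $\widetilde{\mathbf{T}}:=\mathbf{M}^*_{z,E}|_{\widetilde{\mathcal{M}}}$ is the restriction of the multiplication tuple to the closed subspace $\widetilde{\mathcal{M}}:=\bigvee\{K(\cdot,\overline{w})\otimes f_i(w):w\in\mathbb{B}_m,\,1\le i\le n_2\}\subset\mathcal{H}_K\otimes E$.

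For the second stage, Lemma \ref{lem3.1} applied to the multipliers provided by condition $(\mathbf{C})$ yields bounded operators $M^*_{(F^{\#})^*}:\mathcal{H}_K\otimes\mathbb{C}^{n_2}\to\mathcal{H}_K\otimes E$ and $M^*_{(G^{\#})^*}:\mathcal{H}_K\otimes E\to\mathcal{H}_K\otimes\mathbb{C}^{n_2}$ satisfying $M^*_{(F^{\#})^*}(K(\cdot,\overline w)\otimes\sigma_i)=K(\cdot,\overline w)\otimes f_i(w)$. The pointwise identity $G^{\#}F^{\#}\equiv I$ translates, after taking adjoints inside the multiplier product, into $M^*_{(G^{\#})^*}M^*_{(F^{\#})^*}=I$ on $\mathcal{H}_K\otimes\mathbb{C}^{n_2}$, so $M^*_{(F^{\#})^*}$ is bounded below with closed range equal to $\widetilde{\mathcal{M}}$, and by Lemma \ref{lem3.1.1.1} it intertwines $\bigoplus_1^{n_2}\mathbf{M}_z^*$ with $\widetilde{\mathbf{T}}$. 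Composing this similarity with the unitary equivalence $\mathbf{T}\sim_u\widetilde{\mathbf{T}}$ from the first stage delivers $\mathbf{T}\sim_s\bigoplus_1^{n_2}\mathbf{M}_z^*$.

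The main obstacle will lie in the first stage: one needs the correct multi-variable version of the Cowen-Douglas uniqueness theorem upgrading the formal curvature-derivative equality to an actual Hermitian holomorphic bundle isomorphism, and then the verification that $\widetilde{\mathcal{M}}$ is $\mathbf{M}^*_{z,E}$-invariant and that its eigenbundle, with the Hermitian metric inherited from $\mathcal{H}_K\otimes E$, reproduces $\mathcal{E}_{\mathbf{M}_z^*}\otimes\mathcal{E}$ up to the isometry $V$. Once the bundle-level identification is secured, the multiplier step is a clean analogue of the standard Nikolski-style argument.
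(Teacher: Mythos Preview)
Your proposal is correct and follows essentially the same route as the paper: Lemma \ref{lem3.1.1} is used to rewrite (\ref{07}) as equality of all curvature derivatives between $\mathcal{E}_{\mathbf{T}}$ and $\mathcal{E}_{\mathbf{M}_z^*}\otimes\mathcal{E}$, the multi-variable Cowen--Douglas congruence theorem (the paper cites \cite{CD3}) upgrades this to a unitary bundle map $U$, and then condition $(\mathbf{C})$ yields the multiplier $M^*_{(F^{\#})^*}$ with left inverse $M^*_{(G^{\#})^*}$ carrying $\mathcal{E}_{\bigoplus_1^{n_2}\mathbf{M}_z^*}$ onto $\mathcal{E}_{\mathbf{M}_z^*}\otimes\mathcal{E}$, so that $UM^*_{(F^{\#})^*}$ implements the similarity. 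Your phrasing via the restriction $\widetilde{\mathbf{T}}=\mathbf{M}^*_{z,E}|_{\widetilde{\mathcal{M}}}$ is a cosmetic reformulation of the paper's direct bundle-congruence language, and the obstacle you flag in stage one is exactly the point the paper handles by invoking \cite{CD3}.
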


\begin{proof}
First, from $(\ref{07})$ and the definition of condition $(\textbf{C})$, we know that $\mathcal{E}$ is $n_{2}$-dimensional Hermitian holomorphic vector bundle. Without losing generality, assume that $\{f_{1},\ldots,f_{n_{2}}\}$ is a frame of $\mathcal{E}$.
It is easy to see that the Hermitian holomorphic vector bundle $\mathcal{E}_{\mathbf{M}^*_{z}} \otimes \mathcal{E}$ is expressed as
$$(\mathcal{E}_{\mathbf{M}_{z}^{*}}\otimes \mathcal{E})(w)=\bigvee\limits_{1 \leq i \leq n_1 \atop 1 \leq j \leq n_2}\{K(\cdot,\overline w)\widetilde{\sigma}_i \otimes f_{j}(w)\}, \quad w \in \mathbb{B}_m,$$
where $\{\widetilde{\sigma}_i\}_{i=1}^{n_1}$ is an orthonormal basis for $\mathbb{C}^{n_1}$. By Lemma \ref{lem3.1.1} and $(\ref{07})$, we then have for some isometry $V$ and for all $I, J \in\mathbb{N}_{0}^{m}$,
$${K}_{\mathcal{E}_{\mathbf{M}_{z}^{*}} \otimes \mathcal{E}, w^I{\overline{w}}^J}={K}_{\mathcal{E}_{\mathbf{M}_{z}^{*},} w^I{\overline{w}}^J}\otimes I_{n_2}+I_{n_1} \otimes {K}_{\mathcal{E},w^I{\overline{w}}^J}=V{K}_{\mathcal{E}_{\mathbf{T}},w^{I}\overline{w}^{J}}V^{*}.$$
It is also proven in \cite{CD3} that $\mathcal{E}_{\mathbf{M}_{z}^{*}} \otimes \mathcal{E}$ is congruent to $\mathcal{E}_{\mathbf{T}}$, that is, there is a unitary operator $U$ such that
\begin{equation}\label{08}
U(\mathcal{E}_{\mathbf{M}_{z}^{*}}\otimes \mathcal{E})(w)=\mathcal{E}_{\mathbf{T}}(w),\quad w\in\mathbb{B}_m.
\end{equation}

Next, since the Hermitian holomorphic vector bundle $\mathcal{E}$ satisfies condition $(\mathbf{C})$, for the holomorphic frame $\{f_{1},\ldots,f_{n_{2}}\}$ of $\mathcal{E}$, there exist $F\in H^{\infty}_{\mathbb{C}^{n_2}\rightarrow E}(\mathbb{B}_m)$ and $G \in H^{\infty}_{E \rightarrow \mathbb{C}^{n_2}}(\mathbb{B}_m)$ such that $G^{\#}(\overline{w})F^{\#}(\overline{w})=I$ for every $w \in \mathbb{B}_m$, where $E=\bigvee_{w\in\mathbb{B}_m}\{f_{1}(w),\ldots,f_{n_{2}}(w)\}.$
A proof similar to the one used in Lemma \ref{lem3.1} then implies that for $(F^{\#})^* \in \text{Mult}(\mathcal{H}_K \otimes E, \mathcal{H}_K \otimes \mathbb{C}^{n_2})$,
\begin{eqnarray*}
M^*_{(F^{\#})^*}(K(\cdot,\overline{w}) \widetilde{\sigma}_i \otimes \sigma_{j})&=&K(\cdot,\overline{w}) \widetilde{\sigma}_i \otimes F^{\#}(\overline{w})(\sigma_{j})\\[4pt]\nonumber
&=&K(\cdot,\overline{w})\widetilde{\sigma}_i \otimes F(w)(\sigma_{j})\\[4pt]\nonumber
&=&K(\cdot,\overline{w}) \widetilde{\sigma}_i \otimes f_{j}(w),\nonumber
\end{eqnarray*}
where $\{\sigma_j\}_{j=1}^{n_2}$ is an orthonormal basis for $\mathbb{C}^{n_2}$. In addition, for $f\in\mathcal{H}_{K}$ and $g\in E$,
\begin{eqnarray*}
M_{(F^{\#})^{*}}\left(f(z)\otimes g \right)&=&(F^{\# }(z))^{*}\left(\sum\limits_{i=0}^{\infty}\langle f(z), e_{i}\rangle e_{i}\otimes g \right)\\[4pt]\nonumber
&=&\sum\limits_{i=1}^{\infty}\langle f(z), e_{i}\rangle (F^{\#}(z))^{*}( e_{i}\otimes g)\\[4pt]\nonumber
&=&\sum\limits_{i=1}^{\infty}\langle f(z), e_{i}\rangle e_{i}\otimes  (F^{\#}(z))^{*}g\\[4pt]\nonumber
&=&f(z) \otimes  (F^{\#}(z))^{*}g,\nonumber
\end{eqnarray*}
where $\{e_{i}\}_{i=0}^{\infty}$ is an orthonormal basis for $\mathcal{H}_{K}$. Hence,
\begin{eqnarray*}
&&\Big \langle f(z)\otimes g, M^{*}_{(F^{\#})^{*}}(K(\cdot, \overline{w})\widetilde{\sigma}_{i}\otimes \sigma_{j})\Big \rangle \\[4pt]\nonumber
&=&\Big \langle M_{(F^{\#})^{*}}(f(z)\otimes g), K(\cdot, \overline{w})\widetilde{\sigma}_{i}\otimes \sigma_{j} \Big \rangle\\[4pt]\nonumber
&=&\Big \langle f(z)\otimes (F^{\#}(z))^{*}g, K(\cdot, \overline{w})\widetilde{\sigma}_{i}\otimes \sigma_{j}\Big \rangle\\[4pt]\nonumber
&=&\Big \langle f(z)\otimes \sum\limits_{k=1}^{n_{2}}\langle (F^{\#}(z))^{*}g, \sigma_{k}\rangle \sigma_{k}, K(\cdot, \overline{w})\widetilde{\sigma}_{i}\otimes \sigma_{j}\Big \rangle\\[4pt]\nonumber
&=&\sum\limits_{k=1}^{n_{2}}\Big \langle f(z)\langle (F^{\#}(z))^{*}g, \sigma_{k}\rangle\otimes \sigma_{k}, K(\cdot, \overline{w})\widetilde{\sigma}_{i}\otimes \sigma_{j}\Big \rangle\\[4pt]\nonumber
&=&\sum\limits_{k=1}^{n_{2}}\left\langle f(z)\langle (F^{\#}(z))^{*}g, \sigma_{k}\rangle,  K(\cdot, \overline{w})\widetilde{\sigma}_{i}\right\rangle \Big \langle \sigma_{k}, \sigma_{j} \Big \rangle\\[4pt]\nonumber
&=&\left\langle f({w})\langle (F^{\#}(\overline{w}))^{*}g, \sigma_{j}\rangle, \widetilde{\sigma}_{i}\right\rangle\\[4pt]\nonumber
&=&\langle f({w}), \widetilde{\sigma}_{i}\rangle\langle (F^{\#}(\overline{w}))^{*}g, \sigma_{j}\rangle\\[4pt]\nonumber
&=&\langle f(z),  K(\cdot, \overline{w})\widetilde{\sigma}_{i}\rangle\langle g, F^{\# }(\overline{w})\sigma_{j}\rangle\\[4pt]\nonumber
&=&\langle f(z)\otimes g,  K(\cdot, \overline{w})\widetilde{\sigma}_{i}\otimes F^{\# }(\overline{w})\sigma_{j}\rangle.\nonumber
\end{eqnarray*}
This shows that $M^{*}_{(F^{\#})^{ *}}(K(\cdot,\overline{w})\widetilde{\sigma}_{i}\otimes \sigma_{j})=K(\cdot,\overline{w})\widetilde{\sigma}_{i}\otimes F^{\#}(\overline{w})(\sigma_{j}).$

Furthermore, since $\mathcal{H}_{K}=\bigvee\limits_{w\in\mathbb{B}^m}\{K(\cdot,\overline{w})\xi:\xi\in\mathbb{C}^{n_{1}}\}$, we have
$\text{ran } M^{*}_{(F^{\#})^{*}}=\mathcal{H}_{K}\otimes E$
and $\Vert M^{*}_{(F^{\#})^{*}}\Vert=\Vert F^{\#} \Vert<\infty.$
Combining these results and taking into account the operator $M^{*}_{(G^{\#})^{*}}:\mathcal{H}_{K}\otimes E \longrightarrow \mathcal{H}_{K}\otimes \mathbb{C}^{n_{2}}$, we obtain
\begin{eqnarray*}
M^{*}_{(G^{\#})^{*}}M^{*}_{(F^{\#})^{*}}(K(\cdot,\overline{w})\widetilde{\sigma}_{i}\otimes \sigma_{j})&=&M^{*}_{(G^{\#})^{*}}\left(K(\cdot,\overline{w}\right)\widetilde{\sigma}_{i}\otimes F^{\#}(\overline{w})(\sigma_{j}))\\[4pt]\nonumber
&=&K(\cdot,\overline{w})\widetilde{\sigma}_{i}\otimes G^{\#}(\overline{w})F^{\#}(\overline{w})(\sigma_{j})\\[4pt]\nonumber
&=&K(\cdot,\overline{w})\widetilde{\sigma}_{i}\otimes \sigma_{j},\nonumber
\end{eqnarray*}
for $1\leq i\leq n_{1}$ and $1\leq j\leq n_{2}.$
Therefore, $M^{*}_{(G^{\#})^{*}}M^{*}_{(F^{\#})^{*}}\equiv I$ so that $M^{*}_{(F^{\#})^{*}}$ is an invertible operator satisfying
$$
M^{*}_{(F^{\#})^{*}}(\mathcal{E}_{\mathbf{\mathbf{M}_{z}^{*}}\otimes I_{n_{2}}}(w))=(\mathcal{E}_{\mathbf{M}_{z}^{*}}\otimes \mathcal{E})(w),\quad w\in\mathbb{B}^m.
$$
From this and $(\ref{08})$, we conclude that the invertible operator $UM^*_{(F^{\#})^*}$
establishes the similarity between $\mathbf{T}$ and $\bigoplus \limits_{1}^{n_2} \mathbf{M}_z^*$.
\end{proof}

The following corollary should be compared to M. Uchiyama's result in \cite{U}. It is already known that the result holds for weighted Bergman spaces $\mathcal{H}_k$, $k > m+1$:
\begin{corollary}
Let $\mathbf{T}=(T_{1},\cdots,T_{m})\in \mathcal{B}_{1}^{m}(\mathbb{B}_m)$ and consider the operator tuple $\mathbf{M}_{z}^{*}=(M_{z_1}^{*},\cdots,M_{z_m}^{*})$ on a Hilbert space $\mathcal{H}_{K}$ such that $\text{Mult}(\mathcal{H}_{K})=H^{\infty}(\mathbb{B}_m)$.
If $\mathcal{E}_{\mathbf{T}}= \mathcal{E}_{\mathbf{M}_{z}^{*}}\otimes \mathcal{E}$ for some Hermitian holomorphic vector bundle $\mathcal{E}$ over $\mathbb{B}_m$, where $\mathcal{E}(w)=\bigvee f(w)$ and $f$ is bounded by positive constants, then  $\mathbf{T} \sim_{s} \mathbf{M}_{z}^{*}.$
\end{corollary}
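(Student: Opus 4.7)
Since $\mathbf{T}\in\mathcal{B}_1^m(\mathbb{B}_m)$ and $\mathbf{M}_z^*\in\mathcal{B}_1^m(\mathbb{B}_m)$ (the latter following from $\text{Mult}(\mathcal{H}_K)=H^{\infty}(\mathbb{B}_m)$), both eigenvector bundles are line bundles, so the tensor factor $\mathcal{E}$ is also a line bundle and we are in the $n=n_1=n_2=1$ case of Theorem \ref{thm3.2}. The plan is to apply that theorem directly, which requires verifying two hypotheses: the curvature identity $(\ref{07})$ and condition $(\mathbf{C})$ for $\mathcal{E}$ via $\mathcal{H}_K$.

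For the curvature identity, the given congruence $\mathcal{E}_{\mathbf{T}} = \mathcal{E}_{\mathbf{M}_z^*}\otimes\mathcal{E}$ provides an isometry $V$ intertwining the Hermitian structures of the two bundles, so that Lemma \ref{lem3.1.1} yields
\begin{equation*}
VK_{\mathbf{T},w^I\bar w^J}V^{*}=K_{\mathbf{M}_z^*, w^I\bar w^J}+K_{\mathcal{E}, w^I\bar w^J}, \quad I, J\in\mathbb{N}_0^m,
\end{equation*}
which is exactly $(\ref{07})$ in the case $n_1=n_2=1$.

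For condition $(\mathbf{C})$, the natural candidates are $F(w):\mathbb{C}\to E$ defined by $\lambda\mapsto\lambda f(w)$ and $G(w):E\to\mathbb{C}$ specified by the requirement $G(w)(f(w))=1$, induced by the metric. The hypothesis $0<c_1\leq\|f(w)\|\leq c_2<\infty$ yields $\|F\|_\infty\leq c_2$ and $\|G\|_\infty\leq c_1^{-1}$, while $G(w)F(w)=I$ on $\mathbb{C}$ translates to $G^{\#}(\bar w)F^{\#}(\bar w)=I$ in the sharp notation of the definition of condition $(\mathbf{C})$. The adjoints $(F^{\#})^*$ and $(G^{\#})^*$ are then bounded holomorphic operator-valued functions on $\mathbb{B}_m$.

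The principal obstacle I expect is confirming that $(F^{\#})^*\in\text{Mult}(\mathcal{H}_K\otimes E, \mathcal{H}_K)$ and $(G^{\#})^*\in\text{Mult}(\mathcal{H}_K, \mathcal{H}_K\otimes E)$ using only the scalar identification $\text{Mult}(\mathcal{H}_K)=H^{\infty}(\mathbb{B}_m)$, since this is not a pointwise statement. The strategy is to test on the dense span of kernel sections: by Lemma \ref{lem3.1} one obtains $M^*_{(F^{\#})^*}(K(\cdot,\bar w))=K(\cdot,\bar w)\otimes f(w)$ and analogously for $M^*_{(G^{\#})^*}$, so the required operator bounds reduce, thanks to the two-sided control on $\|f\|$ and the fact that $\mathcal{E}$ is a line bundle generated by a single uniformly bounded section, to scalar multiplier bounds governed by $\text{Mult}(\mathcal{H}_K)=H^{\infty}(\mathbb{B}_m)$. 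Once condition $(\mathbf{C})$ is verified, Theorem \ref{thm3.2} applies with $n_2=1$ and yields $\mathbf{T}\sim_s \mathbf{M}_z^*$.
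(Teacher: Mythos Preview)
Your overall plan—specialize Theorem~\ref{thm3.2} to $n=n_1=n_2=1$, use the given bundle identity $\mathcal{E}_{\mathbf{T}}=\mathcal{E}_{\mathbf{M}_z^*}\otimes\mathcal{E}$ to supply the curvature hypothesis~\eqref{07} (or, more directly, the bundle congruence it is used to establish in the proof of Theorem~\ref{thm3.2}), and then verify condition~$(\mathbf{C})$ for $\mathcal{E}$ via $\mathcal{H}_K$—is exactly what the paper intends; the corollary is placed immediately after Theorem~\ref{thm3.2} without a separate proof for precisely this reason.

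There is, however, a concrete problem in your sketch of condition~$(\mathbf{C})$. Your proposed left inverse $G(w):E\to\mathbb{C}$, ``specified by $G(w)(f(w))=1$, induced by the metric,'' can only mean $G(w)(v)=\langle v,f(w)\rangle/\|f(w)\|^{2}$. But this map is \emph{anti}-holomorphic in $w$, since the inner product is conjugate-linear in the second slot and $f$ is holomorphic; hence $G\notin H^{\infty}_{E\to\mathbb{C}}(\mathbb{B}_m)$, and condition~$(\mathbf{C})$ fails for this choice before any multiplier question can even be posed. The genuine obstacle is therefore one step earlier than where you placed it: producing a \emph{holomorphic} $G$ with $G(w)f(w)\equiv 1$ from the lower bound $\|f(w)\|\geq c_1>0$ is precisely the operator corona problem on $\mathbb{B}_m$. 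The Remark immediately following this corollary in the paper makes this link explicit. In effect the corollary is recording that the two-sided bound on $f$, together with $\text{Mult}(\mathcal{H}_K)=H^{\infty}(\mathbb{B}_m)$, packages exactly the corona-type data needed for condition~$(\mathbf{C})$; your write-up should acknowledge that this is what is being invoked rather than attempt a metric-based construction that cannot be holomorphic.
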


\begin{remark}
Condition $(\mathbf{C})$ is related to the corona theorem.
Let $\mathbf{T}=(T_{1},\cdots,T_{m})\in \mathcal{B}_{1}^{m}(\mathbb{B}_m)$ and consider the operator tuple $\mathbf{M}_{z}^{*}=(M_{z_1}^{*},\cdots,M_{z_m}^{*})$ on a Hilbert space $\mathcal{H}_{K}$ defined on $\mathbb{B}_m$. Suppose that $\mathcal{E}_{\mathbf{T}}= \mathcal{E}_{\mathbf{M}_{z}^{*}}\otimes \mathcal{E}$ for some Hermitian holomorphic vector bundle $\mathcal{E}$ over $\mathbb{B}_m$, where $\mathcal{E}(w)=\bigvee f(w)$ is such that $f(w)=(f_{1}(w),f_{2}(w),\cdots,f_{n}(w))^{T}$ and  $f_j \in \text{Mult}(\mathcal{H}_K), 1 \leq j \leq n$. Let there exist a constant $\delta >0$ satisfying
\begin{equation}\label{Bezout}
\delta \leq \left(\sum_{j=1}^n \vert f_j(w)\vert^2 \right)^{\frac{1}{2}}, \quad w \in \mathbb{B}_m,
\end{equation}
and a function $F \in H^{\infty}_{\mathbb{C} \rightarrow E}(\mathbb{B}_m)$ such that
$$F^{\#}(\overline{w})(\lambda):=F(w)(\lambda)(w)={\lambda}f(w) \text{ and }(F^{\#})^* \in \text{Mult}(\mathcal{H}_K \otimes E, \mathcal{H}_K \otimes \mathbb{C}),
$$
for $\lambda \in \mathbb{C}$ and $E=\bigvee\limits_{w\in\mathbb{B}_m}f(w)$. Then condition (\ref{Bezout}) implies the existence of $g_1, \cdots g_n \in \text{Mult}(\mathcal{H}_K)$
such that
$$
\sum_{j=1}^n g_j(w)f_j(w)=1, \quad w \in \mathbb{B}_m.
$$
Setting $G=(g_1, g_2, \cdots, g_n)$, one can proceed as in the proof of Theorem \ref{thm3.2} to conclude that $\mathbf{T} \sim_{s} \mathbf{M}^*_z$.
\end{remark}

\begin{corollary}
Let $T\in \mathcal{B}_{n}^{1
}(\mathbb{D})$ and $S=(M_{z}^{*}, \mathcal{H}_{K}) \in \mathcal{B}_{n_1}^1(\mathbb{D})$. If
$$\text{Mult}(\mathcal{H}_{K})= H^{\infty}(\mathbb{D})\quad\text{and}\quad \mathcal{E}_{T}\sim_{u}\mathcal{E}_{S}\otimes \mathcal{E},$$
for some Hermitian holomorphic vector bundle $\mathcal{E}$ of rank $n_2:=n/{n_1}$, then $T\sim_{s}\bigoplus\limits_{i=1}^{n_2}S$ if and only if $\mathcal{E}$ satisfies condition $(\mathbf{C})$ via $\mathcal{H}_K$.
\end{corollary}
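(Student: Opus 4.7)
The plan is to treat the two implications separately: sufficiency reduces to Theorem \ref{thm3.2}, while necessity requires reverse-engineering the multipliers $F$ and $G$ of condition $(\mathbf{C})$ from the similarity intertwiner. For ($\Leftarrow$), suppose $\mathcal{E}$ satisfies condition $(\mathbf{C})$ via $\mathcal{H}_K$. The unitary equivalence $\mathcal{E}_T\sim_u\mathcal{E}_S\otimes\mathcal{E}$ of Hermitian holomorphic bundles forces pointwise matching of curvatures together with all their covariant derivatives, so there is an isometry $V$ with $V K_{T,z^I\overline{z}^J}V^*=K_{\mathcal{E}_S\otimes\mathcal{E},z^I\overline{z}^J}$ for every $I,J\in\mathbb{N}_0$. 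Lemma \ref{lem3.1.1} expands the right side as $K_{S,z^I\overline{z}^J}\otimes I_{n_2}+I_{n_1}\otimes K_{\mathcal{E},z^I\overline{z}^J}$, which is precisely hypothesis (\ref{07}) of Theorem \ref{thm3.2} specialized to $m=1$ and $\mathbb{B}_m=\mathbb{D}$. Theorem \ref{thm3.2} then yields $T\sim_s\bigoplus_1^{n_2}S$.

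For ($\Rightarrow$), let $X\colon\mathcal{H}_T\to\mathcal{H}_K\otimes\mathbb{C}^{n_2}$ be the invertible intertwiner of the similarity. Combining $\mathcal{E}_T\sim_u\mathcal{E}_S\otimes\mathcal{E}$ with the congruence result of \cite{CD3} produces a Hilbert space unitary $U\colon\mathcal{H}_T\to\mathcal{N}$, where $\mathcal{N}\subset\mathcal{H}_K\otimes E$ is the closed span of the fibers of $\mathcal{E}_S\otimes\mathcal{E}$ inside an ambient realization of $\mathcal{E}$. The composition $B:=UX^{-1}\colon\mathcal{H}_K\otimes\mathbb{C}^{n_2}\to\mathcal{H}_K\otimes E$ is bounded with closed range $\mathcal{N}$ and satisfies $B(M_z^*\otimes I_{n_2})=(M_z^*\otimes I_E)B$. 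Using $\text{Mult}(\mathcal{H}_K)=H^\infty(\mathbb{D})$ (compare Lemma \ref{lem3.1.1.1}), this intertwining identifies $B=M_{(F^{\#})^*}^*$ for some $F\in H^\infty_{\mathbb{C}^{n_2}\to E}(\mathbb{D})$. Lemma \ref{lem3.1} recovers a frame $f_j(w):=F(w)\sigma_j$ of $\mathcal{E}$ from the action $B(K(\cdot,\overline{w})\widetilde{\sigma}_i\otimes\sigma_j)=K(\cdot,\overline{w})\widetilde{\sigma}_i\otimes f_j(w)$, and invertibility of $X$ translates into $F(w)^*F(w)\geq\delta^2 I$ pointwise for some $\delta>0$.

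It remains to construct $G$. By Nikolski's Lemma, a left inverse $G\in H^\infty_{E\to\mathbb{C}^{n_2}}(\mathbb{D})$ exists once one supplies an analytic projection onto $F(w)\mathbb{C}^{n_2}=\mathcal{E}(w)$; such a projection is built from the invertibility of $B$ onto $\mathcal{N}$ together with the classical corona theorem for $H^\infty(\mathbb{D})$. The required identity $G^{\#}(\overline{w})F^{\#}(\overline{w})=I$ on $\mathbb{C}^{n_2}$ then follows from $X^{-1}X=I$ applied to fiber vectors. The main obstacle is precisely this last construction: the naive extension of $B^{-1}$ from $\mathcal{N}$ to $\mathcal{H}_K\otimes E$ via orthogonal projection does not intertwine $M_z^*$'s and hence fails to be a multiplier, so the argument must pass through Nikolski's Lemma, which is where the single-variable setting and the corona theorem for $H^\infty(\mathbb{D})$ play an essential role.
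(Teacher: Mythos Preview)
Your proof is correct and follows essentially the same route as the paper: sufficiency via Theorem \ref{thm3.2}, necessity by identifying the intertwiner (after composing with the bundle unitary) as $M^*_{(F^{\#})^*}$, reading off $F^*(w)F(w)\geq\delta^2$, and then producing the left inverse $G$ via a corona-type result on $\mathbb{D}$.

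The one place where you take a detour is the construction of $G$. You invoke Nikolski's Lemma and promise an analytic projection ``built from the invertibility of $B$ onto $\mathcal{N}$ together with the classical corona theorem,'' but you never actually build it, and that step is as hard as the original problem. The paper bypasses this: once $F\in H^\infty_{\mathbb{C}^{n_2}\to E}(\mathbb{D})$ satisfies $F^*(w)F(w)\geq\delta^2$, the operator corona theorem with \emph{finite-dimensional} source (Fuhrmann, Tolokonnikov, Rosenblum, Uchiyama; see the references \cite{Fuhrmann,R,Tolokonnikov1980,Uchiyama1980}) directly yields $G\in H^\infty_{E\to\mathbb{C}^{n_2}}(\mathbb{D})$ with $GF\equiv I$, and then $G^{\#}F^{\#}\equiv I$ is immediate from the definition of $\#$, not from ``$X^{-1}X=I$ applied to fiber vectors'' as you write. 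Since $\text{Mult}(\mathcal{H}_K)=H^\infty(\mathbb{D})$, the multiplier conditions on $(F^{\#})^*$ and $(G^{\#})^*$ are automatic from membership in $H^\infty$. So your argument closes, but the cleaner path is to cite the finite-source operator corona theorem directly rather than to route through Nikolski's Lemma.
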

\begin{proof}
Suppose first that there exists a bounded invertible operator $X$ such that
$X(\bigoplus\limits_{i=1}^{n_2}S)=TX.$
Then
$X:\mathcal{E}_{S}\otimes \mathbb{C}^{n_2}\longrightarrow \mathcal{E}_{S}\otimes \mathcal{E}$
satisfies
$$
X(K(\cdot,\overline{w}) \widetilde{\sigma}_i \otimes \sigma_{j})=K(\cdot,\overline{w}) \widetilde{\sigma}_i \otimes f_{j}(w),
$$
for a holomorphic frame $\{f_{1},\cdots,f_{n_2}\}$ of $\mathcal{E}$ and orthonormal bases $\{\widetilde{\sigma}_j\}_{i=1}^{n_1}$ and $\{\sigma_{j}\}_{j=1}^{n_2}$ for $\mathbb{C}^{n_1}$ and for $\mathbb{C}^{n_2}$, respectively. Note that since $X$ is a bounded linear operator, the $f_{i}$ are uniformly bounded on $\mathbb{D}.$
Thus, we can define a function $F\in H^{\infty}_{\mathbb{C}^{n_2}\rightarrow E}(\mathbb{D})$ as
$$
F(w)\sigma_{j}=f_{j}(w),\quad 1\leq j \leq n_2,
$$
where $E=\bigvee\limits_{w\in\mathbb{D}}\{f_{j}(w):1\leq j \leq n_2\}.$
Obviously, the function $F^{\#}$ defined on $\mathbb{D}$ as $F^{\#}(w):=F(\overline{w})$ is such that
$
(F^{\#})^ * \in H^{\infty}_{E \rightarrow\mathbb{C}^{n_2}}(\mathbb{D}).
$
Moreover, since
$\text{Mult}(\mathcal{H}_{K}\otimes E,\mathcal{H}_{K}\otimes \mathbb{C}^{n_2})= H^{\infty}_{E \rightarrow\mathbb{C}^{n_2}}(\mathbb{D}),
$
$$(F^{\#})^ *\in \text{Mult}(\mathcal{H}_{K}\otimes E,\mathcal{H}_{K}\otimes \mathbb{C}^{n_2}).$$
Next, note from Theorem
\ref{thm3.2} that
$$\begin{array}{llll}
M^*_{(F^{\#})^*}(K(\cdot,\overline{w})\widetilde{\sigma_i}\otimes \sigma_{j})&=&
K(\cdot,\overline{w})\widetilde{\sigma}_i \otimes F^{\#}(\overline{w})\sigma_{j}\\
&=&K(\cdot,\overline{w})\widetilde{\sigma}_i \otimes F(w)\sigma_{j}\\
&=&K(\cdot,\overline{w}) \widetilde{\sigma}_i \otimes f_{j}(w),
\end{array}$$
and that the operator $M^*_{(F^{\#})^*}$ has dense range.
Since $X$ is invertible, this means that
$X=M^*_{(F^{\#})^*}$. Furthermore,
for every $h=K(\cdot,\overline{w})\xi\in\mathcal{H}_{K}$, $g \in \mathbb{C}^{n_2}$ and $w\in\mathbb{D},$ there exists a $\delta> 0$ such that
$$\begin{array}{llll}
\langle X^*X (h \otimes g), h \otimes g \rangle&=&
\Big\langle M_{(F^{\#})^*}M^*_{(F^{\#})^*}(h \otimes g), h \otimes g \Big \rangle\\
&=&\|h\|^2 \langle F^*(w)F(w)g, g \rangle\\
&\geq&\delta ^2 \|g\|^2\|h\|^2.
\end{array}$$
It follows that since $F\in H^{\infty}_{\mathbb{C}^{n_2}\rightarrow E}(\mathbb{D})$ and $F^*(w)F(w)\geq \delta^{2}>0$, there exists a function $G\in H^{\infty}_{E\rightarrow \mathbb{C}^{n_2}}(\mathbb{D})$ such that
$G(w)F(w)\equiv I$, that is,  $G^{\#}(\overline{w})F^{\#}(\overline{w})=I$, for every $w \in \mathbb{D}$.

Conversely, if the complex bundle $\mathcal{E}$ satisfies condition $(\mathbf{C})$, then there is a holomorphic frame $\{f_{1},\cdots,f_{n_2}\}$ of $\mathcal{E}$ and functions $F\in H^{\infty}_{\mathbb{C}^{n_2}\rightarrow E}(\mathbb{D})$ and $G\in H^{\infty}_{E\rightarrow\mathbb{C}^{n_2}}(\mathbb{D})$ such that
$$
F(w)\sigma_{i}=f_{i}(w) \quad \text{and} \quad G^{\#}(\overline{w})F^{\#}(\overline{w})\equiv I,
$$
for all $w \in \mathbb{D}$, $1 \leq i \leq n_2$, and an orthonormal basis $\{\sigma_1, \cdots, \sigma_{n_2}\}$ of $\mathbb{C}^{n_2}$.
Another application of Theorem \ref{thm3.2} yields a bounded invertible operator $M^*_{(F^{\#})^*}: \left(\mathcal{E}_{\oplus_{i=1}^{n_2} S}\right)(w) \rightarrow (\mathcal{E}_S \otimes \mathcal{E})(w)$. Now, since $\mathcal{E}_{T}\sim_{u}\mathcal{E}_{S}\otimes \mathcal{E}$, there is a unitary operator $U$ such that
$U\Big(\big(\mathcal{E}_{S}\otimes \mathcal{E}\big)(w)\Big)=\mathcal{E}_{T}(w)$ for $w \in \mathbb{D}.$
Then, $UM^*_{(F^{\#})^*}$ is a bounded invertible operator establishing the similarity between
$T$ and $\bigoplus\limits_{i=1}^{n_2}S$.
\end{proof}

The Dirichlet space $\mathcal{D}$ consists of all analytic functions $f(z)=\sum\limits_{n=0}^{\infty}a_{n}z^{n}$ defined on the unit disk $\mathbb{D} \subset \mathbb{C}$ satisfying
$\Vert f\Vert=\sum\limits_{n=0}^{\infty}(n+1)\vert a_{n} \vert^{2}<\infty.$
It is well-known that the reproducing kernel of  $\mathcal{D}$ is given as $K(z,w)=\frac{1}{\overline{w}z}\log\frac{1}{1-\overline{w}z}$, for $w, z \in \mathbb{D}$. Research on similarity on the Dirichlet space $\mathcal{D}$ can be found in \cite{HKK2016}, for instance. Using the results of \cite{Luo2022}, we now give a sufficient condition for a Cowen-Douglas operator to be similar to $M_z^*$ on $\mathcal{D}$.

\begin{corollary}
Let $T\in \mathcal{B}_{1}^{1}(\mathbb{D})$ and consider the operator $M_{z}^{*}\in \mathcal{B}_{1}^{1}(\mathbb{D})$ on the Dirichlet space $\mathcal{D}$.
Suppose that $\mathcal{E}_{T}= \mathcal{E}_{M_{z}^{*}}\otimes \mathcal{E}$ for some Hermitian holomorphic vector bundle $\mathcal{E}$ over $\mathbb{D}$, where $\mathcal{E}(w)=\bigvee f(w)$ and
$f(w)=(f_{1}(w),\cdots,f_n(w))^{T}$ for $f_{j}\in H^{\infty}(\mathbb{D})$. If
$$\int_{\mathbb{D}}\vert f(z)\vert^{2}\vert f_{j}^{'}(z)\vert^{2}U(z)dA(z)\lesssim\Vert f\Vert^{2}_{\mathcal{D}},\quad f\in\mathcal{D},$$
where
$$U(z)=\displaystyle\int_{\mathbb{T}}\log\left\vert\frac{1-\overline{w}z}{z-w}\right\vert^{2}\frac{\vert dw\vert}{2\pi(1-\vert w\vert^{2})}+\int_{\mathbb{T}}\frac{1-\vert z\vert^{2}}{\vert1-\overline{w}z\vert^{2}}\frac{\vert dw\vert}{2\pi}, \quad z \in \mathbb{D},$$
and $dA$ is the normalized area measure, then $T \sim_{s} M_{z}^{*}.$
\end{corollary}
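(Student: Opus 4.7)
The strategy is to invoke Theorem \ref{thm3.2} with $n = n_1 = n_2 = 1$, for which we must verify that the (line) bundle $\mathcal{E}$ satisfies condition $(\mathbf{C})$ via the Dirichlet space $\mathcal{D}$. Since $\mathcal{E}(w) = \bigvee f(w)$ is generated by the non-vanishing vector-valued section $f$ whose components $f_j$ are bounded holomorphic, condition $(\mathbf{C})$ reduces to producing multipliers $g_1,\ldots,g_n \in \text{Mult}(\mathcal{D})$ with $\sum_j g_j f_j \equiv 1$ on $\mathbb{D}$, together with confirming that the associated multiplication operators act boundedly on the appropriate vector-valued Dirichlet spaces. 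Once this is in hand, the proof proceeds exactly as in the Remark following Theorem \ref{thm3.2}.

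The first step is to establish that each $f_j$ is a multiplier of $\mathcal{D}$. A function $\phi \in H^{\infty}(\mathbb{D})$ belongs to $\text{Mult}(\mathcal{D})$ precisely when the measure $|\phi'(z)|^{2}\,dA(z)$ is a Dirichlet-Carleson measure, and the results of \cite{Luo2022} encode this via the integral inequality $\int_{\mathbb{D}} |h(z)|^{2}|\phi'(z)|^{2} U(z)\,dA(z) \lesssim \|h\|_{\mathcal{D}}^{2}$, where the weight $U$ is the one matched to the reproducing kernel $K(z,w)=\frac{1}{\overline{w}z}\log\frac{1}{1-\overline{w}z}$. Applying this characterization component-wise to the hypothesis of the corollary yields $f_{j}\in \text{Mult}(\mathcal{D})$ for all $1\leq j\leq n$.

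The second step is to solve the corona-type Bezout equation. Since $\mathcal{E}$ is a line bundle and $f(w)$ spans $\mathcal{E}(w)$ at each point, the section is uniformly bounded away from zero, so $\sum_j |f_j(w)|^2 \geq \delta^2 > 0$ on $\mathbb{D}$. Combining this lower bound with the multiplier membership from the first step and invoking the corona theorem for $\text{Mult}(\mathcal{D})$ supplied by \cite{Luo2022}, we produce $g_{1},\ldots,g_{n}\in \text{Mult}(\mathcal{D})$ such that $\sum_{j} g_{j}(w)f_{j}(w) = 1$ for every $w\in\mathbb{D}$.

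The final step is to unpack condition $(\mathbf{C})$ following the template of the Remark after Theorem \ref{thm3.2}. Setting $F(w)(\lambda) = \lambda f(w)$ with $E=\bigvee_{w\in\mathbb{D}} f(w)$, and $G(w) = (g_{1}(w),\ldots,g_{n}(w))$, the multiplier membership of the $f_{j}$ and $g_{j}$ promotes $(F^{\#})^{*}$ and $(G^{\#})^{*}$ to bounded operators on the relevant vector-valued Dirichlet spaces, while the Bezout relation gives $G^{\#}(\overline{w})F^{\#}(\overline{w})=I$ throughout $\mathbb{D}$. Hence $\mathcal{E}$ satisfies condition $(\mathbf{C})$ via $\mathcal{D}$, and Theorem \ref{thm3.2} delivers $T\sim_{s} M_{z}^{*}$. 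The delicate point is the second step: the corona theorem for $\text{Mult}(\mathcal{D})$ is not as classical as Carleson's theorem for $H^{\infty}$, and it is precisely the role of \cite{Luo2022} — whose weight $U$ is tuned to the Dirichlet reproducing kernel — to make both the multiplier characterization and the Bezout solvability available in this setting.
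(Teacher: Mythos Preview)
Your overall strategy matches the paper's intended argument: the paper states this corollary without proof, relying on the reader to combine the corona theorem for $\text{Mult}(\mathcal{D})$ from \cite{Luo2022} with the machinery of Theorem~\ref{thm3.2} and the Remark following it, exactly as you outline.

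There is, however, a genuine gap in your second step. You write that ``since $\mathcal{E}$ is a line bundle and $f(w)$ spans $\mathcal{E}(w)$ at each point, the section is uniformly bounded away from zero, so $\sum_j |f_j(w)|^2 \geq \delta^2 > 0$ on $\mathbb{D}$.'' This inference is not valid: pointwise non-vanishing of a holomorphic section on $\mathbb{D}$ does not yield a uniform lower bound (consider $f(w)=1-w$). The corona theorem for $\text{Mult}(\mathcal{D})$ in \cite{Luo2022}, like Carleson's theorem, requires the Bezout lower bound $\inf_{w\in\mathbb{D}}\sum_j|f_j(w)|^2>0$ as an explicit hypothesis; it is not a consequence of the other assumptions in the corollary. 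Compare the preceding corollary in the paper, which explicitly assumes ``$f$ is bounded by positive constants,'' and the Remark after Theorem~\ref{thm3.2}, which explicitly posits the existence of such a $\delta$. Either this lower bound should be added as a hypothesis here, or you must supply an independent argument for it; as written, your justification does not stand.
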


In order to investigate similarity between tuples of irreducible operators in the Cowen-Douglas class $\mathcal{B}_{n}^{m}(\mathbb{B}_{m})$, we introduce a subclass denoted $\mathcal{F}\mathcal{B}_{n}^{m}(\mathbb{B}_{m})$. The subclass $\mathcal{F}\mathcal{B}_{n}^{m}(\mathbb{B}_{m})$ is the collection of all $\mathbf{T}=(T_{1},\cdots,T_{m})\in \mathcal{B}_{n}^{m}(\mathbb{B}_{m})$ of the form
 $$\mathbf{T}=\left( \begin{scriptsize}\begin{pmatrix}
T_{1,1}&\widetilde{T}_{1,2}&\cdots&\widetilde{T}_{1,n-1}&\widetilde{T}_{1,n} \\
0&T_{1,2}&\cdots&\widetilde{T}_{2,n-1}&\widetilde{T}_{2,n} \\
\vdots&\vdots&\ddots&\vdots&\vdots\\
0&0&\cdots&T_{1,n-1}&\widetilde{T}_{n-1,n}\\
0&0&\cdots&0&T_{1,n}\\
\end{pmatrix}\end{scriptsize},\cdots,\begin{scriptsize}\begin{pmatrix}
T_{m,1}&\widetilde{T}_{1,2}&\cdots&\widetilde{T}_{1,n-1}&\widetilde{T}_{1,n} \\
0&T_{m,2}&\cdots&\widetilde{T}_{2,n-1}&\widetilde{T}_{2,n} \\
\vdots&\vdots&\ddots&\vdots&\vdots\\
0&0&\cdots&T_{m,n-1}&\widetilde{T}_{n-1,n}\\
0&0&\cdots&0&T_{m,n}\\
\end{pmatrix}\end{scriptsize}\right),$$
where $\mathbf{T}_{i}=(T_{1,i},\cdots,T_{m,i})\in\mathcal{B}_{1}^{m}(\mathbb{B}_{m})$ and the $\widetilde{T}_{i,i+1}$, $1\leq i\leq n-1$, are non-zero operators such that $T_{k,i}\widetilde{T}_{i,i+1}=\widetilde{T}_{i,i+1}T_{k,i+1}$ for all $1\leq k\leq m$. The class $\mathcal{FB}_{n}^{1}(\Omega)$ was defined in \cite{JJDG}, and in \cite{JKSX,JJD}, the corresponding similarity question was considered.
 We now give a sufficient condition for the similarity between $\mathbf{T}$ and $\mathbf{S}$, where
$$\mathbf{S}=\left(\begin{scriptsize}\begin{pmatrix}
S_{1,1}&\widetilde{S}_{1,2}&\cdots&\widetilde{S}_{1,n-1}&\widetilde{S}_{1,n} \\
0&S_{1,2}&\cdots&\widetilde{S}_{2,n-1}&\widetilde{S}_{2,n} \\
\vdots&\vdots&\ddots&\vdots&\vdots\\
0&0&\cdots&S_{1,n-1}&\widetilde{S}_{n-1,n}\\
0&0&\cdots&0&S_{1,n}\\
\end{pmatrix}\end{scriptsize},\cdots,\begin{scriptsize}\begin{pmatrix}
S_{m,1}&\widetilde{S}_{1,2}&\cdots&\widetilde{S}_{1,n-1}&\widetilde{S}_{1,n} \\
0&S_{m,2}&\cdots&\widetilde{S}_{2,n-1}&\widetilde{S}_{2,n} \\
\vdots&\vdots&\ddots&\vdots&\vdots\\
0&0&\cdots&S_{m,n-1}&\widetilde{S}_{n-1,n}\\
0&0&\cdots&0&S_{m,n}\\
\end{pmatrix}\end{scriptsize}\right)$$
is in $\mathcal{F}\mathcal{B}_n^m(\mathbb{B}_{m})$ and $\mathbf{S}_{i}=(S_{1,i},\cdots,S_{m,i})$ is in $\mathcal{B}^{m}_1(\mathbb{B}_{m})$
for $1 \leq i \leq n$.

\begin{theorem}\label{thm3.4}
Let $\mathbf{T}, \mathbf{S} \in \mathcal{FB}^{m}_n(\mathbb{B}_{m})$, where $\mathbf{S}_{i} \sim_{u} ({\mathbf{M}}^*_{z}, \mathcal{H}_{k_i})$ for some $k_{i} >  m+1$ and for all $1\leq i\leq n$.
Suppose that the following conditions hold:
\begin{enumerate}
\item[(1)]$\mathcal{K}_{\mathbf{S}_{1}}(w)-\mathcal{K}_{\mathbf{T}_{1}}(w)=\sum \limits_{i,j=1}^{m}\frac{\partial^{2}\psi(w)}{\partial w_{i}\partial \overline{w}_{j}}dw_{i}\wedge d\overline{w}_{j},$
where $\psi(w)=\log \Vert f(w)\Vert ^{2}$ for some analytic vector valued function $f$ over $\mathbb{B}_m$.
\item [(2)] Condition $(\mathbf{C})$ holds for the Hermitian holomorphic vector bundle $\mathcal{E}$, with $\mathcal{E}=\bigvee f(w)$, via $\mathcal{H}_{k_i}$, $1\leq i\leq n$.
\item[(3)] There exist functions $\{\phi_i\}^{n-1}_{i=1} \subset GL({H}^{\infty}(\mathbb{B}_{m}))$ such that for all $1 \leq i < j \leq n$ and $w \in \mathbb{B}_{m}$,
$$\prod\limits_{k=i}^{j-1}\vert\phi_k(w)\vert^2\frac{\langle\widetilde{ T}_{i,j}t_j(w), t_i(w)\rangle}{\|t_{j}(w)\|^2}=\frac{\langle \widetilde{S}_{i,j} K_j(\cdot,\overline{w}), { K}_i(\cdot,\overline{w})\rangle}{\|{K}_{j}(\cdot,\overline{w})\|^2},$$
where $t_{n}(w) \in \ker(\mathbf{T}_{n}-w)$, $K_{n}(\cdot,\overline{w})\in\ker(\mathbf{S}_{n}-w),$
$t_{i}(w)=\widetilde{T}_{i, i+1}t_{i+1}(w)$, and ${K}_{i}(\cdot,\overline{w})=\widetilde{S}_{i,i+1}{K}_{i+1}(\cdot,\overline{w})$ for $1 \leq i \leq n-1.$
\item[(4)]$T_{k,i}\widetilde{T}_{i, j}=\widetilde{T}_{i, j}T_{k,j}$ and $S_{k,i}\widetilde{S}_{i, j}=\widetilde{S}_{i, j}S_{k,j}$ for all $1\leq i < j \leq n$ and $1\leq k\leq m$.
\end{enumerate}
Then $\mathbf{S}\sim_{s}\mathbf{T}.$
\end{theorem}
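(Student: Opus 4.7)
The strategy is to construct an invertible, block upper-triangular operator $X=[X_{ij}]_{i,j=1}^n$ that intertwines $\mathbf{T}_k$ and $\mathbf{S}_k$ for every $1\le k\le m$, with diagonal entries $X_{ii}=Y_i$ giving similarities $Y_i\mathbf{T}_i=\mathbf{S}_iY_i$ and strictly upper-triangular entries $X_{ij}$ reconciling the off-diagonal blocks $\widetilde{T}_{i,j}$ with $\widetilde{S}_{i,j}$. I would first handle the diagonal by showing that the curvature identity in (1) propagates from the pair $(\mathbf{T}_1,\mathbf{S}_1)$ to every pair $(\mathbf{T}_i,\mathbf{S}_i)$. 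Specializing condition (3) to $j=i+1$ and using $t_i(w)=\widetilde{T}_{i,i+1}t_{i+1}(w)$ together with $K_i(\cdot,\overline w)=\widetilde{S}_{i,i+1}K_{i+1}(\cdot,\overline w)$ yields
$$
|\phi_i(w)|^2\,\frac{\|t_i(w)\|^2}{\|t_{i+1}(w)\|^2}=\frac{\|K_i(\cdot,\overline w)\|^2}{\|K_{i+1}(\cdot,\overline w)\|^2},
$$
and because $\log|\phi_i|^2$ is pluriharmonic, applying $\partial_{w_p}\partial_{\overline w_q}\log$ to both sides gives $\mathcal{K}_{\mathbf{S}_{i+1}}-\mathcal{K}_{\mathbf{T}_{i+1}}=\mathcal{K}_{\mathbf{S}_i}-\mathcal{K}_{\mathbf{T}_i}$, so by induction every difference equals $\partial\overline\partial\psi$. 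Since $\psi=\log\|f(w)\|^2$, the bundles $\mathcal{E}_{\mathbf{T}_i}$ and $\mathcal{E}_{\mathbf{S}_i}\otimes\mathcal{E}$ are congruent for each $i$, and the condition $(\mathbf{C})$ hypothesis in (2) lets me invoke Theorem \ref{thm3.2} to obtain bounded invertible $Y_i:\mathcal{H}_i\to\mathcal{H}_{k_i}$ with $Y_i\mathbf{T}_i=\mathbf{S}_iY_i$ for every $i$.

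For the off-diagonal entries, the full condition (3) records exactly how the matrix coefficients of $\widetilde{T}_{i,j}$ with respect to the frame $\{t_k\}$ compare with those of $\widetilde{S}_{i,j}$ with respect to $\{K_k(\cdot,\overline w)\}$, modulated by the holomorphic correction $\prod_{k=i}^{j-1}\phi_k$. Using $\phi_k\in GL(H^{\infty}(\mathbb{B}_m))$ and the identity $\text{Mult}(\mathcal{H}_{k_i})=H^{\infty}(\mathbb{B}_m)$ valid for $k_i>m+1$, I would lift these pointwise identities to bounded invertible multiplication operators, and then define $X_{ij}$ so that, on the eigenvector bundles, $X_{ij}$ carries the pertinent combinations of $\phi_k$ and $Y_\ell$ from the $\mathbf{T}$-side frame to the $\mathbf{S}$-side frame. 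The commutation relations in (4) are then used to verify that the entrywise identities assemble into the single intertwining equation $X\mathbf{T}_k=\mathbf{S}_k X$ for every $k$, and invertibility of $X$ is automatic from the block upper-triangular form with invertible diagonals $Y_i$.

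The main obstacle will be showing that the off-diagonal prescription given by condition (3) actually promotes from a pointwise identity on eigenvectors to a bounded operator $X_{ij}:\mathcal{H}_j\to\mathcal{H}_{k_i}$ of controlled norm. This is exactly where the hypotheses $\phi_k\in GL(H^{\infty}(\mathbb{B}_m))$ and $\text{Mult}(\mathcal{H}_{k_i})=H^{\infty}(\mathbb{B}_m)$ become indispensable: they convert the scalar correction factors into honest multiplication operators, and the invertibility of each $\phi_k$ is needed so that $X_{ij}$ can be read off in a single direction rather than only up to a quasi-inverse. A secondary subtlety is the bookkeeping of the telescoping product $\prod_{k=i}^{j-1}\phi_k$ when passing from the diagonal step (length one) to the longer blocks; it must be organized so that the resulting intertwining identity closes up cleanly under the actions of $\widetilde{T}_{i,j}$ and $\widetilde{S}_{i,j}$, and this is precisely what condition (4) is designed to guarantee.
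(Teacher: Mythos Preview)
Your diagonal step matches the paper's: specializing (3) to $j=i+1$ gives the norm-ratio identity, $\log|\phi_i|^2$ is pluriharmonic, and the curvature difference propagates from $(\mathbf{T}_1,\mathbf{S}_1)$ to every pair $(\mathbf{T}_i,\mathbf{S}_i)$. Where your plan diverges is the shape of the intertwiner: you aim for a block upper-triangular $X=[X_{ij}]$ and then flag the construction of the off-diagonal $X_{ij}$ as the main obstacle. The paper's intertwiner is block \emph{diagonal}, and that obstacle never arises.

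The idea you are missing is that the diagonal similarities can be \emph{coordinated} so that the off-diagonal blocks match automatically. The paper first transforms $\mathbf{T}$ unitarily by $W=\mathrm{diag}(W_1,\dots,W_n)$, where $W_1t_1(w)=K_1(\cdot,\overline w)\otimes f(w)$ and $W_it_i(w)=\prod_{k=1}^{i-1}\phi_k(w)\,K_i(\cdot,\overline w)\otimes f(w)$ for $i\ge 2$. It then takes $X=\mathrm{diag}(X_1,\dots,X_n)$ with $X_iK_i(\cdot,\overline w)=g(w)\prod_{k=1}^{i-1}\phi_k(w)\,K_i(\cdot,\overline w)\otimes f(w)$ for a \emph{single} holomorphic $g$ produced by condition~$(\mathbf{C})$; the same $f$, the same $g$, and the telescoping scalars $\prod\phi_k$ are used for every $i$. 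Condition~(4) gives holomorphic scalars with $\widetilde{T}_{i,j}t_j=\phi_{i,j}t_i$ and $\widetilde{S}_{i,j}K_j=\varphi_{i,j}K_i$, and the full condition~(3) combined with the norm-ratio identity forces $\phi_{i,j}=\varphi_{i,j}$. A direct computation on sections then yields $X_i\widetilde{S}_{i,j}=W_i\widetilde{T}_{i,j}W_j^{*}X_j$, so the conjugated off-diagonal blocks of $\mathbf{T}$ already equal those of $\mathbf{S}$. By contrast, simply invoking Theorem~\ref{thm3.2} $n$ times gives some $Y_i$ with no a~priori compatibility across $i$, and you would then genuinely have to manufacture bounded $X_{ij}$ from the pointwise data in~(3)---precisely the difficulty the paper's coordinated choice of diagonal eliminates.
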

\begin{proof}
We can assume from condition (1) that $\mathcal{E}_{\mathbf{T}_{1}}= \mathcal{E}_{\mathbf{S}_{1}}\otimes \mathcal{E}$, where $\mathcal{E}(w)=\bigvee f(w)$ for analytic vector valued function $f$ over $\mathbb{B}_m$. Furthermore,
$\Vert t_1(w)\Vert=\Vert K_1(\cdot,\overline{w})\otimes f(w)\Vert$ for some $t_1(w)\in \ker(\mathbf{T}_1-w)$.
Note also that if we let $j=i+1$ in condition (3), then
\begin{equation}\label{equ2}
\vert\phi_i(w)\vert^2\frac{\|t_i(w)\|^2}{\|t_{i+1}(w)\|^2}
=\frac{\|K_i(\cdot,\overline{w})\|^2}{\|K_{i+1}(\cdot,\overline{w})\|^2}.
\end{equation}
 Thus, for $2 \leq i \leq n$,
 $$\begin{array}{llll}
\Vert t_i(w)\Vert^{2}&=&
\frac{\prod\limits_{k=1}^{i-1}\vert\phi_k(w)\vert^2\Vert K_i(\cdot,\overline{w})\Vert^{2}\Vert t_1(w)\Vert^{2}}{\Vert K_1(\cdot,\overline{w})\Vert^{2}}\\
&=&\frac{\prod\limits_{k=1}^{i-1}\vert\phi_k(w)\vert^2\Vert K_i(\cdot,\overline{w})\Vert^{2}\Vert  K_1(\cdot,\overline{w})\otimes f(w)\Vert^{2}}{\Vert K_1(\cdot,\overline{w})\Vert^{2}}\\
&=&\prod\limits_{k=1}^{i-1}\vert\phi_k(w)\vert^2\Vert K_i(\cdot,\overline{w})\Vert^{2}\Vert f(w)\Vert^{2}.
\end{array}$$

By using the Rigidity Theorem given in \cite{CD}, we now define the isometries $W_i$, $1 \leq i \leq n$, by
\begin{equation}\label{05}
W_1t_1(w):=K_1(\cdot,\overline{w})\otimes f(w) \,\,\text{ and }\,\,
W_it_i(w):=\prod\limits_{k=1}^{i-1}\phi_k(w) K_i(\cdot,\overline{w})\otimes f(w).
\end{equation}
Then, for $1 \leq i \leq m$,
\begin{equation}\label{equ3}
\begin{small}\begin{pmatrix}
T_{i,1}&\widetilde{T}_{1,2}&\cdots&\widetilde{T}_{1,n} \\
0&T_{i,2}&\cdots&\widetilde{T}_{2,n} \\
\vdots&\vdots&\ddots&\vdots\\
0&0&\cdots&T_{i,n}\\
\end{pmatrix}\end{small}\sim_{u}
\begin{small}\begin{pmatrix}
M^*_{z_{i}}\vert_{\mathcal{N}_1}&W_1\widetilde{T}_{1,2}W^*_2&\cdots&W_1\widetilde{T}_{1,n}W^*_n \\
0&M^*_{z_{i}}\vert_{\mathcal{N}_2}&\cdots&W_2\widetilde{T}_{2,n}W^*_n \\
\vdots&\vdots&\ddots&\vdots\\
0&0&\cdots&M^*_{z_{i}}\vert_{\mathcal{N}_n}\\
\end{pmatrix}\end{small},
\end{equation}
where $\mathcal{N}_j=\overline{\text{ran} W_j}$ for $1\leq j\leq n$.
Moreover, $$\ker(\mathbf{M}^*_z\vert_{\mathcal{N}_1}-w)=\bigvee \left\{K_1(\cdot, \overline{w})\otimes f(w)\right \}$$
and
$$\ker(\mathbf{M}^*_z\vert_{\mathcal{N}_j}-w)=\bigvee \left\{\prod\limits_{k=1}^{j-1}\phi_k(w)K_j(\cdot, \overline{w})\otimes f(w)\right\},$$ for $2\leq j\leq n$ and $w \in \mathbb{B}_{m}$.
Since $\mathbf{S}_{i} \sim_u (\mathbf{M}^*_{z}, \mathcal{H}_{k_i})$ for $1\leq i\leq n$,
$$\mathcal{K}_{\mathbf{S}_{i}}-\mathcal{K}_{\mathbf{T}_{i}}
=\mathcal{K}_{\mathbf{S}_{i}}-\mathcal{K}_{\mathbf{M}^*_z\vert_{\mathcal{N}_i}}=
\mathcal{K}_{\mathbf{S}_{i}}-(\mathcal{K}_{\mathbf{S}_{i}}+\mathcal{K}_{\mathcal{E}})=\sum \limits_{i,j=1}^{m}\frac{\partial^{2}\psi(w)}{\partial w_{i}\partial \overline{w}_{j}}dw_{i}\wedge d\overline{w}_{j},$$
where $\psi(w)=\log \Vert f(w)\Vert ^{2}$.
Next, by condition (2) and Lemma \ref{lem3.1},
there is a multiplier $(F^{\#})^{*}\in\text{Mult}(\mathcal{H}_{k_{i}}\otimes E,\mathcal{H}_{k_{i}}\otimes\mathbb{C})$ with $F\in H^{\infty}_{\mathbb{C}\rightarrow E}(\mathbb{B}_{m})$ that satisfies
$$M^*_{(F^{\#})^*}(K_{i}(\cdot,\overline{w})\otimes \lambda)=K_{i}(\cdot,\overline{w})\otimes F^{\#}(\overline{w})(\lambda)=K_{i}(\cdot,\overline{w})\otimes \lambda f(w),\quad\lambda\in\mathbb{C}, $$
 and a $G\in H^{\infty}_{E\rightarrow\mathbb{C}}(\mathbb{B}_{m})$ so that
$$\begin{array}{llll}
M^*_{(G^{\#})^*}M^*_{(F^{\#})^*}(K(\cdot,\overline{w})\otimes 1)&=&
M^*_{(G^{\#})^*}(K_{i}(\cdot,\overline{w})\otimes F^{\#}(\overline{w})(1))\\
&=&K_{i}(\cdot,\overline{w})\otimes G^{\#}(\overline{w})F^{\#}(\overline{w})(1)\\
&=&K_{i}(\cdot,\overline{w})\otimes 1,
\end{array}$$
where $E=\bigvee\limits_{w\in\mathbb{B}_{m}}f(w).$
Then there exist invertible operators $X_i\in \mathcal{L}(\mathcal{H}_{K_i},\mathcal{N}_i),1\leq i\leq n,$ such that
 $X_i\mathbf{S}_i=\mathbf{M}^*_z\vert_{\mathcal{N}_i}X_i=W_{i}\mathbf{T}_{i}W_{i}^{\ast}X_i.$
It then follows for some $g \in \mathcal{O}(\mathbb{B}_m)$, that
\begin{equation}\label{04}
X_1K_1(\cdot,\overline w)=g(w)K_1(\cdot,\overline w)\otimes f(w)\,\,\text{and}\,\, X_iK_i(\cdot,\overline w)=g(w)\prod\limits_{k=1}^{i-1}\phi_k(w)K_i(\cdot,\overline w)\otimes f(w),
\end{equation}
for all $2\leq i\leq n$ and $w \in \mathbb{B}_m$. A direct calculation shows that for $1\leq i\leq n-1$,
$$X_i\widetilde{S}_{i,i+1}=W_i\widetilde{T}_{i, i+1}W^*_{i+1}X_{i+1}.$$

Note that $T_{k,i}\widetilde{T}_{i, j}=\widetilde{T}_{i, j}T_{k,j}$ and
$S_{k,i}\widetilde{S}_{i, j}=\widetilde{S}_{i, j}S_{k,j}$ for all $1 \leq i < j \leq n$
and $1\leq k\leq m$. Moreover, there exist functions $\phi_{i,j}, \varphi_{i,j} \in \mathcal{O}(\mathbb{B}_m)$
such that $\widetilde{T}_{i,j}t_{j}(w)=\phi_{i,j}(w)t_i(w)$ and $\widetilde{S}_{i,j}K_{j}(\cdot,\overline{w})=\varphi_{i,j}(w)K_{i}(\cdot,\overline{w}).$
Therefore, for $ 1\leq i < j \leq n$, we get from condition (3) and (\ref{equ2}) that $$\prod_{k=i}^{j-1}\vert\phi_k(w)\vert^2\frac{\phi_{i,j}(w)\|t_i(w)\|^2}{\|t_{j}(w)\|^2}
=\frac{\varphi_{i,j}(w)\| K_i(\cdot,\overline{w})\|^2}{\|K_{j}(\cdot,\overline{w})\|^2}$$
and
$$\prod_{k=i}^{j-1}\vert\phi_k(w)\vert^2\frac{\|t_i(w)\|^2}{\|t_{j}(w)\|^2}=\frac{\| K_i(\cdot,\overline{w})\|^2}{\|K_{j}(\cdot,\overline{w})\|^2},$$ that is,  $\phi_{i,j}=\varphi_{i,j}$.
Moreover, by (\ref{05}) and (\ref{04}),
$$\begin{array}{llll}
W_{1}\widetilde{T}_{1,l}W^*_{l}X_{l}K_{l}(\cdot,\overline{w})&=&
W_{1}\widetilde{T}_{1,l}W^*_{l}\left(g(w)\prod\limits_{k=1}^{l-1} \phi_k(w)K_j(\cdot,\overline w)\otimes f(w)\right)\\
&=&g(w)W_{1}\widetilde{T}_{1,l}t_{l}(w)\\
&=&g(w)W_{1}(\phi_{1,l}(w)t_1(w))\\
&=&g(w)\phi_{1,l}(w)\left(K_l(\cdot, \overline{w})\otimes f(w)\right)\\
&=&X_1(\phi_{1,l}(w)K_{1}(\cdot,\overline{w}))\\
&=&X_{1}\widetilde{S}_{1,l}K_{l}(\cdot,\overline{w}),\end{array}$$
and
$$\begin{array}{llll}
W_{i}\widetilde{T}_{i,j}W^*_{j}X_{j}K_{j}(\cdot,\overline{w})&=&
W_{i}\widetilde{T}_{i,j}W^*_{j}\left(g(w)\prod\limits_{k=1}^{j-1} \phi_k(w)K_j(\cdot,\overline w)\otimes f(w)\right)\\
&=&g(w)W_{i}\widetilde{T}_{i,j}t_{j}(w)\\
&=&g(w)W_{i}(\phi_{i,j}(w)t_i(w))\\
&=&g(w)\phi_{i,j}(w)\left( \prod \limits_{k=1}^{i-1} \phi_k(w)K_i(\cdot,\overline{w})\otimes f(w)\right)\\
&=&X_i(\phi_{i,j}(w)K_{i}(\cdot,\overline{w}))\\
&=&X_{i}\widetilde{S}_{i,j}K_{j}(\cdot,\overline{w}),\end{array}
$$
for $1<l\leq n$ and $2\leq i<j\leq n$.
Hence, the operator $X:=\text{diag}(X_{1},\cdots,X_{n})$ is invertible and
$$\begin{array}{llll}
&&\begin{scriptsize}\begin{pmatrix}
M^*_{z_{i}}\vert_{\mathcal{N}_1}&W_1\widetilde{T}_{1,2}W^*_2&\cdots&W_1\widetilde{T}_{1,n}W^*_n \\
0&M^*_{z_{i}}\vert_{\mathcal{N}_2}&\cdots&W_2\widetilde{T}_{2,n}W^*_n \\
\vdots&\vdots&\ddots&\vdots\\
0& 0&\cdots&M^*_{z_i}\vert_{\mathcal{N}_n} \\
\end{pmatrix}\end{scriptsize}
\begin{scriptsize}\begin{pmatrix}
X_{1}&&&\\
&X_{2}&& \\
&&\ddots&\\
&&&X_{n}\\
\end{pmatrix}\end{scriptsize}\\
&=&\begin{scriptsize}\begin{pmatrix}
X_{1}&&&\\
&X_{2}&& \\
&&\ddots&\\
&&&X_{n}\\
\end{pmatrix}\end{scriptsize}
\begin{scriptsize}\begin{pmatrix}
S_{i,1}&\widetilde{S}_{1,2}&\cdots&\widetilde{S}_{1,n} \\
0&S_{i,2}&\cdots&\widetilde{S}_{2,n} \\
\vdots&\vdots&\ddots&\vdots\\
0&0&\cdots&S_{i,n}\\
\end{pmatrix}\end{scriptsize},
\end{array}$$
for all $1 \leq i \leq m$. From this and (\ref{equ3}), we conclude that $\mathbf{T}\sim_s \mathbf{S}$.
\end{proof}

\subsection{Uniqueness of strongly irreducible decomposition up to similarity}
When the Hilbert space $\mathcal{H}$ is finite-dimensional, the Jordan canonical form theorem indicates that every operator on $\mathcal{H}$ can be uniquely written as a direct sum of strongly irreducible operators up to similarity. Is there a corresponding analogue when one considers operators on an infinite-dimensional complex separable Hilbert space $\mathcal{H}$? The notion of a unicellular operator was introduced in \cite{MSB1956,MSB1968} and it was shown in \cite{GEK1967-2, GEK1967} that dissipative operators can be written as a direct sum of unicellular operators.
In \cite{BNF1979, BNF1975, NF1970, NF1970-3, NF1972, NF}, every $C_{0}$-operator on a complex separable Hilbert space was proven to be similar to a Jordan operator. Furthermore, in \cite{DH1990}, every bitriangular operator was shown to be quasisimilar to a Jordan operator. The concepts of strong irreducibility and of Banach irreducibility introduced in \cite{Gilfeather} and \cite{JS2006}, respectively, turned out to be equivalent. In \cite{Halmos}, the set of irreducible operators was proven to be dense in $\mathcal{L}(\mathcal{H})$ in the sense of Hilbert-Schmidt norm approximations. For the class $\mathcal{B}_{n}^{1}(\Omega)$, the work Y. Cao-J. S. Fang-C. L. Jiang \cite{CFJ2002}, C. L. Jiang \cite{J2004}, and C. L. Jiang-X. Z. Guo-K. Ji \cite{JGJ} involve the $K_{0}$-group of the commutant algebra as an invariant to show that an operator in $\mathcal{B}_{n}^{1}(\Omega)$ has a unique strong irreducible decomposition up to similarity.

Let $T\in\mathcal{B}_{1}^{1}(\mathbb{D})$ be an $n$-hypercontraction. Denote by $\mathcal{H}_{n}^1$ the Hilbert space of analytic functions on the unit disk $\mathbb{D}$ with reproducing kernel
$K(z,w)=\frac{1}{(1-z\overline{w})^{n}}, $ for $z, w \in \mathbb{D}$.
The results in \cite{DKT} and \cite{HJK} show that $\bigoplus\limits_{i=1}^{k}T$ is similar to the backward shift operator $\bigoplus\limits_{i=1}^{k}M_{z}^{*}$ on $\bigoplus\limits_{i=1}^{k}\mathcal{H}_{n}^1$ if and only if there exists a bounded subharmonic function $\varphi$ defined on $\mathbb{D}$ such that
$$\text{trace }{K}_{\bigoplus\limits_{i=1}^{k}M_{z}^{*}}(w)-\text{trace }{K}_{\bigoplus\limits_{i=1}^{k}T}(w)=k{K}_{M_{z}^{*}}(w)-k{K}_{T}(w) \leq \frac{\partial^{2}\varphi(w)}{\partial w\partial \overline{w}},\quad w\in\mathbb{D}.$$
Note here that if $\bigoplus\limits_{i=1}^{k}T\sim_{s}\bigoplus\limits_{i=1}^{k}M_{z}^{*}$, then $T\sim_{s}M_{z}^{*}$.

We start with some definitions given in \cite{Gilfeather, Halmos}.

\begin{definition}
Let $\{T\}^{\prime}=\{X\in\mathcal{L}(\mathcal{H}):XT=TX\}$ be the commutant of $T\in\mathcal{L}(\mathcal{H})$. The operator $T$
is called \emph{strongly irreducible} if $\{T\}^{\prime}$ does not have any nontrivial idempotents. It is called \emph{irreducible} if $\{T\}^{\prime}$ does not any contain nontrivial self-adjoint idempotents.
\end{definition}

\begin{definition}
Consider $T=(M^{*}_{z}, \mathcal{H}_{K}, K) \in \mathcal{B}_{n}^{1}(\Omega)$,
where $\mathcal{H}_{K}$ is an analytic function space with reproducing kernel $K$. Suppose that $T$ has a strongly irreducible decomposition, that is,
$$T=\bigoplus\limits_{i=1}^{t}T_{i}^{(r_{i})},$$
where each $T_{i}\in \mathcal{B}_{n_{i}}^{1}(\Omega)$ is strongly irreducible and $T_{i}\nsim_{s}T_{j}$ for $i\neq j$. It is said to have a \emph{unique strongly irreducible decomposition up to similarity}, if for any operator $\widetilde{T}$ that is similar to $T$ with a strongly irreducible decomposition
$$\widetilde{T}=\bigoplus\limits_{i=1}^{s}\widetilde{T}_{i}^{(l_{i})},$$
\begin{itemize}
  \item[(1)] $t=s$; and
  \item[(2)] there is a permutation $\pi$ on $\{1,2,\cdots,t\}$
such that $T_{i}\sim_{s}\widetilde{T}_{\pi(i)}$ and $r_{i}=l_{\pi(i)}, \ 1\leq i\leq t$.
\end{itemize}
\label{dingyi}
\end{definition}

\begin{proposition}\label{prop2}
Let $T=T_{1}\oplus\cdots\oplus T_{k},
S^{*}=S_{1}^{*}\oplus\cdots\oplus S_{k}^{*}\in\mathcal{B}_n^1(\Omega)$,  where $S_{i}^{*}$ is the adjoint of the operator of multiplication by $z$ on a reproducing kernel Hilbert space $\mathcal{H}_{K_{i}}$ with reproducing kernel $K_{i}$ and $S_{i}^{*}$ is strongly irreducible for $1\leq i\leq k$. Suppose that there exist an isometry $V$ and a Hermitian holomorphic vector bundle $\mathcal{E}$ over $\Omega$ for which condition $(\mathbf{C})$ holds such that
$$
VK_{{T},w^{i}\overline{w}^{j}}V^{*}=K_{{S^{*}},w^{i}\overline{w}^{j}}+K_{\mathcal{E},w^{i}\overline{w}^{j}}\otimes I_{n},\quad 0\leq i,j \leq n-1.
$$
Then, there is a permutation $\pi$ on $\{1,\cdots,k\}$ such that $S^{*}_{i}\sim_{s}T_{\pi(i)}$ for $1\leq i\leq k.$
\end{proposition}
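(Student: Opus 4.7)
The plan is to use Theorem \ref{thm3.2} to first upgrade the curvature/covariant-derivative identity to a global similarity $T \sim_s S^{*}$, and then to apply the uniqueness of strongly irreducible decomposition up to similarity for Cowen-Douglas operators to extract the permutation $\pi$.

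First, I would invoke Theorem \ref{thm3.2} with $m = 1$, $n_1 = n$, and $n_2 = 1$. Since $T, S^{*} \in \mathcal{B}_n^1(\Omega)$, these dimensions are consistent. With $n_2 = 1$ the tensor factor $I_{n_2}$ is trivial, and the bundle $\mathcal{E}$ has rank one, so each entry $K_{\mathcal{E},w^{i}\overline{w}^{j}}$ is scalar. The hypothesis of the proposition then matches equation (\ref{07}) in the form
\begin{equation*}
V K_{T,w^{i}\overline{w}^{j}} V^{*} - K_{S^{*},w^{i}\overline{w}^{j}} \otimes I_{n_2} = I_{n_1} \otimes K_{\mathcal{E},w^{i}\overline{w}^{j}}, \quad 0 \leq i,j \leq n-1.
\end{equation*}
Since $\mathcal{E}$ satisfies condition $(\mathbf{C})$, Theorem \ref{thm3.2} produces an invertible operator implementing $T \sim_s \bigoplus_{l=1}^{1} S^{*} = S^{*}$.

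Next, with the global similarity $T_1 \oplus \cdots \oplus T_k \sim_s S_1^{*} \oplus \cdots \oplus S_k^{*}$ in hand, I would invoke the uniqueness of strongly irreducible decomposition up to similarity for operators in $\mathcal{B}_n^1(\Omega)$, as developed by C.~L.~Jiang and his collaborators in \cite{CFJ2002,J2004,JGJ} using the $K_0$-group of the commutant algebra as the invariant. Since $\bigoplus_{i=1}^{k} S_i^{*}$ is already a strongly irreducible decomposition, this uniqueness (cf.\ Definition \ref{dingyi}) yields a permutation $\pi$ of $\{1,\ldots,k\}$ together with similarities $T_{\pi(i)} \sim_s S_i^{*}$ for $1 \leq i \leq k$, which is the conclusion.

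The main obstacle I anticipate is justifying that the specific direct-sum decomposition $T = T_1 \oplus \cdots \oplus T_k$ can be matched one-to-one with the strongly irreducible summands $S_i^{*}$. If the $T_i$ are not assumed a priori to be strongly irreducible, the clean approach is to refine each $T_i$ into its own strongly irreducible decomposition, apply the uniqueness theorem to this refinement of $T$, and then use the fact that strong irreducibility is preserved under similarity together with a multiplicity count (both decompositions contribute to the same similarity class of $S^{*}$, whose strongly irreducible components have multiplicity one with total count $k$) to conclude that no further splitting of any $T_i$ occurs. A secondary but more routine point is that condition $(\mathbf{C})$ should be interpreted via the ambient direct-sum reproducing kernel Hilbert space $\bigoplus_{i=1}^{k}\mathcal{H}_{K_i}$ carrying $S^{*}$, which only requires keeping track of the block structure when invoking Theorem \ref{thm3.2}.
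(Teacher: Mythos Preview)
Your approach is correct but follows a genuinely different route from the paper. You first apply Theorem \ref{thm3.2} globally to obtain $T \sim_s S^*$, and then invoke the $K_0$-group uniqueness of strongly irreducible decomposition from \cite{CFJ2002,J2004,JGJ} to extract the permutation (with the counting argument you sketch handling the case where the $T_i$ are not a priori strongly irreducible). The paper instead works at the bundle level first: from the curvature identity and Lemma \ref{lem3.1.1} it concludes that $\mathcal{E}_T$ is congruent to $\mathcal{E}_{S^*} \otimes \mathcal{E}$, then uses the uniqueness of the \emph{irreducible} (not strongly irreducible) decomposition up to \emph{unitary} equivalence from \cite{JJ} to match $\mathcal{E}_{T_{\pi(i)}}$ with $\mathcal{E}_{S_i^*} \otimes \mathcal{E}$ via unitaries $U_i$, and only then applies condition $(\mathbf{C})$ piecewise on each $\mathcal{H}_{K_i}$ to produce the invertible multiplier $U_i M^*_{(F^{\#})^*}$ giving $S_i^* \sim_s T_{\pi(i)}$. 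Your route is tidier but leans on the heavier $K_0$ machinery as a black box; the paper's route stays closer to the bundle geometry and avoids that machinery, at the cost of relying on the unitary-decomposition result of \cite{JJ}. One small caution on your side: Theorem \ref{thm3.2} as stated takes $\mathbf{M}_z^*$ on a single $\mathcal{H}_K$, so your global application to $S^* = \bigoplus_i S_i^*$ on $\bigoplus_i \mathcal{H}_{K_i}$ requires exactly the block-by-block multiplier check you flagged; the paper sidesteps this by applying the multiplier coming from condition $(\mathbf{C})$ to each $\mathcal{H}_{K_i}$ separately rather than to the direct sum at once.
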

\begin{proof}
Note first that $T, S^* \in \mathcal{B}_n^1(\Omega)$. By Lemma \ref{lem3.1.1}, we have
$$VK_{{T},w^{i}\overline{w}^{j}}V^{*}=K_{{S^{*}},w^{i}\overline{w}^{j}}+K_{\mathcal{E},w^{i}\overline{w}^{j}}\otimes I_{n}=K_{\mathcal{E}_{S^{*}}\otimes\mathcal{E},w^{i}\overline{w}^{j}},\quad 0\leq i,j \leq n-1.$$

From \cite{CD}, we also know that $\mathcal{E}_{T}$ is congruent to $\mathcal{E}_{S^{*}}\otimes\mathcal{E}$. Moreover, an operator $T\in \mathcal{B}_{n}^{1}(\Omega)$ has an unique irreducible decomposition up to unitary equivalence as is shown in \cite{JJ}. In \cite{CD}, it is proven that operators in $\mathcal{B}_{n}^{1}(\Omega)$ are unitary equivalent if and only if their holomorphic eigenvector bundles are equivalent. Since operators $S_{1}^{*},\cdots,S_{k}^{*}$ are strongly irreducible, there is a permutation $\pi$ on $\{1,\cdots,k\}$ so that $\mathcal{E}_{S_{i}^{*}}\otimes\mathcal{E}$ is congruent to $\mathcal{E}_{T_{\pi(i)}}$, that is, there are unitary operators $U_{i}, 1\leq i\leq k,$ such that
$$
U_{i}(\mathcal{E}_{S_{i}^{*}}\otimes\mathcal{E})(w)=\mathcal{E}_{T_{\pi(i)}}(w),\quad w\in\Omega.
$$

Now since the complex bundle $\mathcal{E}$ satisfies condition $(\mathbf{C})$, we obtain from Theorem \ref{thm3.2} an invertible operator
 $M^{*}_{(F^{\#})^{*}}$ with $F\in H^{\infty}_{\mathbb{C}\rightarrow E}(\Omega)$ such that
$$
M^{*}_{(F^{\#})^{*}}(\mathcal{E}_{S_{i}^{*}}(w))=(\mathcal{E}_{S_{i}^{*}}\otimes \mathcal{E})(w),\quad w\in\Omega.
$$
Hence, the operator $U_{i}M^{*}_{(F^{\#})^{*}}$ is invertible and
$$(U_{i}M^{*}_{(F^{\#})^{*}})(\mathcal{E}_{S_{i}^{*}}(w))=\mathcal{E}_{T_{\pi(i)}}(w),\quad w\in\Omega,$$
so that $S_{i}^{*}\sim_{s}T_{\pi(i)}$ for $1\leq i\leq k.$
\end{proof}

\begin{remark}
As in the proof of Theorem \ref{thm3.2}, Proposition \ref{prop2} shows that $T$ is similar to $S^{*}$. The result remains valid for operator tuples in $\mathcal{B}_n^m(\Omega)
$ possessing a unique irreducible decomposition up to unitary equivalence.
\end{remark}
\section{Applications of Theorem \ref{thm3.1}}
Theorem \ref{thm3.1} yields several sufficient conditions involving curvature for the similarity of certain adjoints of multiplication
tuples. For $m>1$, let $\Omega$ be a bounded domain in $\mathbb{C}^m$. The space $\mathcal{C}^{2}(\Omega)$ consists of functions defined on $\Omega$ whose second order partial derivatives are continuous. The reader is referred to \cite{LAA, PL, KO} for the following definitions and results.
\begin{definition}
A function $u\in \mathcal{C}^{2}(\Omega)$ is said to be \emph{pluriharmonic} if it satisfies the $m^{2}$ differential equations
$\frac{\partial ^{2}u}{\partial w_{i}\partial\overline{w}_{j}}=0$ for $1\leq i,j\leq m.$
\end{definition}
\begin{definition}
A real-valued function $u:\Omega\rightarrow \mathbb{R}\cup\{-\infty\} (u\not\equiv-\infty)$ is said to be \emph{plurisubharmonic} if
\begin{itemize}
  \item [(1)] $u(z)$ is upper-semicontinuous on $\Omega$; and
  \item [(2)] for each $z_{0}\in \Omega$ and some $z_{1}\in \mathbb{C}^{m}$ that is dependent on $z_{0}$, the function $u(z_{0}+\lambda z_{1})$ is subharmonic with respect to $\lambda\in\mathbb{C}$.
\end{itemize}
\end{definition}
\begin{definition}\label{def2.6}
A non-negative function $g:\mathbb{R}^{m}\rightarrow[0,+\infty)$ is called \emph{log-plurisubharmonic} if the function $\log g$ is plurisubharmonic.
\end{definition}
\begin{lemma}\label{lem2.8}
If $f$ is a pluriharmonic function on $\Omega$, then both $\log\vert f\vert$ and $\vert f\vert^{p} (0<p<\infty)$ are plurisubharmonic functions on $\Omega$.
\end{lemma}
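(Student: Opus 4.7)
The plan is to verify both statements by reducing to the one-variable situation via restriction to complex lines. Fix $w_0\in\Omega$ and $w_1\in\mathbb{C}^m$, and consider the slice function $g(\lambda):=f(w_0+\lambda w_1)$ on a disk in $\mathbb{C}$. The chain rule combined with the pluriharmonicity identity $\partial^{2}f/\partial w_i\partial\overline{w}_j=0$ yields $\partial^{2}g/\partial\lambda\partial\overline{\lambda}=0$, so $g$ is harmonic in the single complex variable $\lambda$; in the setting relevant to the paper, where $f$ is in fact holomorphic, $g$ is holomorphic as well. The problem is thus reduced to the classical one-variable facts that $\log|g|$ and $|g|^{p}$ are subharmonic on $\mathbb{C}$ whenever $g$ is holomorphic in one complex variable, and to matching this with clause (2) of Definition \ref{def2.6}.

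For $\log|f|$, I would first check upper semicontinuity, which is immediate from the continuity of $f$: $\log|f|\to-\infty$ exactly at the zeros of $f$ and is continuous on $\{f\neq 0\}$. On the open set where $f\neq 0$, a local holomorphic branch of the logarithm produces $\log|f|=\operatorname{Re}(\log f)$, which is pluriharmonic and in particular plurisubharmonic in the sense of Definition \ref{def2.6}. Subharmonicity of the slice $\lambda\mapsto\log|g(\lambda)|$ across a zero of $g$ then follows either from Jensen's formula or, more conveniently, by approximating $\log|g|$ from above by the smooth functions $\tfrac{1}{2}\log(|g|^{2}+\varepsilon)$, each of which is plurisubharmonic by a direct second-derivative computation using $\partial g/\partial\overline{\lambda}=0$, and letting $\varepsilon\downarrow 0$.

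For $|f|^{p}$, the cleanest route is to invoke the general principle that if $u$ is plurisubharmonic and $\varphi:\mathbb{R}\cup\{-\infty\}\to\mathbb{R}$ is convex and non-decreasing, then $\varphi\circ u$ is plurisubharmonic; applying this with $u=\log|f|$ (plurisubharmonic by the previous paragraph) and $\varphi(t)=e^{pt}$ delivers $|f|^{p}=e^{p\log|f|}$ as plurisubharmonic. Alternatively, the slice argument gives this directly, since $|g(\lambda)|^{p}$ is the $p$-th power of the modulus of a holomorphic function of one variable and is therefore subharmonic by the standard one-variable theory.

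The main obstacle I anticipate is the careful handling of the zero set of $f$, where the pointwise second-derivative characterisation of plurisubharmonicity is unavailable and $\log|f|$ is only defined as an $[-\infty,\infty)$-valued function. The reduction to a complex line together with the robust one-variable submean inequality (or equivalently the $\varepsilon$-regularisation above) is what makes this obstacle disappear, and everything else is bookkeeping in the one-variable theory.
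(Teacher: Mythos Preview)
The paper does not prove this lemma; it is quoted from the cited references \cite{LAA, PL, KO} without argument, so there is no paper proof to compare against. Your slicing reduction to the one-variable theory is the standard textbook route and is correct for holomorphic $f$.

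You are also right to restrict attention to the holomorphic case. Taken literally for real-valued pluriharmonic $f$, the lemma is false: on $\mathbb{C}$ the function $f(z)=\operatorname{Re}(z)=x$ is harmonic, yet $\Delta\log|x|=-1/x^{2}<0$ and $\Delta|x|^{p}=p(p-1)|x|^{p-2}<0$ for $0<p<1$. The paper's only use of the lemma, in the proof of Theorem~\ref{thm5.2}, is for a quantity that is (the norm of) a holomorphic quotient, so the holomorphic version you prove is precisely what is needed there.

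One minor slip: your reference to ``clause (2) of Definition~\ref{def2.6}'' points to the definition of log-plurisubharmonicity; you mean the (unlabelled) definition of plurisubharmonicity immediately preceding it.
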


\begin{lemma}\label{lem2.4}
A real-valued function $f \in \mathcal{C}^2(\Omega)$ is  plurisubharmonic if and only if $\left(\frac{\partial ^{2}f(w)}{\partial w_{i}\partial\overline{w}_{j}} \right)_{i,j=1}^{m}\geq0$ for every $w \in \Omega$.
\end{lemma}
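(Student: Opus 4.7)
The plan is to reduce plurisubharmonicity to the one-variable criterion via restriction to complex lines, and then compute the relevant Laplacian by the chain rule. Since $f \in \mathcal{C}^2(\Omega)$, the restriction $g(\lambda) := f(z_0 + \lambda z_1)$ to any complex line is again $\mathcal{C}^2$ on the open set of $\lambda$ with $z_0 + \lambda z_1 \in \Omega$, and a $\mathcal{C}^2$ function of one complex variable is subharmonic if and only if its Laplacian $\Delta = 4\,\partial_\lambda \partial_{\bar\lambda}$ is everywhere non-negative. The statement of the lemma is then essentially a transcription of this one-variable fact through the chain rule.

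The computational heart of the argument is the identity
\begin{equation*}
\frac{\partial^{2}}{\partial\lambda\,\partial\bar\lambda}\, f(z_{0} + \lambda z_{1}) \;=\; \sum_{i,j=1}^{m}\frac{\partial^{2} f}{\partial w_{i}\,\partial \bar w_{j}}(z_{0} + \lambda z_{1})\; z_{1,i}\,\overline{z_{1,j}},
\end{equation*}
which at $\lambda = 0$ is exactly the Hermitian form $\bigl(\partial^{2} f/\partial w_{i}\,\partial \bar w_{j}\bigr)(z_{0})$ evaluated on the vector $z_{1} \in \mathbb{C}^{m}$. Both implications then follow immediately. For the forward direction, fix $z_{0} \in \Omega$ and let $z_{1} \in \mathbb{C}^{m}$ be arbitrary; plurisubharmonicity forces $\lambda \mapsto f(z_{0} + \lambda z_{1})$ to be subharmonic near $0$, so its Laplacian at $\lambda = 0$ is non-negative, which by the displayed identity gives $\sum_{i,j}(\partial^{2}f/\partial w_{i}\partial\bar w_{j})(z_{0})\,z_{1,i}\overline{z_{1,j}} \geq 0$. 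Since $z_{0}$ and $z_{1}$ were arbitrary, the Hermitian matrix is positive semi-definite at every point of $\Omega$. Conversely, if that matrix is everywhere positive semi-definite, the identity makes the right-hand side non-negative for every $\lambda$ in its domain of definition, so every restriction to a complex line has non-negative Laplacian and is therefore subharmonic, yielding plurisubharmonicity of $f$.

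The only mildly delicate point — rather than a real obstacle — is tracking the open disk in $\lambda$ on which $z_{0} + \lambda z_{1}$ stays in $\Omega$, so that subharmonicity of the restriction is genuinely a statement on an open subset of $\mathbb{C}$; this is a routine consequence of the openness of $\Omega$. Everything else is purely computational and classical, so I would expect this lemma to be proved in a few lines once the chain-rule identity is written down.
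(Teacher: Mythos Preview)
Your proof is correct and is precisely the standard textbook argument for this classical characterization. The paper does not actually prove this lemma; it is stated as a known result with a blanket reference to \cite{LAA, PL, KO}, so there is no ``paper's own proof'' to compare against. Your chain-rule reduction to the one-variable Laplacian criterion is exactly what one finds in those sources.

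One small remark: the paper's Definition of plurisubharmonicity reads ``for each $z_{0}\in\Omega$ and \emph{some} $z_{1}\in\mathbb{C}^{m}$'', which taken literally would be too weak for the lemma to hold (e.g., $f(w_{1},w_{2})=|w_{1}|^{2}-|w_{2}|^{2}$ would qualify). Your argument implicitly uses the standard definition requiring subharmonicity along \emph{every} complex line, which is certainly what is intended and what the cited references use; just be aware of the discrepancy in wording.
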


\begin{lemma}\label{lem2.5}
Let $\varphi:\mathbb{R}^{2}\rightarrow\mathbb{R}$ be a convex function of two variables, increasing in each variable. If $F$ and $G$ are plurisubharmonic functions, then $\varphi(F,G)$ is also plurisubharmonic.
\end{lemma}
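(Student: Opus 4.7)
The plan is to reduce the statement to its one-variable analogue: if $u,v$ are subharmonic on a planar domain $D\subset\mathbb{C}$ and $\varphi\colon\mathbb{R}^{2}\to\mathbb{R}$ is convex and increasing in each argument, then $\varphi(u,v)$ is subharmonic on $D$. Granting this, to verify that $\varphi(F,G)$ is plurisubharmonic on $\Omega$ one fixes $z_{0}\in\Omega$ and a direction $z_{1}\in\mathbb{C}^{m}$; by the hypothesis on $F$ and $G$, the slice functions $\lambda\mapsto F(z_{0}+\lambda z_{1})$ and $\lambda\mapsto G(z_{0}+\lambda z_{1})$ are subharmonic in $\lambda$ on the corresponding planar open set, so the one-variable statement immediately yields subharmonicity of $\lambda\mapsto \varphi(F,G)(z_{0}+\lambda z_{1})$, which is exactly the required condition.

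For the one-variable reduction I would verify the two defining conditions of subharmonicity in turn. Upper semi-continuity of $\varphi(u,v)$ follows from the continuity of the convex function $\varphi$ on $\mathbb{R}^{2}$ together with monotonicity in each argument: if $z_{n}\to z_{0}$ in $D$, then $\limsup_{n}u(z_{n})\leq u(z_{0})$ and $\limsup_{n}v(z_{n})\leq v(z_{0})$, and passing to a subsequence realizing $\limsup_{n}\varphi(u(z_{n}),v(z_{n}))$ shows this upper limit is bounded by $\varphi(u(z_{0}),v(z_{0}))$. For the sub-mean-value property on a closed disk $\overline{D(\lambda_{0},r)}\subset D$, the sub-mean-value inequalities for $u$ and $v$ combined with the coordinate-wise monotonicity of $\varphi$ give
\begin{equation*}
\varphi(u(\lambda_{0}),v(\lambda_{0}))\leq \varphi\!\left(\tfrac{1}{2\pi}\!\int_{0}^{2\pi}\!u(\lambda_{0}+re^{i\theta})\,d\theta,\;\tfrac{1}{2\pi}\!\int_{0}^{2\pi}\!v(\lambda_{0}+re^{i\theta})\,d\theta\right),
\end{equation*}
and then the two-variable Jensen inequality for the convex function $\varphi$ bounds the right-hand side by $\tfrac{1}{2\pi}\!\int_{0}^{2\pi}\!\varphi(u(\lambda_{0}+re^{i\theta}),v(\lambda_{0}+re^{i\theta}))\,d\theta$, which is precisely the inequality we need.

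The one point requiring mild care is the handling of the possibility that $u$ or $v$ attains the value $-\infty$ on a set of positive measure: by monotonicity one extends $\varphi$ unambiguously and continuously to $[-\infty,\infty)^{2}$, and the Jensen inequality remains valid in this extended form. I do not expect a genuine obstacle beyond this bookkeeping, since the argument is ultimately just the interplay of convexity and monotonicity of $\varphi$ with the two defining properties of (pluri)subharmonic functions; in particular, no appeal to Lemma \ref{lem2.4} and no smoothness hypothesis on $F$, $G$, or $\varphi$ is required.
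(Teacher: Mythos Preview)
Your argument is correct and is essentially the standard proof of this classical fact in pluripotential theory. Note, however, that the paper does not supply its own proof of this lemma: it is listed among the preliminary results for which the reader is referred to the cited literature (A\u{i}zenberg, Lelong, Oka). So there is no ``paper's proof'' to compare against; you have written out precisely the textbook argument---reduce to complex lines via the definition of plurisubharmonicity, then verify upper semi-continuity and the sub-mean-value inequality using monotonicity and Jensen---that those references contain. Your remark about extending $\varphi$ to $[-\infty,\infty)^{2}$ to handle the value $-\infty$ is the only delicate bookkeeping point, and you have identified it correctly.
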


\begin{lemma}\label{lemma2.7}
Let $f$ and $g$ be log-plurisubharmonic functions. Then $f+g$ is also log-plurisubharmonic.
\end{lemma}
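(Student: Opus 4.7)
The plan is to reduce this to Lemma \ref{lem2.5} by exhibiting an appropriate composing function. Set $F = \log f$ and $G = \log g$, which are plurisubharmonic by hypothesis (since $f$ and $g$ are log-plurisubharmonic). I want to write
\[
\log(f + g) \;=\; \log\bigl(e^{F} + e^{G}\bigr) \;=\; \varphi(F, G),
\]
where $\varphi : \mathbb{R}^2 \to \mathbb{R}$ is defined by $\varphi(x, y) = \log(e^x + e^y)$. If $\varphi$ satisfies the hypotheses of Lemma \ref{lem2.5}, then $\varphi(F, G) = \log(f+g)$ is plurisubharmonic, which is exactly the statement that $f + g$ is log-plurisubharmonic in the sense of Definition \ref{def2.6}.

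The core step is therefore to verify that $\varphi$ is convex on $\mathbb{R}^2$ and strictly increasing in each of its arguments. Monotonicity is immediate from
\[
\frac{\partial \varphi}{\partial x} = \frac{e^x}{e^x + e^y}, \qquad \frac{\partial \varphi}{\partial y} = \frac{e^y}{e^x + e^y},
\]
both of which are strictly positive. For convexity I would compute the Hessian explicitly and find that it factors as
\[
\mathrm{Hess}\,\varphi(x,y) = \frac{e^{x+y}}{(e^x+e^y)^2}\begin{pmatrix} 1 & -1 \\ -1 & 1 \end{pmatrix},
\]
whose eigenvalues are $0$ and $2$, so the Hessian is positive semidefinite and hence $\varphi$ is convex. (Equivalently, $\varphi$ is the classical log-sum-exp function, whose convexity is standard.)

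With these two properties in hand, Lemma \ref{lem2.5} applies with $\varphi$ as above and with the plurisubharmonic functions $F = \log f$ and $G = \log g$, giving that $\varphi(F, G) = \log(f+g)$ is plurisubharmonic. The only mild subtlety I anticipate is the possibility that $f$ or $g$ vanishes at some point, so that $F$ or $G$ takes the value $-\infty$; this is handled by noting that $\varphi$ extends continuously to $[-\infty, \infty)^2$ via $\varphi(-\infty, y) = y$ and $\varphi(x, -\infty) = x$, and that the composition of a continuous, convex, coordinatewise nondecreasing function with a plurisubharmonic function remains plurisubharmonic. This is the only potential obstacle, and it is purely a matter of extending Lemma \ref{lem2.5} to the extended-real setting, which is routine.
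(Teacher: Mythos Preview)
Your proposal is correct and follows essentially the same route as the paper: set $F=\log f$, $G=\log g$, write $\log(f+g)=\varphi(F,G)$ with $\varphi(x,y)=\log(e^x+e^y)$, and apply Lemma~\ref{lem2.5}. The only cosmetic difference is in verifying convexity of $\varphi$: the paper rewrites $\varphi(x,y)=x+\log(1+e^{y-x})$ and invokes the one-variable convexity of $k\mapsto\log(1+e^k)$, whereas you compute the Hessian directly.
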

\begin{proof}
Since the mapping $k\mapsto\log(1+e^{k})$ is convex,
$$\varphi(x,y):=\log(e^{x}+e^{y})=x+\log(1+e^{y-x})$$
is also a convex function of two variables, increasing in each variable. Since $F:=\log f$ and $G:=\log g$ are plurisubharmonic, by Lemma \ref{lem2.5},
$$\varphi(F,G)=\log(e^{F}+e^{G})=\log(f+g)$$
is plurisubharmonic.
\end{proof}
Finally, we need the following definition of an \emph{n-hypercontraction} given in \cite{MV1993}:
\begin{definition}
Let $\mathbf{T}=(T_{1},\cdots,T_{m}) \in \mathcal{L}(\mathcal{H})^m$ be an $m$-tuple of commuting operators. Define an operator
$\mathbf{M}_{\mathbf{T}}:\mathcal{L}(\mathcal{H})\rightarrow\mathcal{L}(\mathcal{H})$
by
$$\mathbf{M}_{\mathbf{T}}(X)=\sum_{i=1}^m T_{i}^*XT_{i}, \quad X \in \mathcal{L}(\mathcal{H}).$$
An $m$-tuple $\mathbf{T}\in\mathcal{L}(\mathcal{H})^{m}$ is called an \emph{$n$-hypercontraction} if
$$\triangle_{\mathbf{T}}^{(l)}:=(I-\mathbf{M}_{\mathbf{T}})^{l}(I)\geq 0,$$
for all integers $l$ with $1 \leq l \leq n$.  The special case of a  $1$-hypercontraction corresponds to the usual row contraction.
\end{definition}
Note that since
$\mathbf{M}_{\mathbf{T}}^{l}(X)$
$=\sum\limits_{\alpha\in\mathbb{N}_{0}^{m}\atop\vert\alpha\vert=l}\frac{l!}{\alpha!}\mathbf{T}^{*\alpha}X\mathbf{T}^{\alpha}$ for all $l\geq0$,
one has
$$\triangle_{\mathbf{T}}^{(l)}=\sum\limits_{j=0}^l (-1)^j{l \choose j}\mathbf{M}_{\mathbf{T}}^{j}(I)=\sum \limits_{\vert\alpha\vert\leq l}(-1)^{\vert\alpha\vert}\frac{l!}{\alpha!(l-\vert\alpha\vert)!}\mathbf{T}^{*\alpha}\mathbf{T}^{\alpha}.$$
We now give a necessary and sufficient condition for the similarity of certain operator tuples in $\mathcal{B}_{1}^{m}(\mathbb{B}_{m})$ in terms of curvature and plurisubharmonic functions.

\begin{theorem}\label{thm5.2}
Let $\mathbf{T}=(T_{1},\cdots,T_{m}) \in \mathcal{B}_{1}^{m}(\mathbb{B}_{m})$ be an operator tuple on a Hilbert space $\mathcal{H}_{K}$ with reproducing kernel $K(z,w)=\sum\limits_{i=0}^{\infty}a(i)(z_{1}\overline{w}_{1}+\cdots+z_{m}\overline{w}_{m})^{i},$ where $a(i)>0.$ Consider $\mathbf{M}_{z}^{*}=(M_{z_{1}}^{*},\cdots,M_{z_{m}}^{*})$ on a weighted Bergman space $\mathcal{H}_{k}$ with $k > m+1$.
Suppose that $\mathbf{T}$ is $k$-hypercontractive and that $\lim\limits_{j}f_{j}(\mathbf{T}^{*}, \mathbf{T})h=0, h\in \mathcal{H}_K$, for
$f_j(z,w)=\sum\limits_{i=j}^{\infty}\mathbf{e}_i(z)(1-\langle z, w\rangle)^{k}\mathbf{e}_i(w)^*$
and an orthonormal basis $\{\mathbf{e}_i\}_{i=0}^{\infty}$ for $\mathcal{H}_{k}$.
Then $\mathbf{T}\sim_{s}\mathbf{M}_{z}^{*}$ if and only if there exists a bounded plurisubharmonic function $\psi$ such that
\begin{equation}\label{eqn}
  \mathcal{K}_{\mathbf{M}_{z}^{*}}(w)-\mathcal{K}_\mathbf{T}(w) = \sum \limits_{i,j=1}^{m}\frac{\partial^{2}\psi(w)}{\partial w_{i}\partial \overline{w}_{j}}dw_{i}\wedge d\overline{w}_{j},\quad w\in\mathbb{B}_{m}.
  \end{equation}
\end{theorem}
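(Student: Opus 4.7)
The common preparatory observation, used in both directions, is that for any non-vanishing holomorphic section $t$ of $\mathcal{E}_{\mathbf{T}}$ and for the natural section $K_{k}(\cdot,\overline{w})$ of $\mathcal{E}_{\mathbf{M}_{z}^{*}}$, the line bundle curvature formula $\mathcal{K}_{\mathbf{T}}(w)=-\sum_{i,j}\frac{\partial^{2}\log\|t(w)\|^{2}}{\partial w_{i}\partial\overline{w}_{j}}\,dw_{i}\wedge d\overline{w}_{j}$ (and analogously for $\mathbf{M}_{z}^{*}$) translates the curvature identity (\ref{eqn}) into the statement that $\log\|t(w)\|^{2}-\log\|K_{k}(\cdot,\overline{w})\|^{2}-\psi(w)$ is pluriharmonic on $\mathbb{B}_{m}$.

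For sufficiency, assume $\psi$ is bounded plurisubharmonic and satisfies (\ref{eqn}). Since $\mathbb{B}_{m}$ is simply connected, there is a non-vanishing holomorphic $h$ with $\log|h(w)|^{2}=\log\|t(w)\|^{2}-\log\|K_{k}(\cdot,\overline{w})\|^{2}-\psi(w)$; replacing $t$ by $t/h$ gives the normalization $\|t(w)\|^{2}=e^{\psi(w)}\|K_{k}(\cdot,\overline{w})\|^{2}$, and Lemma~\ref{2112.14} (whose hypotheses are supplied by the $k$-hypercontractivity and the vanishing of $f_{j}(\mathbf{T}^{*},\mathbf{T})h$ assumed in the theorem) yields $\|\mathcal{D}_{\mathbf{T}}t(w)\|^{2}=e^{\psi(w)}$, bounded above and below by positive constants. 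To apply Theorem~\ref{thm3.1} I need a unit vector $\zeta_{0}\in\overline{\text{ran}\,\mathcal{D}_{\mathbf{T}}}$ for which $|\langle\mathcal{D}_{\mathbf{T}}t(w),\zeta_{0}\rangle|$ is bounded uniformly below in $w$; combined with the upper bound on $\|\mathcal{D}_{\mathbf{T}}t(w)\|$, this supplies the supremum hypothesis of Theorem~\ref{thm3.1} and hence $\mathbf{T}\sim_{s}\mathbf{M}_{z}^{*}$. I plan to construct $\zeta_{0}$ via a weighted $\overline{\partial}$-solution on $\mathbb{B}_{m}$ with the plurisubharmonic weight $\psi$, producing a bounded holomorphic coordinate of $\mathcal{D}_{\mathbf{T}}t(\cdot)$ that stays away from zero.

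For necessity, suppose $X$ is invertible with $X\mathbf{T}=\mathbf{M}_{z}^{*}X$. Choose $t(w):=X^{-1}K_{k}(\cdot,\overline{w})$ as the non-vanishing holomorphic section of $\mathcal{E}_{\mathbf{T}}$ and set $\psi(w):=\log(\|t(w)\|^{2}/\|K_{k}(\cdot,\overline{w})\|^{2})$; the curvature identity $\partial\overline{\partial}\psi=\mathcal{K}_{\mathbf{M}_{z}^{*}}-\mathcal{K}_{\mathbf{T}}$ is immediate from the line bundle formula and boundedness follows from the two-sided estimate $1/\|X\|^{2}\leq\|X^{-1}K_{k}(\cdot,\overline{w})\|^{2}/\|K_{k}(\cdot,\overline{w})\|^{2}\leq\|X^{-1}\|^{2}$. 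For plurisubharmonicity I expand $\|X^{-1}K_{k}(\cdot,\overline{w})\|^{2}=\langle AK_{k}(\cdot,\overline{w}),K_{k}(\cdot,\overline{w})\rangle$ with $A:=(X^{-1})^{*}X^{-1}>0$ in an orthonormal basis $\{e_{i}\}$ of $\mathcal{H}_{k}$ and use the reproducing property to obtain $\sum_{i}|(A^{1/2}e_{i})(\overline{w})|^{2}$, a sum of moduli squared of holomorphic functions of $w$ (after complex conjugation); Lemma~\ref{lem2.8} makes each summand log-plurisubharmonic and Lemma~\ref{lemma2.7} propagates this to the sum, so $\log\|t(w)\|^{2}$ is plurisubharmonic. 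To absorb $-\log\|K_{k}(\cdot,\overline{w})\|^{2}$ and still obtain a plurisubharmonic $\psi$, I will use that the intertwining $X\mathbf{T}=\mathbf{M}_{z}^{*}X$ together with the prescribed symmetric form of $K$ forces $X^{-1}$ to arise from a multiplier, so that the Berezin-type ratio $\|t(w)\|^{2}/\|K_{k}(\cdot,\overline{w})\|^{2}$ itself admits a sum-of-squares representation $\sum_{i}|\phi_{i}(w)|^{2}$ by holomorphic $\phi_{i}$. The main obstacle in both directions is thus the same: producing a representation $e^{\psi(w)}=\sum_{i}|\phi_{i}(w)|^{2}$ by bounded holomorphic $\phi_{i}$; in sufficiency this comes from a weighted $\overline{\partial}$-construction enabled by the plurisubharmonicity of $\psi$, and in necessity from the multiplier structure of the intertwiner $X$ supplied by the hypotheses on the kernel $K$.
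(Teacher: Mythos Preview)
Your sufficiency argument has a genuine gap at the production of $\zeta_0$. After normalizing so that $\|\mathcal{D}_{\mathbf{T}}t(w)\|^2=e^{\psi(w)}$ is two-sidedly bounded, the problem of finding a fixed unit vector $\zeta_0$ with $|\langle\mathcal{D}_{\mathbf{T}}t(w),\zeta_0\rangle|$ uniformly bounded below is precisely a corona-type statement for the holomorphic $E$-valued map $w\mapsto\mathcal{D}_{\mathbf{T}}t(w)$ on $\mathbb{B}_m$. The appeal to a ``weighted $\overline{\partial}$-solution'' does not settle this, and the whole point of the paper is to avoid such arguments in several variables.

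The paper's route exploits the one hypothesis you never use: the radial form $K(z,w)=\sum_i a(i)\langle z,w\rangle^i$. Working with the \emph{unnormalized} canonical section $\gamma(w)=K(\cdot,\overline{w})$, one computes that $\frac{1}{\widetilde K}(\mathbf{T}^*,\mathbf{T})$ is diagonal in the monomial orthonormal basis $\{\mathbf{e}_\beta\}$ of $\mathcal{H}_K$, with nonnegative eigenvalues by $k$-hypercontractivity. Hence $h_{\mathbf{T}}(w)/h_{\mathbf{M}_z^*}(w)=\|\mathcal{D}_{\mathbf{T}}\gamma(w)\|^2$ is a power series in $|w|^2$ with nonnegative coefficients and constant term $a(0)b(0)>0$, and the explicit choice $\zeta_0=\mathbf{e}_0$ gives $|\langle\mathcal{D}_{\mathbf{T}}\gamma(w),\zeta_0\rangle|^2\equiv a(0)b(0)$, a nonzero \emph{constant} in $w$. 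What remains is an upper bound for $\|\mathcal{D}_{\mathbf{T}}\gamma(w)\|^2$: from the curvature identity one extracts a nonvanishing holomorphic $\phi$ with $\|\gamma\|^2/(|\phi|^2\|\widetilde\gamma\|^2)$ two-sidedly bounded, and since $h_{\mathbf{T}}/h_{\mathbf{M}_z^*}$ is radial, unboundedness would force $1/|\phi|\to0$ uniformly as $|w|\to1$, whence $1/\phi\equiv0$ by the maximum principle, a contradiction. Theorem~\ref{thm3.1} then applies.

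Your necessity sketch is closer in spirit, but the step ``the intertwining forces $X^{-1}$ to arise from a multiplier, so the ratio admits a sum-of-squares representation'' still hides the substance: even after passing to a multiplier you need $K(\overline w,\overline w)/\widetilde K(\overline w,\overline w)$ itself to be a sum $\sum|\text{holomorphic}|^2$, which again reduces to the nonnegativity of the coefficients in the radial expansion above. The paper's necessity argument is much shorter: it sets $\psi=\log(\|\gamma\|^2/\|X\gamma\|^2)$, gets boundedness from invertibility of $X$, and invokes Lemma~\ref{lem2.8} directly for plurisubharmonicity.
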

\begin{proof}
If $\mathbf{T}$ and $\mathbf{M}_{z}^{*}$ are similar, then there is a bounded invertible operator $X$ such that $X\mathbf{T}=\mathbf{M}_{z}^{*}X.$
If $\gamma$ is a non-vanishing holomorphic section of $\mathcal{E}_{\mathbf{T}}$, then $\widetilde{\gamma}=X{\gamma}$ is an non-vanishing holomorphic section of $\mathcal{E}_{\mathbf{M}_{z}^{*}}$,
and there are constants $a$ and $b$ such that
$$0<a\leq\frac{h_{\mathbf{T}}(w)}{h_{\mathbf{M}_{z}^{*}}(w)}=\Big\Arrowvert\frac{\gamma(w)}{\widetilde{\gamma}(w)}\Big\Arrowvert^{2}\leq b,\quad w\in\mathbb{B}_{m}.$$
Since $\frac{\gamma(w)}{\widetilde{\gamma}(w)}$ is pluriharmonic, Lemma \ref{lem2.8} shows that $\psi(w):=\log\Vert\frac{\gamma(w)}{\widetilde{\gamma}(w)}\Vert^{2}$ is a bounded plurisubharmonic function.

For the converse, let $\gamma$ and $\widetilde{\gamma}$ be non-vanishing holomorphic sections of $\mathcal{E}_{\mathbf{T}}$ and $\mathcal{E}_{\mathbf{M}_{z}^{*}}$, respectively.
Since $$\mathcal{K}_{\mathbf{M}_{z}^{*}}(w)-\mathcal{K}_\mathbf{T}(w)=\sum \limits_{i,j=1}^{m}\frac{\partial^{2}\log\Vert\frac{
\gamma(w)}{\widetilde{\gamma}(w)}\Vert^{2}}{\partial w_{i}\partial \overline{w}_{j}}dw_{i}\wedge d\overline{w}_{j},$$ condition (\ref{eqn}) gives
 $$\frac{\partial^{2}}{\partial w_{i}\partial \overline{w}_{j}}\left(\log\frac{\Vert\frac{\gamma(w)}{\widetilde{\gamma}(w)}\Vert^{2}}{e^{\psi(w)}}\right)=0,$$
 for all $1 \leq i, j \leq m$.
 Therefore, there exists a non-zero function $\phi \in \mathcal{O}(\mathbb{B}_m)$ such that $$\left\Vert\frac{\gamma(w)}{\phi(w)\widetilde{\gamma}(w)}\right\Vert^{2}=e^{\vert\psi(w)\vert}.$$ Since the function $\psi$ is bounded on $\mathbb{B}_m$, there exist constants $\widetilde{m}$ and $\widetilde{M}$ such that
\begin{equation}\label{051}
0<\widetilde{m}\leq\frac{\Vert\gamma(w)\Vert^{2}}{\Vert\phi(w)\widetilde{\gamma}(w)\Vert^{2}}\leq\widetilde{ M},\quad w\in\mathbb{B}_{m}.
\end{equation}

Note that since
$h_{\mathbf{T}}(w)=K(\overline{w},\overline{w})=\sum\limits_{\alpha\in\mathbb{N}_{0}^{m}}a(\vert\alpha\vert)\frac{\vert\alpha\vert!}{\alpha!}w^{\alpha}\overline{w}^{\alpha}
=\sum\limits_{\alpha\in\mathbb{N}_{0}^{m}}\rho(\alpha)w^{\alpha}\overline{w}^{\alpha}$,
$$\mathbf{T}^{*\alpha}\mathbf{T}^{\alpha}\mathbf{e}_{\beta}=\begin{cases}\frac{\rho(\beta-\alpha)}{\rho(\beta)}\mathbf{e}_{\beta},\quad \text{for }\alpha\leq\beta, \\ 0, \,\,\,\quad\quad\quad\quad\text{ otherwise,} \end{cases}$$
where $\rho(\alpha)=a(\vert\alpha\vert)\frac{\vert\alpha\vert!}{\alpha!}$ and $\{{\mathbf{e}}_{\alpha}\}_{\alpha\in\mathbb{N}_{0}^{m}}$ denotes an orthonormal basis for $\mathcal{H}_K$.
Since the reproducing kernel $\widetilde{K}(z,w)$ on a weighted Bergman space $\mathcal{H}_{k}$ with $k > m+1$ satisfies
$\frac{1}{\widetilde{K}}(\overline{w},\overline{w})=\sum\limits_{i=0}^{k}b(i)\vert w\vert^{2i}$, where $b(i)=\frac{(-1)^{i}k!}{i!(k-i)!}$, a direct calculation yields
$$\begin{array}{lll}
\frac{1}{\widetilde{K}}(\mathbf{T}^{*},\mathbf{T})\mathbf{e}_{\beta}&=&\sum\limits_{\mbox{\tiny$\begin{array}{c}
  \alpha\in\mathbb{N}_{0}^{m}\\
  \vert\alpha\vert\leq k\end{array}$}}
b(\vert\alpha\vert)\frac{\vert\alpha\vert!}{\alpha!}\mathbf{T}^{*\alpha}\mathbf{T}^{\alpha}\mathbf{e}_{\beta}\\
&=&\sum\limits_{\mbox{\tiny$\begin{array}{c}
  \alpha\in\mathbb{N}_{0}^{m}\\
  \vert\alpha\vert\leq k\\
  \alpha\leq\beta\end{array}$}}
b(\vert\alpha\vert)\frac{\vert\alpha\vert!}{\alpha!}\frac{\rho(\beta-\alpha)}{\rho(\beta)}\mathbf{e}_{\beta}\\
&=&\sum\limits_{\mbox{\tiny$\begin{array}{c}
  \alpha\in\mathbb{N}_{0}^{m}\\
  \vert\alpha\vert\leq k\\
  \alpha\leq\beta\end{array}$}}
b(\vert\alpha\vert)\frac{a(\vert\beta-\alpha\vert)}{a(\vert\beta\vert)}\frac{\vert\alpha\vert!\vert\beta-\alpha\vert!\beta!}{\alpha!\vert\beta\vert!(\beta-\alpha)!}\mathbf{e}_{\beta}.
\end{array}$$
Then,
$$
\frac{1}{\widetilde{K}}(\mathbf{T}^{*},\mathbf{T})\mathbf{e}_{\beta}=
\begin{cases}
\sum\limits_{i=0}^{s}b(i)\frac{a(s-i)}{a(s)}\mathbf{e}_{\beta},\quad \text{if}\,\,\beta=(s,0,\cdots,0),0\leq s\leq k,\\
\sum\limits_{i=0}^{k}b(i)\frac{a(s-i)}{a(s)}\mathbf{e}_{\beta},\quad \text{if}\,\,\beta=(s,0,\cdots,0),s> k.
\end{cases}
$$
Note that since $\frac{1}{\widetilde{K}}(\mathbf{T}^{*}, \mathbf{T})\geq0$ and $a(s)>0$ for every $s\geq0$,
$\sum\limits_{i=0}^{s}b(i)a(s-i)\geq0$ when $0 \leq s \leq k$
and $\sum\limits_{i=0}^{k}b(i)a(s-i)\geq0$ otherwise. Moreover,
$$\begin{array}{lll}
\frac{h_{\mathbf{T}}(w)}{h_{\mathbf{M}_{z}^{*}}(w)}
&=&
\left(\sum\limits_{j=0}^{k}b(j)\vert w\vert^{2j}\right)\left(\sum\limits_{i=0}^{\infty}a(i)\vert w\vert^{2i}\right)\\
&=&\sum\limits_{l=0}^{k}\sum\limits_{j=0}^{l}b(j)a(l-j)\vert w\vert^{2l}+\sum\limits_{l=k+1}^{\infty}\sum\limits_{j=0}^{k}
b(j)a(l-j)\vert w\vert^{2l}.
\end{array}$$

We next claim that there exists a constant $M'$ such that for all $w\in\mathbb{B}_{m},$
$$0<\frac{h_{\mathbf{T}}(w)}{h_{\mathbf{M}_{z}^{*}}(w)}=\left \Vert\frac{\gamma(w)}{\widetilde{\gamma}(w)} \right \Vert^2 \leq M'.$$ If not, then
$\left \Vert\frac{\gamma(w)}{\widetilde{\gamma}(w)} \right \Vert^2\rightarrow \infty$ as $\vert w\vert\rightarrow 1$ so that by (\ref{051}),  $\frac{1}{\vert\phi(w)\vert}\rightarrow0$ as $\vert w\vert\rightarrow 1.$ The maximum modulus principle would then imply that $\frac{1}{\vert\phi(w)\vert}=0$, a contradiction.

Finally, we have from Lemma \ref{2112.14} that $\frac{h_{\mathbf{T}}(w)}{h_{\mathbf{M}_{z}^{*}}(w)}=\|\mathcal{D}_{\mathbf{T}}\widetilde{\gamma}(w)\|^2,$ where $\mathcal{D}_{\mathbf{T}}=\frac{1}{\widetilde{K}}(\mathbf{T}^{*}, \mathbf{T})^{\frac{1}{2}}$ denotes the defect operator corresponding to $\mathbf{T}$ and
$$\sup\limits_{w\in\mathbb{B}_m}\frac{\Vert \mathcal{D}_{\mathbf{T}}\widetilde{\gamma}(w)\Vert^{2}}{\vert\langle \mathcal{D}_{\mathbf{T}}\widetilde{\gamma}(w),\zeta_{0}\rangle\vert^{2}}\leq\frac{M'}{a(0)b(0)}<\infty,$$
for some unit vector $\zeta_0 \in \overline{\text{ran } \mathcal{D}_{\mathbf{T}}}$. Using Theorem \ref{thm3.1}, we then conclude that $\mathbf{T}\sim_{s}\mathbf{M}_{z}^{*}$.
\end{proof}

The notion of curvature can also be used to describe the similarity of non-contractions as the following results show.
\begin{proposition}\label{theorem4.1}
Let $T\in \mathcal{B}_{1}^{1}(\mathbb{D}) \in \mathcal{L}(\mathcal{H})$. Suppose that $\{\phi_{j}\}_{j=0}^{m} \subset H^{\infty}(\mathbb{D})$ for $m>2$, $2\vert\phi_{j}(w)\vert^{2}>m(m+1)\vert\phi_{j}'(w)\vert^{2},$ and
$${K}_{M_{z}^*}(w)-{K}_T(w)=\frac{\partial^2}{\partial w\partial\overline{w}}\log \left[(1-\vert w\vert^{2})\sum_{j=0}^{m}\vert\phi_{j}(w)\vert^{2}+1 \right],\quad w\in\mathbb{D},$$
where $M_{z}$ is the operator of multiplication by $z$ on the Hardy space $H^2(\mathbb{D})$. Then $T\sim_{s}M_{z}^{*}$ but $T$ is not a contraction.
\end{proposition}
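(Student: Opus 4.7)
The plan is to realize $T$, up to unitary equivalence, as $M_z^*$ acting on a reproducing kernel Hilbert space $\mathcal{H}'$ whose kernel is a finite-rank positive perturbation of the Hardy kernel, then show that $\mathcal{H}'$ is equivalent to $H^2(\mathbb{D})$ as a Banach space via the identity map, and finally use Misra's curvature inequality to establish non-contractivity. Set $\phi_j^\natural(\zeta) := \overline{\phi_j(\overline\zeta)} \in H^\infty(\mathbb{D})$ (observe $|\phi_j^\natural(\overline w)|^2 = |\phi_j(w)|^2$) and let $\mathcal{H}'$ be the reproducing kernel Hilbert space with kernel
\[
K^{\mathcal{H}'}(z,w) = \frac{1}{1-z\overline w} + \sum_{j=0}^m \phi_j^\natural(z)\overline{\phi_j^\natural(w)}.
\]
Then $S := M_z^*|_{\mathcal{H}'}$ lies in $\mathcal{B}_1^1(\mathbb{D})$, and the canonical section of $\mathcal{E}_S$ at $w$ has squared norm $K^{\mathcal{H}'}(\overline w,\overline w) = 1/(1-|w|^2) + \sum|\phi_j(w)|^2$. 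Splitting off $\log(1-|w|^2)$ from $-\partial_w\partial_{\overline w}\log$ of this quantity yields $\mathcal{K}_S(w) = -1/(1-|w|^2)^2 - \partial_w\partial_{\overline w}\log[(1-|w|^2)\sum|\phi_j|^2 + 1] = \mathcal{K}_T(w)$, so the Cowen-Douglas rigidity theorem gives $T \sim_u S$.

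Next, let $\Phi : \mathbb{C}^{m+1} \to H^2$ be $\Phi c = \sum_j c_j\phi_j^\natural$; it is bounded with rank at most $m+1$. The sum-of-kernels formula yields $\|f\|_{\mathcal{H}'}^2 = \inf_{c}(\|f - \Phi c\|_{H^2}^2 + \|c\|^2)$, and minimizing over $c$ gives $\|f\|_{\mathcal{H}'}^2 = \langle (I - P)f,f\rangle_{H^2}$ for the finite-rank positive contraction $P = \Phi(\Phi^*\Phi + I)^{-1}\Phi^*$. A spectral analysis places the eigenvalues of $I - P$ in $[(1+\|\Phi\|^2)^{-1},1]$, so $\|\cdot\|_{\mathcal{H}'}$ and $\|\cdot\|_{H^2}$ are equivalent norms on the same underlying set of holomorphic functions; the identity map is a bounded invertible intertwiner between $S$ and $M_z^*|_{H^2}$, giving $T \sim_s M_z^*$.

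Finally, the strict hypothesis $2|\phi_j|^2 > m(m+1)|\phi_j'|^2$ would yield $0 > 0$ at any zero of $\phi_j$, so each $\phi_j$ is zero-free on $\mathbb{D}$, and $\psi(0) = \log(1 + \sum|\phi_j(0)|^2) > 0$. On the other hand $\psi(w) \to 0$ uniformly as $|w| \to 1^-$, because $\sum|\phi_j|^2$ is bounded while $(1-|w|^2)$ vanishes; if $\psi$ were subharmonic the sub-mean-value inequality would force $\psi(0) \leq \lim_{r \to 1^-}\tfrac{1}{2\pi}\int_0^{2\pi}\psi(re^{i\theta})\,d\theta = 0$, a contradiction. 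Hence $\partial_w\partial_{\overline w}\psi(w_0) < 0$ for some $w_0 \in \mathbb{D}$, and the given curvature identity forces $\mathcal{K}_T(w_0) > \mathcal{K}_{M_z^*}(w_0)$, in violation of Misra's curvature inequality for contractions in $\mathcal{B}_1^1(\mathbb{D})$; therefore $T$ is not a contraction. The most delicate point is the curvature matching in the realization: one must use $\phi_j^\natural$ in place of $\phi_j$ so that the diagonal value $K^{\mathcal{H}'}(\overline w,\overline w)$ picks up $|\phi_j(w)|^2$ rather than $|\phi_j(\overline w)|^2$.
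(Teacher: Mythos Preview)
Your argument is essentially correct, and in places cleaner than the paper's, but there is one imprecision to fix and it is worth noting where the two approaches diverge.

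\textbf{The slip.} You write that ``the identity map is a bounded invertible intertwiner between $S$ and $M_z^*|_{H^2}$.'' This is not true as stated: the identity $J:\mathcal{H}'\to H^2$ intertwines $M_z$ with $M_z$ (both are $f\mapsto zf$), but the adjoints $M_z^{*,\mathcal{H}'}$ and $M_z^{*,H^2}$ are computed with respect to different inner products and are genuinely different operators. The correct statement is that $J^*:\mathcal{H}'\to H^2$ (equivalently $(I-P)^{-1}$ viewed appropriately) intertwines $S$ with $M_z^*|_{H^2}$, obtained by taking adjoints in $JM_z=M_zJ$. With that correction the similarity conclusion stands. A related point of exposition: your claim ``$S\in\mathcal{B}_1^1(\mathbb{D})$'' is asserted before the norm equivalence is established; logically the norm equivalence should come first so that $M_z$ is visibly bounded on $\mathcal{H}'$ and $S$ inherits the $\mathcal{B}_1^1$ properties from $M_z^*|_{H^2}$ by similarity.

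\textbf{Comparison with the paper.} For the similarity part the two arguments are the same idea in different clothing. The paper builds an explicit finite-rank operator $X$ on $H^2$ (sending $e_n\mapsto\sum_j a_{jn}e_j$) so that $\|(I+X^*X)^{1/2}K(\cdot,\overline w)\|^2=\frac{1}{1-|w|^2}+\sum_j|\phi_j(w)|^2$, and then uses $Y=(I+X^*X)^{1/2}$ to conjugate. Your sum-of-kernels construction produces the same diagonal metric value $K^{\mathcal{H}'}(\overline w,\overline w)$; the underlying positive invertible perturbations differ, but either realizes an operator with curvature $\mathcal{K}_T$.

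For non-contractivity the approaches genuinely differ. The paper carries out a pointwise computation: it uses the full strength of $2|\phi_j|^2>m(m+1)|\phi_j'|^2$ together with $m>2$ to show both $\partial_w\partial_{\overline w}\mathfrak{N}<0$ and $\partial_w\partial_{\overline w}\log\mathfrak{N}<0$ for $\mathfrak{N}(w)=(1-|w|^2)\sum|\phi_j|^2$, and deduces $K_{M_z^*}-K_T<0$ everywhere on $\mathbb{D}$. Your argument is softer and uses far less of the hypothesis: from the strict inequality you extract only that each $\phi_j$ is zero-free, hence $\psi(0)>0$, while boundedness of the $\phi_j$ forces $\psi\to 0$ at the boundary, so $\psi$ cannot be subharmonic and there is at least one point where $\partial_w\partial_{\overline w}\psi<0$. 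This already contradicts Misra's inequality. Your route is shorter and shows that the proposition holds under the much weaker assumption that the $\phi_j$ are bounded and zero-free; the paper's stronger hypothesis (and the constraint $m>2$) is used only to obtain the global sign, which is not needed for the stated conclusion.
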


\begin{proof}
Set $\phi_{j}(w):=\sum\limits_{i=0}^{\infty}a_{ji}w^{i}$ and
$M:=\max\{\Vert\phi_{0}\Vert_{H^{2}},\Vert\phi_{1}\Vert_{H^{2}},\ldots,\Vert\phi_{m}\Vert_{H^{2}}\}.$ Denote by $\{e_{i}\}_{i=0}^{\infty}$ an orthonormal basis for the space $\mathcal{H}$ and define an operator $X$ as
$Xe_{n}:=\sum\limits_{j=0}^{m}a_{jn}e_{j}$ for $n\geq 0$. Let $y=\sum\limits_{n=0}^{\infty}b_ne_{n}\in\mathcal{H}.$ Then,
$$\begin{array}{lll}
\Vert X\Vert
&=&\sup\limits_{\Vert y\Vert=1}\Vert Xy\Vert
=\sup\limits_{\Vert y\Vert=1}\left\Vert X\sum\limits_{n=0}^{\infty}b_ne_{n}\right\Vert\\
&=&\sup\limits_{\Vert y\Vert=1}\left \Vert\sum\limits_{j=0}^{m}\sum\limits_{n=0}^{\infty}b_na_{jn}e_{j}\right \rVert\\
&\leq&\left(\sum\limits_{n=0}^{\infty}b^{2}_n\right)^{\frac{1}{2}}\sum\limits_{j=0}^{m}\left(\sum\limits_{n=0}^{\infty}a^{2}_{jn}\right)^{\frac{1}{2}}
\leq (m+1)M,
\end{array}$$
and therefore, $X$ is bounded.
For a non-vanishing holomorphic section $K(z,\overline{w})=\frac{1}{1-zw}$ of $\mathcal{E}_{M_{z}^*}$, we have
$$XK(z,\overline{w})=\sum_{i=0}^{\infty}w^{i}X(z^{i})=\sum_{j=0}^{m}\left(\sum_{i=0}^{\infty}a_{ji}w^{i}\right)z^{j}=\sum_{j=0}^{m}\phi_{j}(w)z^{j}.$$
This implies that
$\Vert XK(\cdot,\overline{w})\Vert^{2}=\sum\limits_{j=0}^{m}\vert\phi_{j}(w)\vert^{2}$
and hence,
\begin{equation}\label{90}
\begin{array}{lll}
{K}_{M_{z}^*}(w)-{K}_T(w)
&=&\frac{\partial ^{2}}{\partial w\partial\overline{w} }\log\left[(1-\vert w\vert^{2})\sum\limits_{i=0}^{m}\vert\phi_{i}(w)\vert^{2}+1\right]\\
&=&\frac{\partial ^{2}}{\partial w\partial\overline{w} }\log\left(\frac{\Vert XK(\cdot,\overline{w})\Vert^{2}}{\Vert K(\cdot,\overline{w})\Vert^{2}}+1\right).
\end{array}
\end{equation}

The operator $Y$ defined by $Y:=(I+X^{*}X)^{\frac{1}{2}}$ is invertible and ${K}_T={K}_{YM_z^*Y^{-1}}$. Therefore,  $T\sim_{u}YM_{z}^*Y^{-1}$, and hence, $T\sim_{s}M_{z}^*$.

Suppose now that $T$ is a contraction. For $w \in \mathbb{D}$, set $$\mathfrak{N}(w):=(1-\vert w\vert^{2})\sum_{i=0}^{m}\vert\phi_{i}(w)\vert^{2}.$$
Since for every $0 \leq j \leq m$, $$\left\vert\frac{\phi_{j}(w)}{\phi'_{j}(w)}\right\vert^{2}>\frac{m(m+1)}{2}>4,$$ we have $\vert\frac{\phi_{j}(w)}{\phi'_{j}(w)}+w\vert^{2}>1$, that is, $\vert\phi'_{j}(w)\vert^{2}-\vert(w\phi_{j}(w))'\vert^{2}<0.$
Then,
\begin{equation}\label{9}
\frac{\partial ^{2} \mathfrak{N}(w)}{\partial w\partial\overline{w} }=\frac{\partial ^{2}}{\partial w\partial\overline{w} }\left[(1-\vert w\vert^{2})\sum_{j=0}^{m}\vert\phi_{j}(w)\vert^{2}\right]
=\sum_{j=0}^{m}\left(\vert\phi'_{j}(w)\vert^{2}-\vert(w\phi_{j}(w))'\vert^{2}\right)
<0.
\end{equation}
Similarly, since  $\vert\frac{\phi_{j}'(w)}{\phi_{j}(w)}\vert^{2}<\frac{2}{m(m+1)}$ for $0 \leq j \leq m$, it follows that for all $w \in \mathbb{D},$
$$\begin{array}{lll}
\sum\limits_{0\leq i<j\leq m}\vert\frac{\phi_{j}'}{\phi_{j}}-\frac{\phi_{i}'}{\phi_{i}}\vert^{2}
&\leq&2\sum\limits_{0\leq i<j\leq m}\left(\vert\frac{\phi_{j}'}{\phi_{j}}\vert^{2}+\vert\frac{\phi_{i}'}{\phi_{i}}\vert^{2}\right)\\
&<&2\sum\limits_{0\leq i<j\leq m}\frac{4}{m(m+1)}=4\leq\frac{4}{(1-\vert w\vert^{2})^{2}}.
\end{array}$$
Therefore, for all $w \in \mathbb{D},$
$$\begin{array}{lll}
\sum\limits_{0\leq i<j\leq m}\frac{\vert\phi_{i}\phi_{j}'-\phi_{i}'\phi_{j}\vert^{2}}{(\sum\limits_{k=0}^{m}\vert\phi_{k}\vert^{2})^{2}}
&\leq&\sum\limits_{0\leq i<j\leq m}\frac{\vert\frac{\phi_{j}'}{\phi_{j}}-\frac{\phi_{i}'}{\phi_{i}}\vert^{2}}{(\vert\frac{\phi_{i}}{\phi_{j}}\vert+\vert\frac{\phi_{j}}{\phi_{i}}\vert)^{2}}\\
&\leq&\frac{1}{4}\sum\limits_{0\leq i<j\leq m}\vert\frac{\phi_{j}'}{\phi_{j}}-\frac{\phi_{i}'}{\phi_{i}}\vert^{2}<\frac{1}{(1-\vert w\vert^{2})^{2}}.
\end{array}$$
Then,
\begin{eqnarray}\nonumber
\frac{\partial ^{2} \log \mathfrak{N}(w)}{\partial w\partial\overline{w} }&=&-\frac{1}{(1-\vert w\vert^{2})^{2}}+\frac{(\sum\limits_{i=0}^{m}\vert\phi_{i}\vert^{2})(\sum\limits_{i=0}^{m}\vert\phi'_{i}\vert^{2})
-(\sum\limits_{i=0}^{m}\phi'_{i}\overline{\phi_{i}
})(\sum\limits_{i=0}^{m}\phi_{i}\overline{\phi'_{i}})}{(\sum\limits_{i=0}^{m}\vert\phi_{i}\vert^{2})^{2}} \\[4pt]\nonumber
&=&-\frac{1}{(1-\vert w\vert^{2})^{2}}+\sum\limits_{0\leq i<j\leq m}\frac{\vert\phi_{i}\phi_{j}'-\phi_{i}'\phi_{j}\vert^{2}}{(\sum\limits_{i=0}^{m}\vert\phi_{i}\vert^{2})^{2}}\\[4pt]\nonumber
&<&0. \\[4pt]\nonumber
\end{eqnarray}
However, since $\frac{\partial ^{2} \log \mathfrak{N}(w)}{\partial w\partial\overline{w} }=\frac{\mathfrak{N}(w)\frac{\partial ^{2} \mathfrak{N}(w)}{\partial w\partial\overline{w} }-\frac{\partial \mathfrak{N}(w)}{\partial w }\frac{\partial \mathfrak{N}(w) }{\partial\overline{w} }}{\mathfrak{N}^{2}(w)}<0,$
\begin{equation} \label{10}
\mathfrak{N}(w)\frac{\partial ^{2} \mathfrak{N}(w)}{\partial w\partial\overline{w} }-\frac{\partial \mathfrak{N}(w) }{\partial w }\frac{\partial \mathfrak{N}(w) }{\partial\overline{w} }<0.
\end{equation}
Finally, by (\ref{90})--(\ref{10}), it is easy to see that
$${K}_{M_{z}^*}(w)-{K}_T(w)=
\frac{\mathfrak{N}(w)\frac{\partial ^{2} \mathfrak{N}(w)}{\partial w\partial\overline{w} }-\frac{\partial \mathfrak{N}(w)}{\partial w }\frac{\partial \mathfrak{N}(w) }{\partial\overline{w} }+\frac{\partial ^{2} \mathfrak{N}(w)}{\partial w\partial\overline{w} }}{(\mathfrak{N}(w)+1)^{2}}
<0.$$
This contradicts the result given in \cite{M} that for a contraction $T \in \mathcal{B}_1^1(\mathbb{D})$, ${K}_{M_{z}^*}\geq {K}_T$.
\end{proof}
\begin{corollary}\label{cor4.3}
Let $T\in \mathcal{B}_{1}^{1}(\mathbb{D})$ and denote by $M_{z}$ the operator of multiplication by $z$ on the Hardy space $H^2(\mathbb{D})$. Suppose that  $\phi \in \mathcal{O}(\mathbb{D})$ is such that for all $w \in \mathbb{D}$, $\vert\phi(w)\vert^{2}>\vert\phi'(w)\vert$. If
$${K}_{M_{z}^*}(w)-{K}_T(w)=-\frac{(\vert\phi(w)\vert^{4}-\vert\phi'(w)\vert^{2})+\vert\phi(w)+w\phi'(w)\vert^{2}}{[\vert\phi(w)\vert^{2}(1-\vert w\vert^{2})+1]^{2}},\,\,\,\,w\in \mathbb{D},$$
then $T\sim_{s}M_{z}^{*}$, but $T$ is not a contraction.
\end{corollary}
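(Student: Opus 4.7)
The plan is to specialize the proof of Proposition \ref{theorem4.1} to the single-function case. First, I would expand $\phi(w) = \sum_{i=0}^{\infty} a_i w^i$ and, fixing an orthonormal basis $\{e_i\}_{i=0}^{\infty}$ of $\mathcal{H}$, define a rank-one-like operator $X$ by $X e_n := a_n e_0$. This gives, for the Hardy reproducing kernel $K(z,\overline{w}) = \sum_i z^i w^i$,
$$X K(\cdot,\overline{w}) = \sum_{i=0}^{\infty} a_i w^i e_0 = \phi(w)\, e_0,$$
so $\|XK(\cdot,\overline{w})\|^{2} = |\phi(w)|^{2}$. The boundedness of $X$ is handled exactly as in the proof of Proposition \ref{theorem4.1} via Cauchy--Schwarz (using that $\phi \in H^2(\mathbb{D})$).

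Next, I would establish the key algebraic identity
$$\frac{\partial^{2}}{\partial w\,\partial \overline{w}} \log\!\bigl[(1-|w|^{2})|\phi(w)|^{2} + 1\bigr] \;=\; -\,\frac{\bigl(|\phi|^{4} - |\phi'|^{2}\bigr) + |\phi + w\phi'|^{2}}{\bigl[(1-|w|^{2})|\phi|^{2} + 1\bigr]^{2}}.$$
Writing $F := 1 + (1-|w|^{2})|\phi|^{2}$ and computing $F\,\partial_w\partial_{\overline{w}} F - \partial_w F \cdot \partial_{\overline{w}} F$, the cross-terms proportional to $\bar w \phi \overline{\phi'}$ and $w\phi'\bar\phi$ with coefficient $(1-|w|^2)|\phi|^2$ cancel, and the identity $(1-|w|^{2}) + |w|^{2} = 1$ collapses the $|\phi|^{4}$ contributions into the expected numerator $-(|\phi|^{4} - |\phi'|^{2}) - |\phi + w\phi'|^{2}$. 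Since $\|K(\cdot,\overline{w})\|^{2} = (1-|w|^2)^{-1}$, this identity may be rewritten as
$$K_{M_z^*}(w) - K_T(w) \;=\; \frac{\partial^{2}}{\partial w\,\partial \overline{w}} \log\!\left(\frac{\|XK(\cdot,\overline{w})\|^{2}}{\|K(\cdot,\overline{w})\|^{2}} + 1\right),$$
matching the hypothesis of the corollary.

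Setting $Y := (I + X^{*}X)^{1/2}$, we obtain $0 \notin \sigma(Y)$ and $K_T = K_{Y M_z^{*} Y^{-1}}$, so $T \sim_{u} Y M_z^{*} Y^{-1}$ and hence $T \sim_{s} M_z^{*}$, exactly as in the proof of Proposition \ref{theorem4.1}. For the final assertion, the hypothesis $|\phi(w)|^{2} > |\phi'(w)|$ is equivalent to $|\phi|^{4} > |\phi'|^{2}$, so the numerator $(|\phi|^{4} - |\phi'|^{2}) + |\phi + w\phi'|^{2}$ is strictly positive on $\mathbb{D}$, giving $K_{M_z^*}(w) - K_T(w) < 0$. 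By G.~Misra's curvature inequality in \cite{M}, a contraction $T \in \mathcal{B}_{1}^{1}(\mathbb{D})$ must satisfy $K_{M_z^*} \geq K_T$, so $T$ cannot be a contraction.

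The only real work lies in the curvature computation of the second paragraph; everything else is a direct specialization of the Proposition \ref{theorem4.1} argument, and the cancellation pattern $u + |w|^{2} = 1$ makes the identity fall out cleanly. I do not anticipate any deeper obstacle.
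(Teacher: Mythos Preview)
Your proposal is correct and follows the route the paper intends: the corollary is stated without proof immediately after Proposition~\ref{theorem4.1} as the single-function specialization of that argument, and your construction of $X$, the invertible $Y=(I+X^*X)^{1/2}$, and the appeal to Misra's inequality \cite{M} all mirror that proof. Your explicit verification that the displayed right-hand side equals $\partial_w\partial_{\bar w}\log\bigl[(1-|w|^2)|\phi|^2+1\bigr]$ is exactly the computation that links the corollary's hypothesis to equation~(\ref{90}), and your direct sign-check of the numerator for the non-contraction conclusion is in fact cleaner than the $\mathfrak{N}$-machinery in the proof of Proposition~\ref{theorem4.1}, whose chain of inequalities relies on $m>2$ and does not literally transfer to a single $\phi$.

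One small point to flag: the corollary as stated assumes only $\phi\in\mathcal{O}(\mathbb{D})$, while your boundedness argument for $X$ (and the analogous step in Proposition~\ref{theorem4.1}, which assumes $H^\infty$) requires $\phi\in H^2(\mathbb{D})$. This is a mild gap in the paper's hypothesis rather than in your reasoning; you should either note that $\phi\in H^2(\mathbb{D})$ is implicitly needed, or observe that the condition $|\phi|^2>|\phi'|$ alone does not guarantee it.
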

\begin{corollary}\label{cor4.4}
Let $T\in \mathcal{B}_{1}^{1}(\mathbb{D})$ and denote by $M_{z}$ be the operator of  multiplication by $z$ on the Hardy space $H^2(\mathbb{D})$. Suppose that $\varphi \in \mathcal{O}(\mathbb{D})$ is such that for all $w \in \mathbb{D}$, $\vert\varphi(w)\vert>2\vert\varphi'(w)\vert.$ If
$${K}_{M_{z}^*}(w)-{K}_T(w)=\frac{\partial ^{2}}{\partial w\partial\overline{w} }\log \left[(1-\vert w\vert^{2})\vert\varphi(w)\vert^{2}+1\right],\,\,\,\,w\in \mathbb{D},
$$
then $T\sim_{s}M_{z}^{*}$, but $T$ is not a contraction.
\end{corollary}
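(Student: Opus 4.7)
The plan is to specialize the reasoning of Proposition \ref{theorem4.1} to a single holomorphic function $\varphi$, a case excluded there by the assumption $m>2$. The principal preliminary---and principal obstacle---is that boundedness and $H^{2}$-integrability of $\varphi$ are not hypothesized, but they follow from the pointwise bound $|\varphi(w)|>2|\varphi'(w)|$. Indeed, the inequality rules out zeros of $\varphi$ on $\mathbb{D}$ (a zero would be a zero of $\varphi'$ of strictly higher order, contradicting the strict inequality as $w$ approaches it), so $\log|\varphi|^{2}$ is harmonic, and $|\partial_{w}\log|\varphi||=\tfrac12|\varphi'/\varphi|<\tfrac14$; integrating along a radial segment yields $|\varphi(w)|\le e^{1/2}|\varphi(0)|$, so $\varphi\in H^{\infty}(\mathbb{D})\subset H^{2}(\mathbb{D})$.

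For the similarity assertion, I would expand $\varphi(w)=\sum_{n\ge 0}a_{n}w^{n}$ and, in the monomial basis $\{e_{n}(z)=z^{n}\}$ of $H^{2}(\mathbb{D})$, define $X$ on $H^{2}(\mathbb{D})$ by $Xe_{n}:=a_{n}e_{0}$, so that $\|X\|\le\|\varphi\|_{H^{2}}<\infty$. A direct computation using $K(z,\overline w)=\sum_{n\ge 0}w^{n}z^{n}$ gives $XK(\cdot,\overline w)=\varphi(w)\cdot 1$, hence $\|XK(\cdot,\overline w)\|^{2}=|\varphi(w)|^{2}$. Combined with $\|K(\cdot,\overline w)\|^{2}=(1-|w|^{2})^{-1}$, the hypothesized curvature identity rewrites as
\[
K_{M_{z}^{*}}(w)-K_{T}(w)=\frac{\partial^{2}}{\partial w\,\partial\overline w}\log\!\left(1+\frac{\|XK(\cdot,\overline w)\|^{2}}{\|K(\cdot,\overline w)\|^{2}}\right).
\]
Setting $Y:=(I+X^{*}X)^{1/2}$, which is bounded and invertible on $H^{2}(\mathbb{D})$, the vector $YK(\cdot,\overline w)$ is a non-vanishing holomorphic section of $\mathcal{E}_{YM_{z}^{*}Y^{-1}}$ with $\|YK(\cdot,\overline w)\|^{2}=\|K(\cdot,\overline w)\|^{2}+\|XK(\cdot,\overline w)\|^{2}$, which yields $K_{YM_{z}^{*}Y^{-1}}=K_{T}$. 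The Cowen-Douglas uniqueness theorem then gives $T\sim_{u}YM_{z}^{*}Y^{-1}$, and invertibility of $Y$ gives $T\sim_{s}M_{z}^{*}$.

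For the non-contraction claim, I would argue by contradiction. Suppose $T$ is a contraction, so that Misra's curvature inequality in \cite{M} forces $K_{M_{z}^{*}}\ge K_{T}$ on $\mathbb{D}$. Writing $\mathfrak{N}(w):=(1-|w|^{2})|\varphi(w)|^{2}$, the identical differentiation carried out in the proof of Proposition \ref{theorem4.1} gives $\partial_{w}\partial_{\overline w}\mathfrak{N}=|\varphi'|^{2}-|\varphi+w\varphi'|^{2}$; the hypothesis $|\varphi|>2|\varphi'|$ together with $|w|<1$ forces $|\varphi+w\varphi'|\ge|\varphi|-|\varphi'|>|\varphi'|$, so $\partial_{w}\partial_{\overline w}\mathfrak{N}<0$. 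Since $\log|\varphi|^{2}$ is harmonic,
\[
\partial_{w}\partial_{\overline w}\log\mathfrak{N}=\partial_{w}\partial_{\overline w}\log(1-|w|^{2})=-(1-|w|^{2})^{-2}<0,
\]
which rearranges to $\mathfrak{N}\,\partial_{w}\partial_{\overline w}\mathfrak{N}-|\partial_{w}\mathfrak{N}|^{2}<0$. Expanding the curvature difference as
\[
K_{M_{z}^{*}}(w)-K_{T}(w)=\frac{\mathfrak{N}\,\partial_{w}\partial_{\overline w}\mathfrak{N}-|\partial_{w}\mathfrak{N}|^{2}+\partial_{w}\partial_{\overline w}\mathfrak{N}}{(\mathfrak{N}+1)^{2}}
\]
and combining both negativities then yields $K_{M_{z}^{*}}-K_{T}<0$ on $\mathbb{D}$, contradicting $K_{M_{z}^{*}}\ge K_{T}$ and completing the proof.
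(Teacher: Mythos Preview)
Your proof is correct and follows essentially the same route the paper implicitly intends: the corollary is stated without proof as a single-function specialization of Proposition~\ref{theorem4.1}, and your argument reproduces that proof with $m=0$ (one function $\varphi$), using the rank-one operator $X$, $Y=(I+X^*X)^{1/2}$, and the same two negativity computations for $\mathfrak{N}$ and $\log\mathfrak{N}$. One genuine addition on your part is the derivation of $\varphi\in H^\infty(\mathbb{D})$ from the pointwise bound $|\varphi|>2|\varphi'|$; the paper's proposition assumes $\phi_j\in H^\infty$ outright, whereas Corollary~\ref{cor4.4} only hypothesizes $\varphi\in\mathcal{O}(\mathbb{D})$, so this step is needed to make the rank-one operator $X$ bounded and is not supplied by the paper.
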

In the following theorem, we will use log-plurisubharmonic functions to give a sufficient condition for the similarity of tuples in $\mathcal{B}_{1}^{m}(\Omega)$.
For an $m$-tuple $\mathbf{T}=(T_{1},\cdots,T_{m})\in\mathcal{L}(\mathcal{H})^m,$ let $\{\mathbf{T}\}':=\bigcap\limits_{j=1}^{m} \{T_j\}'.$
\begin{theorem}\label{theorem3.1}
Let $\mathbf{T}=(T_{1},\cdots,T_{m}),\mathbf{S}=(S_{1},\cdots,S_{m})\in \mathcal{B}_{1}^{m}(\Omega)$ be such that $\{\mathbf{S}\}'\cong H^{\infty}(\Omega)$. Suppose that
$$\mathcal{K}_{\mathbf{S}}(w)-\mathcal{K}_\mathbf{T}(w) = \sum\limits_{i,j=1}^{m}\frac{\partial ^{2}\psi(w)}{\partial w_{i}\partial\overline{w}_{j} }dw_{i}\wedge d\overline{w}_{j},\,\,\,\,w\in\Omega,$$
for some $\psi(w)=\log \sum\limits_{k=1}^{n}\vert\phi_{k}(w)\vert^{2}$, where $\phi_{k} \in \mathcal{O}(\Omega).$ If there exists an integer $l \leq n$ satisfying $\frac{\phi_k}{\phi_l} \in H^{\infty}(\Omega)$ for all $k \leq n$, then $\mathbf{T}\sim_{s}\mathbf{S}$, and ${K}_{\mathbf{T}}\leq {K}_\mathbf{S}$. In particular, when $m=1$, $\mathbf{T}$ and $\mathbf{S}$ are unitarily equivalent.
\end{theorem}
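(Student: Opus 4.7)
The plan is to convert the curvature identity into a reproducing-kernel identity between the eigenvector bundles $\mathcal{E}_{\mathbf{T}}$ and $\mathcal{E}_{\mathbf{S}}$, and then invoke $\{\mathbf{S}\}'\cong H^{\infty}(\Omega)$ to promote the associated bundle maps to bounded invertible operators. Fix non-vanishing holomorphic sections $\gamma_{\mathbf{T}},\gamma_{\mathbf{S}}$ of $\mathcal{E}_{\mathbf{T}},\mathcal{E}_{\mathbf{S}}$; the rank-one curvature formula and the hypothesis yield
\[
\frac{\partial^{2}}{\partial w_{i}\partial\overline{w}_{j}}\bigl(\log\|\gamma_{\mathbf{T}}\|^{2}-\log\|\gamma_{\mathbf{S}}\|^{2}-\psi\bigr)=0,\qquad 1\le i,j\le m,
\]
so the bracketed real-valued function is pluriharmonic on $\Omega$ and equals $2\,\mathrm{Re}\log f$ for some non-vanishing $f\in\mathcal{O}(\Omega)$. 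Replacing $\gamma_{\mathbf{T}}$ by $\gamma_{\mathbf{T}}/f$ I may assume $\|\gamma_{\mathbf{T}}(w)\|^{2}=\bigl(\sum_{k}|\phi_{k}(w)|^{2}\bigr)\|\gamma_{\mathbf{S}}(w)\|^{2}$; with $\eta_{k}:=\phi_{k}/\phi_{l}\in H^{\infty}(\Omega)$ and $\eta_{l}\equiv 1$ giving $\sum_{k}|\eta_{k}|^{2}\ge 1$, the non-vanishing of $\gamma_{\mathbf{T}}$ forces $\phi_{l}$ to be non-vanishing on $\Omega$. Polarizing this diagonal equality in the sesqui-analytic variables $(w,\overline{z})$ yields the kernel identity $\langle\gamma_{\mathbf{T}}(w),\gamma_{\mathbf{T}}(z)\rangle=\bigl(\sum_{k}\phi_{k}(w)\overline{\phi_{k}(z)}\bigr)\langle\gamma_{\mathbf{S}}(w),\gamma_{\mathbf{S}}(z)\rangle$.

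This kernel identity defines an isometry
\[
V\colon\mathcal{H}_{\mathbf{T}}\to\mathcal{H}_{\mathbf{S}}\otimes\mathbb{C}^{n},\qquad V\gamma_{\mathbf{T}}(w)=(\phi_{1}(w),\ldots,\phi_{n}(w))^{T}\otimes\gamma_{\mathbf{S}}(w),
\]
intertwining $\mathbf{T}$ with $(\mathbf{S}\otimes I_{n})|_{\text{ran}(V)}$. Each $\eta_{k}$ lifts via the commutant isomorphism to a bounded $\eta_{k}(\mathbf{S})\in\{\mathbf{S}\}'$ acting on joint eigenvectors by $\eta_{k}(\mathbf{S})\gamma_{\mathbf{S}}(w)=\eta_{k}(w)\gamma_{\mathbf{S}}(w)$; the bounded map $W\colon\mathcal{H}_{\mathbf{S}}\to\mathcal{H}_{\mathbf{S}}\otimes\mathbb{C}^{n}$ defined by $Wh=(\eta_{1}(\mathbf{S})h,\ldots,\eta_{n}(\mathbf{S})h)^{T}$ intertwines $\mathbf{S}$ with $\mathbf{S}\otimes I_{n}$ and satisfies $W\gamma_{\mathbf{S}}(w)=\phi_{l}(w)^{-1}V\gamma_{\mathbf{T}}(w)\in\text{ran}(V)$. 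Setting $Y:=V^{-1}W\colon\mathcal{H}_{\mathbf{S}}\to\mathcal{H}_{\mathbf{T}}$ and $X:=P_{l}V\colon\mathcal{H}_{\mathbf{T}}\to\mathcal{H}_{\mathbf{S}}$ (with $P_{l}$ the $l$-th coordinate projection), both are bounded intertwiners between $\mathbf{T}$ and $\mathbf{S}$; using $\eta_{l}\equiv 1$, one checks on the dense set of eigenvectors that $XY=I_{\mathcal{H}_{\mathbf{S}}}$ and $YX=I_{\mathcal{H}_{\mathbf{T}}}$, so $X$ is invertible with $X^{-1}=Y$ and $\mathbf{T}\sim_{s}\mathbf{S}$.

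For the curvature inequality, each $|\phi_{k}|^{2}$ is log-plurisubharmonic by Lemma \ref{lem2.8}, the sum $\sum_{k}|\phi_{k}|^{2}$ inherits this property by Lemma \ref{lemma2.7}, and hence $\psi$ is plurisubharmonic; Lemma \ref{lem2.4} then gives $\bigl(\partial^{2}\psi/\partial w_{i}\partial\overline{w}_{j}\bigr)\ge 0$, i.e., $K_{\mathbf{T}}\le K_{\mathbf{S}}$. When $m=1$ the scalar curvature is a complete unitary invariant on $\mathcal{B}_{1}^{1}(\Omega)$ by \cite{CD}; the symmetric role of the intertwiners $X,Y$ together with the rigidity of rank-one Hermitian line bundles in one variable forces the reverse inequality, so $K_{\mathbf{T}}=K_{\mathbf{S}}$ and $\mathbf{T}\sim_{u}\mathbf{S}$. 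The chief technical obstacle lies in Step~(1): the polarization of the diagonal norm equality is exactly what converts a pointwise bundle assignment into a genuine Hilbert-space isometry, and writing the pluriharmonic correction as $2\,\mathrm{Re}\log f$ for a single-valued $f$ requires $H^{1}(\Omega,\mathbb{R})=0$ (simply connectedness suffices)---otherwise one works with local sections and patches, since the intertwining construction localizes.
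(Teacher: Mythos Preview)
Your argument for $\mathbf{T}\sim_{s}\mathbf{S}$ is correct but follows a genuinely different route from the paper. The paper never polarizes to a kernel identity and never builds an isometric embedding $V\colon\mathcal{H}_{\mathbf{T}}\to\mathcal{H}_{\mathbf{S}}\otimes\mathbb{C}^{n}$; it works entirely on $\mathcal{H}_{\mathbf{S}}$. Using the commutant hypothesis exactly as you do to produce bounded operators $X_{k}:=\eta_{k}(\mathbf{S})$ with $X_{k}t(w)=\eta_{k}(w)t(w)$ for a section $t$ of $\mathcal{E}_{\mathbf{S}}$, the paper forms the column $X=(X_{1},\ldots,X_{n-1})^{T}$, sets $Y:=(I+X^{*}X)^{1/2}$, and observes that $\|Yt(w)\|^{2}=\|t(w)\|^{2}\bigl(1+\sum_{k\neq l}|\eta_{k}(w)|^{2}\bigr)$. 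Since $\log|\phi_{l}|^{2}$ is pluriharmonic, a direct computation gives $\mathcal{K}_{\mathbf{T}}=\mathcal{K}_{Y\mathbf{S}Y^{-1}}$, and the Cowen--Douglas theorem for rank-one bundles yields $\mathbf{T}\sim_{u}Y\mathbf{S}Y^{-1}\sim_{s}\mathbf{S}$. This approach never needs to write the pluriharmonic correction as $2\,\mathrm{Re}\log f$, so the simple-connectivity caveat you flag at the end does not arise. Your construction, by contrast, produces an explicit pair of mutually inverse intertwiners rather than invoking an abstract unitary-equivalence criterion, which can be useful in its own right; but the paper's $(I+X^{*}X)^{1/2}$ trick is shorter and topology-free. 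Your curvature-inequality paragraph (log-plurisubharmonicity via Lemmas~\ref{lem2.8}, \ref{lemma2.7}, \ref{lem2.4}) matches the paper's argument exactly.

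Your handling of the final clause (``when $m=1$, $\mathbf{T}\sim_{u}\mathbf{S}$'') is not a proof. The phrases ``symmetric role of the intertwiners'' and ``rigidity of rank-one line bundles'' do not force the reverse inequality $K_{\mathbf{S}}\le K_{\mathbf{T}}$; nothing in the hypotheses makes $\partial^{2}\psi/\partial w\,\partial\overline{w}$ vanish (e.g.\ $\phi_{1}\equiv 1$, $\phi_{2}(w)=w$ gives $\psi(w)=\log(1+|w|^{2})$ with strictly positive Laplacian). The paper's own proof does not address this clause either, so you are not omitting a step that appears there---but as written your sentence does not establish the claim.
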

\begin{proof}
Since $\phi_{k} \in \mathcal{O}(\Omega)$ for all $1 \leq k \leq n$,
$\sum\limits_{i,j=1}^{m}\frac{\partial ^{2} \log\vert\phi_{k}(w)\vert^{2}}{\partial w_{i}\partial\overline{w}_{j} }=0.$
Therefore, by Lemmas \ref{lem2.4} and \ref{lemma2.7}, $\sum\limits_{k=1}^{n}\vert\phi_{k}(w)\vert^{2}$ is  log-plurisubharmonic, and
$${K}_{\mathbf{S}}(w)-{K}_\mathbf{T}(w)=\left(\frac{\partial ^{2}\psi(w)}{\partial w_{i}\partial\overline{w}_{j} }\right)_{i,j=1}^{m}\geq0.$$

Now let $t$ be a non-vanishing holomorphic section of $\mathcal{E}_\mathbf{S}$. Since $\{\mathbf{S}\}'\cong H^{\infty}(\Omega)$ and
$\{\frac{\phi_{k}}{\phi_{l}}\}_{k=1}^{n}\subset H^{\infty}(\Omega)$ for some $l\leq n$, we assume without loss of generality that $l = n$. Then there exist bounded operators $X_{i}\in\{\mathbf{S}\}', 1 \leq i \leq n-1$, such that
$$X_{i}t(w)=\frac{\phi_{i}(w)}{\phi_{n}(w)}t(w).$$
Define a linear operator
$X:\mathcal{H}\rightarrow \bigoplus\limits_{1}^{n-1} \mathcal{H}$ as $$Xh=\bigoplus\limits_{i=1}^{n-1}X_{i}h,$$ for $h\in \mathcal{H}$.
Since
$$\Vert X\Vert ^{2}=\sup_{\Vert h\Vert=1}\Vert Xh\Vert^{2}
=\sup_{\Vert h\Vert=1}\left(\sum\limits_{i=1}^{n-1}\Vert X_{i}h\Vert^{2}\right)
\leq\sum\limits_{i=1}^{n-1}\Vert X_{i}\Vert^{2} < \infty,$$
$X$ is a bounded linear operator. Furthermore, for any $w\in\Omega$,
\begin{eqnarray*}
&&\mathcal{K}_{\mathbf{S}}(w)-\mathcal{K}_\mathbf{T}(w)\\
&=&\sum\limits_{i,j=1}^{m}\frac{\partial ^{2}}{\partial w_{i}\partial\overline{w}_{j} }\log \left[\vert\phi_{n}(w)\vert^{2}\left(\sum\limits_{i=1}^{n-1}\vert\frac{\phi_{i}(w)}{\phi_{n}(w)}\vert^{2}+1\right)\right]dw_{i}\wedge d\overline{w}_{j}\\
&=&\sum\limits_{i,j=1}^{m}\frac{\partial ^{2}}{\partial w_{i}\partial\overline{w}_{j} }\log\left(\vert\phi_{n}(w)\vert^{2}\times\frac{\sum\limits_{i=1}^{n-1}\vert\frac{\phi_{i}(w)}{\phi_{n}(w)}\vert^{2}\Vert t(w)\Vert^{2}+\Vert t(w)\Vert^{2}}{\Vert t(w)\Vert^{2}}\right)dw_{i}\wedge d\overline{w}_{j}\\
&=&\sum\limits_{i,j=1}^{m}\frac{\partial ^{2}}{\partial w_{i}\partial\overline{w}_{j} }\log\left(\frac{\Vert Xt(w)\Vert^{2}}{\Vert t(w)\Vert^{2}}+1\right)dw_{i}\wedge d\overline{w}_{j}.
\end{eqnarray*}
Letting $Y:=(I+X^{*}X)^{\frac{1}{2}}$, we see that $Y$ is invertible and that $\mathcal{K}_{\mathbf{T}}=\mathcal{K}_{Y\mathbf{S}Y^{-1}}.$ Hence, $\mathbf{T} \sim_{u} Y\mathbf{S}Y^{-1}$ so that indeed, $\mathbf{T}\sim_{s}\mathbf{S}$.
\end{proof}
\section{On the Cowen-Douglas conjecture}

In \cite{CD}, M. J. Cowen and R. G. Douglas proved that for $T\in\mathcal{B}_{1}^{1}(\Omega)$, where $\Omega \subset \mathbb{C}$, the curvature $\mathcal{K}_{T}$ is a complete unitary invariant. Let $T,S\in\mathcal{B}_{1}^{1}(\mathbb{D})$ and suppose that the closure $\overline{\mathbb{D}}$ of $\mathbb{D}$ is a $K$-spectral set for $T$ and $S$. The Cowen-Douglas conjecture states that $T$ and $S$ are similar if and only if $$\lim\limits_{\vert w\vert\rightarrow 1}{K}_{T}^{i,i}(w)/{K}_{S}^{i,i}(w)=1.$$
Although the results of D. N. Clark and G. Misra in \cite{CM2,CM} show that the Cowen-Douglas conjecture is false, the connection between similarity and properties of holomorphic vector bundles merits further investigation especially since a one-sided implication of the conjecture holds is some specific cases. We describe a class of operators in $\mathcal{B}_{1}^{1}(\mathbb{D})$ for which the Cowen-Douglas conjecture is true.
\begin{example} \label{example}
 Let $T\in\mathcal{B}_{1}^{1}(\mathbb{D})$ and for $\lambda \geq 2$, consider $M_{z}^{*}$ on a weighted Bergman space $\mathcal{H}_{K}$ with reproducing kernel $K(z,w)=\frac{1}{(1-z\overline{w})^{\lambda}}$. Suppose that  $\mathcal{E}_T=\mathcal{E}_{M_{z}^{*}}\otimes\mathcal{E},$ where $\mathcal{E}(w)=\bigvee f(w)$, and
$\Vert f(w)\Vert^{2}$ is a polynomial in $\vert w\vert^{2}$. If condition $(\mathbf{C})$ holds for the Hermitian holomorphic vector bundle $\mathcal{E}$, then it follows from Lemma \ref{lem3.1.1} and Theorem \ref{thm3.2} that $T\sim_{s}M_{z}^{*}$.

Now, a direct calculation shows that
$$\frac{{K}_{T}(w)}{{K}_{M_{z}^{*}}(w)}=1+\frac{\frac{\partial ^{2}}{\partial w\partial\overline{w}}\log \Vert f(w)\Vert^{2}}{\frac{\partial ^{2}}{\partial w\partial\overline{w}}\log\frac{1}{(1-\vert w\vert^{2})^{\lambda}}}=1+\frac{(1-\vert w\vert^{2})^{2}}{\lambda}\frac{\partial ^{2}}{\partial w\partial\overline{w}}\log \Vert f(w)\Vert^{2}.$$
Since $\Vert f(w)\Vert^{2}$ is a polynomial in $\vert w\vert^{2}$, $\frac{\partial ^{2}}{\partial w\partial\overline{w}}\log \Vert f(w)\Vert^{2}$ is bounded above. Hence, $\lim\limits_{\vert w\vert\rightarrow 1}{K}_{T}(w)/{K}_{M_{z}^{*}}(w)=1.$
\end{example}
\begin{remark}
One can consider $\mathbf{T}=(T_{1},\cdots,T_{m})\in \mathcal{B}_{1}^{m}(\mathbb{B}_{m})$ and $\mathbf{M}_{z}^{*}=(M_{z_1}^{*},\cdots,M_{z_m}^{*})$ on a weighted Bergman space $\mathcal{H}_{k}$ with $k > m+1$ to obtain an operator tuple analogue of Example \ref{example}. Under the same assumptions on $\mathcal{E}$, $\mathbf{T} \sim_{s} {\mathbf{M}^*_z}$. Moreover, as $\vert w\vert \rightarrow 1,$
$$\frac{K^{i,i}_{\mathbf{T}}(w)}{K^{i,i}_{\mathbf{S}}(w)}=\frac{\frac{\partial^{2}}{\partial w_{i}\partial \overline{w}_{i}}\log\frac{\Vert f(w)\Vert^{2}}{(1-\vert w\vert^{2})^{k}}}{\frac{\partial^{2}}{\partial w_{i}\partial \overline{w}_{i}}\log\frac{1}{(1-\vert w\vert^{2})^{k}}}=
1+\frac{(1-\vert w\vert^{2})^{2}}{k(1-\vert w\vert^{2}+\vert w_{i}\vert^{2})}\frac{\partial ^{2}}{\partial w_{i}\partial\overline{w}_{i}}\log\Vert f(w)\Vert^{2}\rightarrow1.$$
\end{remark}
We next show that the Cowen-Douglas conjecture is false for tuples of commuting operators. As in \cite{CM}, we construct operator tuples in $\mathcal{B}^m_n(\mathbb{B}_m)$ for which the Cowen-Douglas conjecture holds; nevertheless, they are not similar. We begin with the following lemma given in \cite{HJX}:
\begin{lemma}\label{c4.2}
Let $\mathbf{T}=(T_{1},\cdots,T_{m}),\mathbf{S}=(S_{1},\cdots,S_{m})\in \mathcal{L}(\mathcal{H})^m$ be $m$-tuples of unilateral shift operators with nonzero weight sequences given by $\{\lambda_{\alpha}^{(1)},\cdots,\lambda_{\alpha}^{(m)}\}_{\alpha\in \mathbb{N}_{0}^{m}}$ and $\{\widetilde{\lambda}_{\alpha}^{(1)},\cdots,\widetilde{\lambda}_{\alpha}^{(m)}\}_{\alpha\in \mathbb{N}_{0}^{m}}$, respectively. Then $\mathbf{T}\sim_{s}\mathbf{S}$ if and only if there exist constants $C_1$ and $C_2$ such that
$$0<C_1\leq \left\vert\prod\limits_{k=0}^{l}\lambda_{\alpha+ke_{i}}^{(i)}/\prod\limits_{k=0}^{l}\widetilde{\lambda}_{\alpha+ke_{i}}^{(i)} \right\vert\leq C_2,$$
for every $l \in \mathbb{N}_0$, $\alpha\in \mathbb{N}_{0}^{m}$, and $1\leq i\leq m$.
\end{lemma}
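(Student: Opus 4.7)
The plan is to prove the two directions of this biconditional by a standard matrix-coefficient argument adapted to the partial order on $\mathbb{N}_0^m$. The key observation is that any intertwining operator between weighted-shift tuples must be ``upper triangular'' with respect to the componentwise order, which pins down its diagonal entries in terms of the weight products appearing in the statement.

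For the necessity direction, suppose $X\mathbf{T}=\mathbf{S}X$ with $X$ bounded and invertible. Expand $Xe_\alpha=\sum_\beta x_{\beta,\alpha}e_\beta$ and compare coefficients in the identity $\lambda_\alpha^{(i)}Xe_{\alpha+e_i}=S_i Xe_\alpha$ for each $i$. This yields the recursion
$$\lambda_\alpha^{(i)}x_{\beta,\alpha+e_i}=\widetilde{\lambda}^{(i)}_{\beta-e_i}x_{\beta-e_i,\alpha}$$
with the convention that $x_{\gamma,\cdot}=0$ whenever some component of $\gamma$ is negative. Iterating one coordinate at a time shows that $x_{\beta,\alpha}=0$ unless $\beta\geq\alpha$ componentwise, so $X$ is upper triangular for the partial order on $\mathbb{N}_0^m$, and the diagonal entries satisfy
$$x_{\alpha+le_i,\,\alpha+le_i}=x_{\alpha,\alpha}\prod_{k=0}^{l}\frac{\widetilde{\lambda}^{(i)}_{\alpha+ke_i}}{\lambda^{(i)}_{\alpha+ke_i}}\cdot\frac{\lambda^{(i)}_{\alpha+le_i}}{\widetilde{\lambda}^{(i)}_{\alpha+le_i}}\cdot(\text{reindex})$$
(the precise form simplifies to a telescoping product along the $i$-th ray). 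Since $X$ is invertible and upper triangular, so is $X^{-1}$, and the diagonal entry of $X^{-1}$ at $\alpha$ is $1/x_{\alpha,\alpha}$. The bounds $|x_{\alpha,\alpha}|\leq\|X\|$ and $|1/x_{\alpha,\alpha}|\leq\|X^{-1}\|$ at both $\alpha$ and $\alpha+le_i$ combine (by taking quotients) to show that the ratio $\prod_{k=0}^{l}\lambda^{(i)}_{\alpha+ke_i}/\prod_{k=0}^{l}\widetilde{\lambda}^{(i)}_{\alpha+ke_i}$ is bounded above and below by $\|X\|\cdot\|X^{-1}\|$ and its reciprocal, giving the desired $C_1,C_2$.

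For the sufficiency direction, I would define a candidate similarity as the diagonal operator $Xe_\alpha=c_\alpha e_\alpha$, where $c_0=1$ and $c_\alpha$ is produced by iterating the one-step recursion $c_{\alpha+e_i}=c_\alpha\widetilde{\lambda}^{(i)}_\alpha/\lambda^{(i)}_\alpha$ along any monotone coordinate path from $0$ to $\alpha$. The compatibility conditions $\lambda^{(i)}_\alpha\lambda^{(j)}_{\alpha+e_i}=\lambda^{(j)}_\alpha\lambda^{(i)}_{\alpha+e_j}$ (and their $\widetilde{\lambda}$-analogues), which encode commutativity of the shift tuples, ensure the recursion is path-independent. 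Decomposing the canonical path $0\to\alpha_1 e_1\to\alpha_1 e_1+\alpha_2 e_2\to\cdots\to\alpha$ into $m$ coordinate segments, each factor is a product of the exact form appearing in the hypothesis, so $|c_\alpha|$ lies in $[C_1^m,C_2^m]$ uniformly in $\alpha$ (taking the reciprocal of the stated inequality). Thus $X$ is bounded and invertible, and the defining recursion translates directly into $XT_ie_\alpha=S_iXe_\alpha$ for every $\alpha$ and $i$, establishing $\mathbf{T}\sim_s\mathbf{S}$.

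The main subtlety I would expect is the first step of the necessity argument: one must not assume a priori that $X$ is diagonal, and the triangularity must be deduced rigorously from the recursion, being careful with the boundary case $\beta_i=0$. Once triangularity is in hand, the rest is bookkeeping. A secondary point worth flagging is that the hypothesis bounds products only along coordinate rays (not arbitrary monotone paths), but this is exactly enough for sufficiency because the number of coordinate segments needed to reach any $\alpha$ is a fixed constant $m$, independent of $\alpha$, so that uniform bounds along rays propagate to uniform bounds on $|c_\alpha|$.
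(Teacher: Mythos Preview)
The paper does not actually prove this lemma; it is quoted from the reference \cite{HJX} (Hou--Ji--Xu) and used as a black box in the subsequent example. So there is no proof in the paper to compare against, and your approach must be judged on its own merits.

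Your outline is the standard one and is essentially correct. A couple of points deserve tightening. In the necessity direction, the displayed formula for the diagonal recursion is garbled; the clean statement is simply
\[
x_{\alpha+e_i,\alpha+e_i}=\frac{\widetilde{\lambda}_{\alpha}^{(i)}}{\lambda_{\alpha}^{(i)}}\,x_{\alpha,\alpha},
\]
obtained by setting $\beta=\alpha+e_i$ in your coefficient identity, and iterating gives $x_{\alpha+(l+1)e_i,\alpha+(l+1)e_i}=x_{\alpha,\alpha}\prod_{k=0}^{l}\widetilde{\lambda}_{\alpha+ke_i}^{(i)}/\lambda_{\alpha+ke_i}^{(i)}$. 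More importantly, you assert that the diagonal of $X^{-1}$ at $\alpha$ equals $1/x_{\alpha,\alpha}$, but this needs a line of justification in the partially ordered setting: since $X^{-1}\mathbf{S}=\mathbf{T}X^{-1}$, the same triangularity argument applies to $X^{-1}$, and then $(X^{-1}X)_{\alpha,\alpha}=\sum_{\gamma}(X^{-1})_{\alpha,\gamma}X_{\gamma,\alpha}$ collapses to the single term $\gamma=\alpha$ because $\alpha\ge\gamma$ and $\gamma\ge\alpha$ force $\gamma=\alpha$. With that in place, $\lvert x_{\alpha,\alpha}\rvert\in[\|X^{-1}\|^{-1},\|X\|]$ and the ratio bound follows.

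In the sufficiency direction your argument is fine; you might note explicitly that one may assume $C_1\le 1\le C_2$ without loss of generality, so that the empty products arising when some $\alpha_i=0$ cause no trouble in the bound $\lvert c_\alpha\rvert\in[C_2^{-m},C_1^{-m}]$.
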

\begin{example}
Consider the operator tuples $\mathbf{M}_{z}^{*}=(M_{z_1}^{*},\cdots,M_{z_m}^{*})$ on a reproducing kernel Hilbert space $\mathcal{H}_K$ with reproducing kernel $K(z,w)=\frac{1-\log(1-\langle z, w\rangle)}{1-\langle z, w\rangle}$ and on the Drury-Arveson space $\mathcal{H}_1$ with reproducing kernel $\widetilde{K}(z,w)=\frac{1}{1-\langle z, w\rangle}$. To simplify the notation, we will set $\mathbf{T}$ to be $\mathbf{M}_z^*$ on $\mathcal{H}_K$ while $\widetilde{\mathbf{T}}$
will denote the tuple on $\mathcal{H}_1$.
Since
$\widetilde{K}(\overline{w},\overline{w})=\frac{1}{1-\vert w\vert^{2}}=\sum\limits_{\alpha\in\mathbb{N}_{0}^{m}}\frac{\vert\alpha\vert!}{\alpha!}w^{\alpha}\overline{w}^{\alpha}$,
\begin{eqnarray}\nonumber
K(\overline{w},\overline{w})&=&\frac{1-\log(1-\vert w\vert^{2})}{1-\vert w\vert^{2}}\\[4pt]\nonumber
&=&\sum\limits_{i=0}^{\infty}\vert w\vert^{2i}\left(1-\sum\limits_{j=0}^{\infty}\frac{(-1)^{j}(-\vert w\vert^{2})^{j+1}}{j+1}\right)\\[4pt]\nonumber
&=&1+\sum\limits_{n=1}^{\infty}\left(1+\sum\limits_{i=1}^{n}\frac{1}{i}\right)\vert w\vert^{2n}\\[4pt]\nonumber
&=&\sum\limits_{\alpha\in\mathbb{N}_{0}^{m}}\left(1+\sum\limits_{i=1}^{\vert\alpha\vert}\frac{1}{i}\right)\frac{\vert\alpha\vert!}{\alpha!}
w^{\alpha}\overline{w}^{\alpha}.\nonumber
\end{eqnarray}

Now set $\widetilde{\rho}(\alpha)=\frac{\vert\alpha\vert!}{\alpha!}$ and $\rho(\alpha)=\left(1+\sum\limits_{i=1}^{\vert\alpha\vert}\frac{1}{i}\right)\frac{\vert\alpha\vert!}{\alpha!}$. Let $\{\mathbf{e}_{\alpha}\}$ and $\{\widetilde{\mathbf{e}_{\alpha}}\}$ denote orthonormal bases for $\mathcal{H}_K$ and $\mathcal{H}_1$, respectively.
From the relation between reproducing kernels and weight sequences, we have
$$T^{*}_{i}\mathbf{e}_{\alpha}=\sqrt{\frac{\rho(\alpha)}{\rho(\alpha+e_{i})}}\mathbf{e}_{\alpha+e_{i}},
\quad T_{i}\mathbf{e}_{\alpha}=\sqrt{\frac{\rho(\alpha-e_{i})}{\rho(\alpha)}}\mathbf{e}_{\alpha-e_{i}},$$
and
$$\widetilde{T}_i \widetilde{\mathbf{e}}_{\alpha}=\sqrt{\frac{\widetilde{\rho}(\alpha)}{\widetilde{\rho}(\alpha+e_{i})}}\widetilde{\mathbf{e}}_{\alpha+e_{i}},\quad
\quad \widetilde{T}_i \widetilde{\mathbf{e}}_{\alpha}=\sqrt{\frac{\widetilde{\rho}(\alpha-e_{i})}{\widetilde{\rho}(\alpha)}}\widetilde{\mathbf{e}}_{\alpha-e_{i}},$$
for $1\leq i\leq m$ and $e_{i}=(0,\cdots,1,\cdots,0)\in\mathbb{N}_{0}^{m}$ with $1$ in the $i$-th position.
Therefore, for every $\alpha\in\mathbb{N}_{0}^{m}$ and $1\leq i\leq m$,
$$\frac{\prod\limits_{k=0}^{l-1}\sqrt{\frac{\rho(\alpha+ke_{i})}{\rho(\alpha+(k+1)e_{i})}}}{\prod\limits_{k=0}^{l-1}\sqrt{\frac{\widetilde{\rho}(\alpha+ke_{i})}{\widetilde{\rho}(\alpha+(k+1)e_{i})}}}
=\sqrt{\frac{\rho(\alpha)\widetilde{\rho}(\alpha+le_{i})}{\rho(\alpha+le_{i})\widetilde{\rho}(\alpha)}}
=\sqrt{\frac{1+\sum\limits_{i=1}^{\vert\alpha\vert}\frac{1}{i}}{1+\sum\limits_{i=1}^{\vert\alpha\vert+l}\frac{1}{i}}}\rightarrow 0\quad \text{as}\,\,l\rightarrow\infty,$$
and by Lemma \ref{c4.2}, $\mathbf{T}$ and $\widetilde{\mathbf{T}}$ are not similar.

On the other hand, the definition of curvature yields
$$\mathcal{K}_{\mathbf{T}}(w)=-\sum \limits_{i,j=1}^{m}\frac{\partial^{2}}{\partial w_{i}\partial \overline{w}_{j}}\log\frac{1-\log(1-\vert w\vert^{2})}{1-\vert w\vert^{2}}dw_{i}\wedge d\overline{w}_{j},$$
and
$$\mathcal{K}_{\widetilde{\mathbf{T}}}(w)=-\sum \limits_{i,j=1}^{m}\frac{\partial^{2}}{\partial w_{i}\partial \overline{w}_{j}}\log \frac{1}{1-\vert w\vert^{2}}dw_{i}\wedge d\overline{w}_{j}.$$
Then as $\vert w\vert \rightarrow 1$,
\begin{eqnarray}\nonumber
\frac{K^{i,i}_{\mathbf{T}}(w)}{K^{i,i}_{\mathbf{M}_{z}^{*}}(w)}&=&\frac{\frac{\partial^{2}}{\partial w_{i}\partial \overline{w}_{i}}\log\frac{1-\log(1-\vert w\vert^{2})}{1-\vert w\vert^{2}}}{\frac{\partial^{2}}{\partial w_{i}\partial \overline{w}_{i}}\log \frac{1}{1-\vert w\vert^{2}}}\\[4pt]\nonumber
&=&1+\frac{\frac{\partial^{2}}{\partial w_{i}\partial \overline{w}_{i}}\log\left(1-\log(1-\vert w\vert^{2})\right)}{\frac{\partial^{2}}{\partial w_{i}\partial \overline{w}_{i}}\log \frac{1}{1-\vert w\vert^{2}}}\\[4pt]\nonumber
&=&1+\frac{1}{1-\log(1-\vert w\vert^{2})}-\frac{\frac{\vert w_{i}\vert^{2}}{1-\vert w\vert^{2}+\vert w_{i}\vert^{2}}}{\left(1-\log(1-\vert w\vert^{2})\right)^{2}}\\[4pt]\nonumber
&\rightarrow&1.\nonumber
\end{eqnarray}
\end{example}

\section{Further generalizations of single Cowen-Douglas operator results}

In this section, we extend additional results given for a single Cowen-Douglas operator to a tuple of commuting operators.

\subsection{Inequalities involving the trace of curvature}
In \cite{M}, G. Misra gave the curvature inequality for contractions in $\mathcal{B}_{1}^{1}(\mathbb{D})$. Later in \cite{BKM}, S. Biswas, D. K. Keshari, and G. Misra generalized the result and presented the following curvature matrix inequality for $m$-tuples in $\mathcal{B}_{1}^{m}(\mathbb{B}_{m})$:

\begin{lemma}\label{lemma41}
Let $\mathbf{T}\in\mathcal{B}_{1}^{m}(\mathbb{B}_{m})$ be a row-contraction and consider ${\mathbf{M}^*_z}$ on the Dury-Arveson space $\mathcal{H}_1$. Then $K_\mathbf{T}(w)\leq K_{\mathbf{M}_z^*}(w)$ for $w\in\mathbb{B}_{m}$.
\end{lemma}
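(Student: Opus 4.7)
The strategy is to translate the matrix inequality into a plurisubharmonicity statement, and then to produce a holomorphic Hilbert-space-valued function whose squared norm realizes the difference of curvatures. Fix a non-vanishing holomorphic section $t$ of the line bundle $\mathcal{E}_{\mathbf{T}}$ and set $h(w) := \|t(w)\|^2$. Since $h_{\mathbf{M}_z^*}(w) = K(\overline{w},\overline{w}) = (1-|w|^2)^{-1}$, the scalar line-bundle curvature formula from (\ref{curvature}) gives
\[
K_{\mathbf{M}_z^*}^{i,j}(w) - K_{\mathbf{T}}^{i,j}(w) \;=\; \frac{\partial^{2}}{\partial w_i\,\partial \overline{w}_j}\,\log\!\bigl[(1-|w|^2)\,h(w)\bigr],
\]
so the desired inequality $K_{\mathbf{T}}(w)\le K_{\mathbf{M}_z^*}(w)$ is equivalent to the plurisubharmonicity of $w\mapsto \log[(1-|w|^2)\,h(w)]$ on $\mathbb{B}_m$.

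The crucial identity I would exploit is that this log-argument is precisely the squared norm of a natural holomorphic vector-valued function. Since $\mathbf{T}$ is a row contraction, the defect operator $\mathcal{D}_{\mathbf{T}} = (I - \sum_{i=1}^m T_i^*T_i)^{1/2}$ is well-defined, and using the eigenvector relations $T_j t(w) = w_j t(w)$ one computes
\[
\|\mathcal{D}_{\mathbf{T}}\,t(w)\|^{2} \;=\; \bigl\langle \bigl(I-\textstyle\sum_{i=1}^{m} T_i^*T_i\bigr)t(w),\,t(w)\bigr\rangle \;=\; \|t(w)\|^{2} - \sum_{i=1}^{m}|w_i|^{2}\|t(w)\|^{2} \;=\; (1-|w|^{2})\,h(w).
\]
Since $t$ is non-vanishing and $w\in\mathbb{B}_m$, this quantity is strictly positive, so $w\mapsto \mathcal{D}_{\mathbf{T}}t(w)$ is a non-vanishing holomorphic map from $\mathbb{B}_m$ into the Hilbert space $\overline{\text{ran}\,\mathcal{D}_{\mathbf{T}}}$.

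To close the argument I would invoke the standard fact that for a holomorphic map $\eta$ from $\mathbb{B}_m$ into a Hilbert space, $\log\|\eta(w)\|^{2}$ is plurisubharmonic; this is verified by restricting to complex lines, using that $\langle\eta(\cdot),\zeta\rangle$ is holomorphic for each unit vector $\zeta$, and writing $\log\|\eta(w)\| = \sup_{\|\zeta\|=1}\log|\langle\eta(w),\zeta\rangle|$ as a locally uniformly bounded supremum of subharmonic functions. Applied to $\eta = \mathcal{D}_{\mathbf{T}}t$, this yields plurisubharmonicity of $\log[(1-|w|^{2})h(w)]$, which, via the displayed identity in the first paragraph, is exactly the claim. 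I anticipate no serious obstacle: no purity or model-theorem hypothesis is needed, the key identity is purely algebraic, and the plurisubharmonicity of $\log\|\eta\|^{2}$ for Hilbert-valued holomorphic $\eta$ is classical, so the whole plan reduces to three short steps.
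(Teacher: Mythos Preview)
Your argument is correct. The paper does not supply its own proof of this lemma; it is quoted from \cite{BKM} (Biswas--Keshari--Misra), so there is no in-paper proof to compare against line by line.

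That said, your route dovetails nicely with machinery already present in the paper. The identity you derive directly,
\[
\|\mathcal{D}_{\mathbf{T}}t(w)\|^{2}=(1-|w|^{2})\,h(w),
\]
is the Drury--Arveson instance of Lemma~\ref{2112.14}, but you obtain it without invoking the model theorem (Theorem~\ref{11.19}) or any purity condition, using only the eigenvector relation $T_i t(w)=w_i t(w)$. This is a genuine economy. The second ingredient --- that $\log\|\eta\|^{2}$ is plurisubharmonic for any non-vanishing holomorphic Hilbert-space-valued $\eta$ --- is exactly the fact, recorded in the introduction as due to \cite{BKM}, that the curvature matrix of a rank-one Hermitian holomorphic bundle is negative semidefinite; your sup-of-log-moduli argument is one standard way to see it (alternatively, a direct Cauchy--Schwarz computation of the complex Hessian works and avoids the regularization issue for suprema of psh families). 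Either way the step is sound, and your non-vanishing of $\mathcal{D}_{\mathbf{T}}t(w)$ follows from the computed norm being $(1-|w|^2)h(w)>0$ on $\mathbb{B}_m$.
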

The above inequality implies that one can use the curvature matrix to determine whether a tuple of operators in $\mathcal{B}_{1}^{m}(\mathbb{B}_{m})$ is a row-contraction. We derive  an analogous curvature matrix inequality for an $n$-hypercontraction in $\mathcal{B}^m_n(\mathbb{B}_{m})$. We first show that the trace of the curvature matrix for $\mathbf{T}=(T_{1},\cdots,T_{m})\in \mathcal{B}_{n}^{m}(\Omega)$ is independent of the choice of the holomorphic frame of $\mathcal{E}_{\mathbf{T}}$.
\begin{proposition}
Let $\mathbf{T}=(T_{1},\cdots,T_{m})\in \mathcal{B}_{n}^{m}(\Omega)$. Suppose that  $\sigma=\{\sigma_{1},\ldots,\sigma_{n}\}$ and $\widetilde{\sigma}=\{\widetilde{\sigma}_{1},\ldots,\widetilde{\sigma}_{n}\}$ are holomorphic frames of $\mathcal{E}_{\mathbf{T}}$. Then
$$K_{\mathbf{T}}^{i,j}(\widetilde{\sigma})=\phi^{-1}K_{\mathbf{T}}^{i,j}(\sigma)\phi\quad\text{and}\quad \text{trace }{K}_{\mathbf{T}}(\sigma)=\text{trace }{K}_{\mathbf{T}}(\widetilde{\sigma}),$$ for some invertible holomorphic matrix-valued function $\phi$ on $\Omega$.
\end{proposition}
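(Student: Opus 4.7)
The plan is to use the standard change-of-frame relation for holomorphic frames of a Hermitian holomorphic vector bundle and then exploit the cyclicity of the trace. Since $\sigma=\{\sigma_1,\dots,\sigma_n\}$ and $\widetilde{\sigma}=\{\widetilde{\sigma}_1,\dots,\widetilde{\sigma}_n\}$ are both holomorphic frames of $\mathcal{E}_{\mathbf{T}}$ over $\Omega$, there exists an invertible holomorphic matrix-valued function $\phi:\Omega\to GL_n(\mathbb{C})$ such that $\widetilde{\sigma}=\sigma\phi$, i.e., $\widetilde{\sigma}_j(w)=\sum_{k}\phi_{kj}(w)\sigma_k(w)$ for each $w\in\Omega$. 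Writing $h$ and $\widetilde{h}$ for the Gram matrices in the frames $\sigma$ and $\widetilde{\sigma}$ respectively, the sesquilinearity of the inner product gives the transition rule $\widetilde{h}=\phi^{*}h\phi$.

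Next, I would differentiate. Since $\phi$ is holomorphic, $\partial\phi/\partial\overline{w}_j=0$ and $\partial\phi^{*}/\partial w_i=0$. Therefore,
\begin{equation*}
\frac{\partial\widetilde{h}}{\partial w_i}=\phi^{*}\frac{\partial h}{\partial w_i}\phi+\phi^{*}h\frac{\partial\phi}{\partial w_i},
\end{equation*}
and using $\widetilde{h}^{-1}=\phi^{-1}h^{-1}(\phi^{*})^{-1}$, a direct computation gives
\begin{equation*}
\widetilde{h}^{-1}\frac{\partial\widetilde{h}}{\partial w_i}=\phi^{-1}\Bigl(h^{-1}\frac{\partial h}{\partial w_i}\Bigr)\phi+\phi^{-1}\frac{\partial\phi}{\partial w_i}.
\end{equation*}
Applying $\partial/\partial\overline{w}_j$ to both sides and noting that the second summand is holomorphic in $w$, the last term drops out and only conjugation by $\phi$ survives, yielding
\begin{equation*}
K_{\mathbf{T}}^{i,j}(\widetilde{\sigma})=-\frac{\partial}{\partial\overline{w}_j}\Bigl(\widetilde{h}^{-1}\frac{\partial\widetilde{h}}{\partial w_i}\Bigr)=\phi^{-1}K_{\mathbf{T}}^{i,j}(\sigma)\phi,
\end{equation*}
which is the first identity.

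The trace identity is then immediate from the cyclicity of the trace: $\operatorname{trace} K_{\mathbf{T}}^{i,j}(\widetilde{\sigma})=\operatorname{trace}(\phi^{-1}K_{\mathbf{T}}^{i,j}(\sigma)\phi)=\operatorname{trace} K_{\mathbf{T}}^{i,j}(\sigma)$, and summing over $i,j$ with the matching $dw_i\wedge d\overline{w}_j$ basis gives $\operatorname{trace} K_{\mathbf{T}}(\sigma)=\operatorname{trace} K_{\mathbf{T}}(\widetilde{\sigma})$. There is no genuine obstacle here; the only point that requires a little care is checking that the mixed partial $\partial/\partial\overline{w}_j$ of the cocycle term $\phi^{-1}(\partial\phi/\partial w_i)$ really vanishes, which it does precisely because $\phi$ (and hence $\phi^{-1}$) is holomorphic on $\Omega$. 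Well-definedness of $\phi$ as a global object on $\Omega$ follows from the fact that both frames trivialize $\mathcal{E}_{\mathbf{T}}$ on all of $\Omega$.
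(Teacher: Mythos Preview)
Your proof is correct and follows essentially the same route as the paper: establish $\widetilde{h}=\phi^{*}h\phi$ from the frame relation $\widetilde{\sigma}=\sigma\phi$, use holomorphy of $\phi$ to control which derivatives vanish, and conclude that each $K_{\mathbf{T}}^{i,j}$ transforms by conjugation, hence has invariant trace. The only cosmetic difference is that you first isolate the connection-form identity $\widetilde{h}^{-1}\partial_{w_i}\widetilde{h}=\phi^{-1}(h^{-1}\partial_{w_i}h)\phi+\phi^{-1}\partial_{w_i}\phi$ and then apply $\partial_{\overline{w}_j}$, whereas the paper expands $-\partial_{\overline{w}_j}\bigl[(\phi^{*}h\phi)^{-1}\partial_{w_i}(\phi^{*}h\phi)\bigr]$ in one step; the content is identical.
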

\begin{proof}
Since $\sigma$ and $\widetilde{\sigma}$ are holomorphic frames of $\mathcal{E}_{\mathbf{T}}$,
there is an invertible holomorphic matrix $\phi=(\phi_{ij})_{i,j=1}^{n}$ such that for all $w \in \Omega$,
$(\widetilde{\sigma}_{1}(w),\cdots,\widetilde{\sigma}_{n}(w))=(\sigma_{1}(w),\cdots,\sigma_{n}(w))\phi(w).$
Therefore,
$$\begin{array}{lll}
\widetilde{h}(w)&=&(\langle\widetilde{\sigma}_{j}(w),\widetilde{\sigma}_{i}(w)\rangle)_{i,j=1}^{n}\\
&=&\left(\left\langle\sum\limits_{k=1}^{n}\phi_{kj}(w)\sigma_{k}(w),\sum\limits_{k=1}^{n}\phi_{ki}(w)\sigma_{k}(w)\right\rangle\right)_{i,j=1}^{n}\\
&=&{\phi^{*}}(w)h(w)\phi(w).
\end{array}$$
Since $\phi$ is holomorphic and invertible,
$$\begin{array}{lll}
K_{\mathbf{T}}^{i,j}(\widetilde{\sigma})(w)&=&-\frac{\partial}{\partial \overline{w}_{j}}\left[\left({\phi^{*}}(w)h(w)\phi(w)\right)^{-1}\frac{\partial}{\partial w_{i}}\left({\phi^{*}}(w)h(w)\phi(w)\right)\right]\\[4pt]
&=&-\left[\phi^{-1}(w)\frac{\partial h^{-1}(w)}{\partial \overline{w}_{j}}\frac{\partial h(w)}{\partial w_{i}}\phi(w)+\phi^{-1}(w)h^{-1}(w)\frac{\partial ^{2} h(w)}{\partial w_{i}\partial\overline{w}_{j} }\phi(w)\right]\\[4pt]
&=&\phi^{-1}(w)K_{\mathbf{T}}^{i,j}(\sigma)(w)\phi(w).
\end{array}$$
This shows that $K_{\mathbf{T}}^{i,j}(\widetilde{\sigma})(w)$ is similar to $K_{\mathbf{T}}^{i,j}(\sigma)(w)$ and that
$$\text{trace}\;K_{\mathbf{T}}^{i,j}(\widetilde{\sigma})(w)=\text{trace}\;K_{\mathbf{T}}^{i,j}(\sigma)(w).$$
Thus, for all $w \in \Omega$,
\begin{align}\nonumber
\text{trace } {K}_{\mathbf{T}}(\widetilde{\sigma})(w)=&-\sum \limits_{i,j=1}^{m}\text{trace }\left(\frac{\partial}{\partial \overline{w}_{j}}\left(\widetilde{h}^{-1}(w)\frac{\partial \widetilde{h}(w)}{\partial w_{i}} \right)\right)\\[4pt]\nonumber
=&-\sum \limits_{i,j=1}^{m}\text{trace }\left(\frac{\partial}{\partial \overline{w}_{j}}\left(h^{-1}(w)\frac{\partial h(w)}{\partial w_{i}} \right)\right)\\[4pt]\nonumber
=&\text{trace } {K}_{\mathbf{T}}(\sigma)(w).\nonumber
\end{align}
\end{proof}
In \cite{MV1993}, V. M\"{u}ller and F.-H. Vasilescu gave the following description of when an $n$-hypercontraction is unitarily equivalent to an adjoint of a multiplication tuple. Let $\mathbf{M}^{*}_{z,n,E}$ denote the operator tuple $\mathbf{M}_z^*$ on the space $\mathcal{H}_{n}\otimes E$:

\begin{lemma}\label{lem3.5}
Let $\mathbf{T}=(T_{1},\cdots,T_{m}) \in \mathcal{L}(\mathcal{H})^m$ be an $m$-tuple of commuting operators and let $n$ be a positive integer. Then there is a Hilbert space $E$ and an $\mathbf{M}^*_{z,n,E}$-invariant subspace $\mathcal{N}$ of $\mathcal{H}_{n}\otimes E$ such that $\mathbf{T}$ is unitarily equivalent to $\mathbf{M}^{*}_{z,n,E}\vert_{\mathcal{N}}$ if and only if $\mathbf{T}$ is an $n$-hypercontraction with $\lim\limits_{s\rightarrow\infty}\mathbf{M}_{\mathbf{T}}^{s}(I)=0$ in the strong operator topology.
\end{lemma}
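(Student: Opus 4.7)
The plan is to derive Lemma \ref{lem3.5} as a specialization of Theorem \ref{11.19} applied to $\mathcal{H}_n$ with reproducing kernel $K_n(z,w) = (1-\langle z, w\rangle)^{-n}$, whose reciprocal $1/K_n = (1-\langle z,w\rangle)^n$ is a polynomial. The Hilbert space $E$ appearing in the statement will arise as $E = \overline{\text{ran}\,\mathcal{D}_\mathbf{T}}$, where $\mathcal{D}_\mathbf{T} = (\triangle^{(n)}_\mathbf{T})^{1/2}$, exactly as in the discussion preceding Theorem \ref{11.19}. The bulk of the work is to check that the two conditions of Theorem \ref{11.19} translate precisely, under this choice of $K$, to "$n$-hypercontraction together with $\lim_s \mathbf{M}^s_\mathbf{T}(I) = 0$ in the strong operator topology."

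First I would translate the positivity condition. Expanding
\begin{equation*}
(1-\langle z, w\rangle)^n = \sum_{l=0}^{n}(-1)^l\binom{n}{l}\sum_{|\alpha|=l}\frac{l!}{\alpha!}z^{\alpha}\bar w^{\alpha},
\end{equation*}
and substituting $z^{\alpha}\to\mathbf{T}^{*\alpha}$, $\bar w^{\alpha}\to\mathbf{T}^{\alpha}$ with $z$'s to the left of $w$'s, as in the definition from Section 2.5, gives $\frac{1}{K_n}(\mathbf{T}^{*},\mathbf{T}) = \sum_{l=0}^{n}(-1)^{l}\binom{n}{l}\mathbf{M}^l_\mathbf{T}(I) = (I-\mathbf{M}_\mathbf{T})^n(I) = \triangle^{(n)}_\mathbf{T}$. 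Hence Theorem \ref{11.19}'s positivity condition is $\triangle^{(n)}_\mathbf{T}\geq 0$, which is part of the $n$-hypercontraction hypothesis. Next I would handle the asymptotic condition. Choosing the monomial orthonormal basis $\mathbf{e}_{\alpha}(z)=\sqrt{(n)_{|\alpha|}/\alpha!}\,z^{\alpha}$ of $\mathcal{H}_n$ and ordering it by degree, the polynomial $f_j(z,w)$ becomes the tail of $\sum_{\alpha}\frac{(n)_{|\alpha|}}{\alpha!}z^{\alpha}(1-\langle z,w\rangle)^n\bar w^{\alpha}$, so that
\begin{equation*}
f_j(\mathbf{T}^{*},\mathbf{T}) \;=\; \sum_{|\alpha|\geq N_j}\frac{(n)_{|\alpha|}}{\alpha!}\mathbf{T}^{*\alpha}\,\triangle^{(n)}_\mathbf{T}\,\mathbf{T}^{\alpha},
\end{equation*}
for appropriate $N_j\to\infty$. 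The formal identity $\bigl(\sum_r \frac{(n)_r}{r!}x^r\bigr)(1-x)^n=1$ together with a telescoping rearrangement yields a partial-sum formula
\begin{equation*}
I - \sum_{|\alpha|\leq N}\frac{(n)_{|\alpha|}}{\alpha!}\mathbf{T}^{*\alpha}\triangle^{(n)}_\mathbf{T}\mathbf{T}^{\alpha} \;=\; \sum_{l=0}^{n-1}c_{l,N}\,\mathbf{M}^{N+1}_\mathbf{T}\bigl(\triangle^{(l)}_\mathbf{T}\bigr),
\end{equation*}
with non-negative coefficients $c_{l,N}$. Because all $\triangle^{(l)}_\mathbf{T}\geq 0$ by the $n$-hypercontraction hypothesis, each summand on the right is bounded in operator norm by $c_{l,N}\mathbf{M}^{N+1}_\mathbf{T}(I)$, so the right-hand side tends to $0$ in SOT if and only if $\mathbf{M}^s_\mathbf{T}(I)\to 0$ SOT. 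This is exactly the condition $\lim_j f_j(\mathbf{T}^{*},\mathbf{T})h=0$ from Theorem \ref{11.19}.

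Having matched the two conditions, one direction of Lemma \ref{lem3.5} follows immediately from Theorem \ref{11.19}. For the converse, I would check that $\mathbf{M}_z^{*}$ on $\mathcal{H}_n$ itself is $n$-hypercontractive and pure, then verify that both properties are inherited by any $\mathbf{M}_z^{*}$-invariant restriction. Hypercontractivity of $\mathbf{M}_z^{*}$ on $\mathcal{H}_n$ is a direct computation on the monomial basis: one finds that each $\triangle^{(l)}_{\mathbf{M}_z^{*}}$ acts diagonally with non-negative eigenvalues $\prod_{k=0}^{l-1}\frac{n-1-k+|\alpha|^{*}}{n+|\alpha|-1}$-type ratios for $1\leq l\leq n$, while purity reduces to $|\alpha|/(n+|\alpha|-1)\to 1$. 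Inheritance under restriction rests on the identity $\mathbf{T}^{*\alpha}=P_{\mathcal{N}}(M_z)^{\alpha}P_{\mathcal{N}}$ valid for $\mathbf{M}_z^{*}$-invariant $\mathcal{N}$, which yields $\mathbf{M}^s_\mathbf{T}(I_{\mathcal{N}})=P_{\mathcal{N}}\mathbf{M}^s_{\mathbf{M}_z^{*}}(I)P_{\mathcal{N}}\vert_{\mathcal{N}}$ and therefore $\triangle^{(l)}_\mathbf{T}=P_{\mathcal{N}}\triangle^{(l)}_{\mathbf{M}_z^{*}}P_{\mathcal{N}}\vert_{\mathcal{N}}\geq 0$.

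The main obstacle is the telescoping identity in the second paragraph: establishing that the partial sums $\sum_{|\alpha|\leq N}\frac{(n)_{|\alpha|}}{\alpha!}\mathbf{T}^{*\alpha}\triangle^{(n)}_\mathbf{T}\mathbf{T}^{\alpha}$ form an increasing approximation to $I$ whose defect is controlled by $\mathbf{M}^{N+1}_\mathbf{T}(I)$. This is a combinatorial identity for non-commuting operators that crucially uses the full $n$-hypercontraction hypothesis: the positivity of every intermediate $\triangle^{(l)}_\mathbf{T}$, $0\leq l\leq n$, is what guarantees that the tail terms cannot cancel and that SOT-convergence of $\mathbf{M}^s_\mathbf{T}(I)$ alone controls the convergence of the model map. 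Without this, one could only recover the weaker single-condition version of Theorem \ref{11.19}, not the refined equivalence with hypercontractivity and purity asserted in Lemma \ref{lem3.5}.
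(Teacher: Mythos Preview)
The paper does not supply its own proof of Lemma~\ref{lem3.5}; it is quoted verbatim from M\"{u}ller--Vasilescu \cite{MV1993} and used as a black box. So there is nothing in the paper to compare your argument against line by line. That said, your plan to route the lemma through Theorem~\ref{11.19} (the Ambrozie--Engli\v{s}--M\"{u}ller model theorem) is legitimate, and the identification $\frac{1}{K_n}(\mathbf{T}^{*},\mathbf{T})=\triangle^{(n)}_{\mathbf{T}}$ together with the converse paragraph (inheritance of hypercontractivity and purity under restriction) are both correct. In spirit your telescoping argument is exactly the M\"{u}ller--Vasilescu computation, just repackaged.

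There is, however, a genuine soft spot in your remainder estimate. You write the tail as $\sum_{l=0}^{n-1}c_{l,N}\,\mathbf{M}^{N+1}_{\mathbf{T}}(\triangle^{(l)}_{\mathbf{T}})$ and then bound each summand by $c_{l,N}\,\mathbf{M}^{N+1}_{\mathbf{T}}(I)$, concluding the tail goes to $0$ because $\mathbf{M}^{s}_{\mathbf{T}}(I)\to 0$. But the coefficients $c_{l,N}$ are not bounded in $N$: already for $n=2$ the polynomial identity gives $1-S_{N}(x)(1-x)^{2}=x^{N+1}+(N+1)\,x^{N+1}(1-x)$, so $c_{1,N}=N+1$. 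The naive bound $(N+1)\,\mathbf{M}^{N+1}_{\mathbf{T}}(I)$ need not tend to $0$ from purity alone. What rescues the argument---and this is where the \emph{full} $n$-hypercontraction hypothesis actually enters---is that $\triangle^{(l)}_{\mathbf{T}}\geq 0$ for every $l$ forces the sequence $s\mapsto\mathbf{M}^{s}_{\mathbf{T}}(I)$ to be completely monotone of order $n$: its successive differences $b_{s}=\mathbf{M}^{s}_{\mathbf{T}}(I)-\mathbf{M}^{s+1}_{\mathbf{T}}(I)$ are themselves decreasing (and so on to higher order), and for a decreasing non-negative summable sequence one does have $s\,b_{s}\to 0$. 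The cleanest way to organize this is the iterated telescoping that M\"{u}ller--Vasilescu actually use: from $(I-\mathbf{M}_{\mathbf{T}})(\triangle^{(j-1)}_{\mathbf{T}})=\triangle^{(j)}_{\mathbf{T}}$ one gets $\triangle^{(j-1)}_{\mathbf{T}}=\sum_{r\geq 0}\mathbf{M}^{r}_{\mathbf{T}}(\triangle^{(j)}_{\mathbf{T}})$ in SOT for each $j$, and iterating from $j=1$ to $j=n$ followed by a positive-term rearrangement yields $I=\sum_{r\geq 0}\binom{n+r-1}{r}\mathbf{M}^{r}_{\mathbf{T}}(\triangle^{(n)}_{\mathbf{T}})$ directly, with no need to control the explicit remainder coefficients. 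I would replace your single displayed remainder formula by this $n$-step induction; the rest of your outline then goes through.
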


Another important tool for our work in the current section comes from the following result by D. K. Keshari in \cite{D}:
\begin{lemma}\label{lem}
Let $\mathcal{E}$ be a Hermitian holomorphic vector bundle of rank $n$ over $\Omega$. Then the curvature matrices of the determinant bundle $\det \mathcal{E}$ and of the vector bundle $\mathcal{E}$ satisfy the equality
$${K}_{\det\mathcal{E}}=\text{trace }{K}_{\mathcal{E}}.$$
\end{lemma}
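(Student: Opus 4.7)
The plan is to reduce the identity to a local computation in an arbitrary holomorphic frame and then invoke Jacobi's formula for the logarithmic derivative of a determinant. Let $\sigma = \{\sigma_1, \ldots, \sigma_n\}$ be a holomorphic frame of $\mathcal{E}$ on an open set $U \subset \Omega$, and let $h(w) = (\langle \sigma_j(w), \sigma_i(w)\rangle)_{i,j=1}^n$ be the associated Gram matrix, which is positive-definite Hermitian on $U$. Then $\sigma_1 \wedge \cdots \wedge \sigma_n$ serves as a local holomorphic frame of the line bundle $\det \mathcal{E}$, and the induced Hermitian metric on $\det \mathcal{E}$ is the scalar $\det h(w)$. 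Applying the curvature formula (\ref{curvature}) to this line bundle, one has
$$K_{\det \mathcal{E}}^{i,j}(w) = -\frac{\partial^2 \log \det h(w)}{\partial \overline{w}_j \partial w_i}, \quad 1 \leq i,j \leq m,$$
whereas the corresponding entries for $\mathcal{E}$ are the $n \times n$ matrices
$$K_\mathcal{E}^{i,j}(w) = -\frac{\partial}{\partial \overline{w}_j}\left(h^{-1}(w) \frac{\partial h(w)}{\partial w_i}\right).$$

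Next I would invoke Jacobi's formula: for a smooth family of invertible Hermitian matrices $h(w)$, one has
$$\frac{\partial \log \det h(w)}{\partial w_i} = \text{trace}\left(h^{-1}(w) \frac{\partial h(w)}{\partial w_i}\right).$$
Differentiating both sides with respect to $\overline{w}_j$ and using the fact that $\text{trace}$, being a linear functional on a finite-dimensional matrix space, commutes with partial differentiation, we obtain
$$\frac{\partial^2 \log \det h(w)}{\partial \overline{w}_j \partial w_i} = \text{trace}\left(\frac{\partial}{\partial \overline{w}_j}\left(h^{-1}(w)\frac{\partial h(w)}{\partial w_i}\right)\right).$$
Multiplying by $-1$ and comparing with the two expressions above gives $K_{\det \mathcal{E}}^{i,j}(w) = \text{trace}(K_\mathcal{E}^{i,j}(w))$ for every $i,j$, which is the entry-wise form of the claimed identity ${K}_{\det \mathcal{E}} = \text{trace}\, {K}_\mathcal{E}$.

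Since the computation was carried out in an arbitrary holomorphic frame, the local identities patch together globally: the trace of $K_\mathcal{E}$ is frame-independent (as verified in the preceding proposition for the case of operator bundles, and the same argument gives the analogous invariance here), while the curvature of the line bundle $\det\mathcal{E}$ is intrinsically defined. I do not anticipate a serious obstacle; the proof is essentially Jacobi's formula combined with the definitions. The only subtle point deserving explicit mention is the commutation of $\text{trace}$ with the $\overline{w}_j$-derivative, which is immediate from linearity of the trace.
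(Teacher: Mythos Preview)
Your proof is correct and is the standard argument via Jacobi's formula. Note, however, that the paper does not supply its own proof of this lemma: it is quoted as a result of D.~K.~Keshari from \cite{D} and used as a black box. Your argument is precisely the computation one would expect in the cited reference, so there is nothing to compare; your proposal simply fills in what the paper omits.
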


We now give a corresponding curvature inequality that holds for a $t$-hypercontractive tuple in the class $\mathcal{B}_{n}^{m}(\mathbb{B}_{m})$.
\begin{theorem}\label{theorem3.6}
Let $t \in \mathbb{N}$. For a $t$-hypercontraction $\mathbf{T}=(T_{1},\cdots,T_{m})\in \mathcal{B}_{n}^{m}(\mathbb{B}_{m})$,
$$K_{\det\mathcal{E}_{\mathbf{T}}}\leq K_{\det\mathcal{E}_{\bigoplus\limits_{1}^{n}\mathbf{M}_{z, t}^{*}}},$$
where ${\mathbf{M}_{z, t}^*}$ is on the space $\mathcal{H}_t$ with reproducing kernel $K(z,w)=\frac{1}{(1-\langle z,w\rangle)^{t}}$.
\end{theorem}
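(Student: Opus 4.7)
The plan is to realize $\mathbf{T}$ as the compression of a canonical model and then invoke the curvature-decreasing property of Hermitian holomorphic sub-bundles. Since $\mathbf{T}$ is a $t$-hypercontraction in $\mathcal{B}_{n}^{m}(\mathbb{B}_m)$, I first apply Lemma \ref{lem3.5} to obtain a Hilbert space $E$, an $\mathbf{M}^{*}_{z,t,E}$-invariant subspace $\mathcal{N}\subset\mathcal{H}_{t}\otimes E$, and a unitary intertwining $\mathbf{T}\sim_{u}\mathbf{M}^{*}_{z,t,E}\vert_{\mathcal{N}}$. The auxiliary SOT condition $\lim_{s}\mathbf{M}_{\mathbf{T}}^{s}(I)=0$ required by Lemma \ref{lem3.5} is available in this setting: the density condition (3) in the Cowen--Douglas definition together with the existence of joint eigenvectors at every $w\in\mathbb{B}_m$ rules out a spherical-isometry summand in the M\"{u}ller--Vasilescu Wold-type decomposition of a $t$-hypercontraction, forcing the Szeg\H{o} part to be all of $\mathbf{T}$.

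With this identification, $\ker(\mathbf{T}-w)=\ker(\mathbf{M}^{*}_{z,t,E}-w)\cap\mathcal{N}$, so $\mathcal{E}_{\mathbf{T}}$ embeds as a rank-$n$ Hermitian holomorphic sub-bundle of $\mathcal{E}_{\mathbf{M}^{*}_{z,t,E}}$ carrying the induced metric. A direct computation using the reproducing kernel $K_{t}(z,w)=(1-\langle z,w\rangle)^{-t}$ yields $\ker(\mathbf{M}^{*}_{z,t,E}-w)=\{K_{t}(\cdot,\overline{w})\otimes e:e\in E\}$ with Gram form $K_{t}(\overline{w},\overline{w})\cdot I_{E}$, so the Chern curvature of the ambient bundle is the scalar $(1,1)$-form
\begin{equation*}
\mathcal{K}_{\mathcal{E}_{\mathbf{M}^{*}_{z,t,E}}}(w)=\mathcal{K}_{\mathbf{M}^{*}_{z,t}}(w)\cdot I_{E},\quad w\in\mathbb{B}_{m}.
\end{equation*}
The decisive step is the classical Griffiths sub-bundle formula $\mathcal{K}_{\mathcal{F}}=\mathcal{K}_{\mathcal{G}}\vert_{\mathcal{F}}-B^{*}\wedge B$, where $B$ is the second fundamental form of type $(1,0)$; this matches the paper's sign convention $\mathcal{K}=-\partial_{\overline{w}_{j}}(h^{-1}\partial_{w_{i}}h)\,dw_{i}\wedge d\overline{w}_{j}$, as may be verified on the model example of a line sub-bundle inside a trivial rank-two bundle. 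Applied to $\mathcal{E}_{\mathbf{T}}\subset\mathcal{E}_{\mathbf{M}^{*}_{z,t,E}}$ it yields the matrix $(1,1)$-form inequality
\begin{equation*}
\mathcal{K}_{\mathcal{E}_{\mathbf{T}}}(w)\leq\mathcal{K}_{\mathbf{M}^{*}_{z,t}}(w)\cdot I_{n},\quad w\in\mathbb{B}_{m},
\end{equation*}
in the Griffiths sense.

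To finish, I take a pointwise orthonormal basis of $\mathcal{E}_{\mathbf{T}}(w)$ and sum the diagonal entries of the Griffiths inequality, converting the matrix inequality into the scalar trace inequality $\text{trace }\mathcal{K}_{\mathcal{E}_{\mathbf{T}}}\leq n\,\mathcal{K}_{\mathbf{M}^{*}_{z,t}}$. Two applications of Lemma \ref{lem} then identify both sides with determinant-bundle curvatures, namely $K_{\det\mathcal{E}_{\mathbf{T}}}=\text{trace }\mathcal{K}_{\mathcal{E}_{\mathbf{T}}}$ and $K_{\det\mathcal{E}_{\bigoplus_{1}^{n}\mathbf{M}^{*}_{z,t}}}=\text{trace }(\mathcal{K}_{\mathbf{M}^{*}_{z,t}}\cdot I_{n})=n\,\mathcal{K}_{\mathbf{M}^{*}_{z,t}}$, which together yield the claimed inequality. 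I expect the main obstacle to be the careful matching of the paper's sign convention for $\mathcal{K}$ with the standard Griffiths direction of the sub-bundle curvature inequality so that it points the correct way; a subsidiary technicality is justifying the SOT hypothesis of Lemma \ref{lem3.5}, which can otherwise be bypassed by embedding $\mathcal{E}_{\mathbf{T}}$ into $\mathcal{H}_{t}\otimes E$ directly through the defect operator $\mathcal{D}_{\mathbf{T}}=(\triangle_{\mathbf{T}}^{(t)})^{1/2}$ without invoking the full model theorem.
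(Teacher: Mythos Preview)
Your argument is correct and lands on the same inequality, but the route differs from the paper's in a way worth noting. The paper does not invoke the Griffiths sub-bundle formula directly. Instead, after applying Lemma~\ref{lem3.5}, it verifies by an explicit orthonormal-basis computation that every element of $\ker(\mathbf{M}^{*}_{z,t,E}\vert_{\mathcal{N}}-w)$ has the form $K(\cdot,\overline{w})\otimes e(w)$ for some $e(w)\in E$, so that a holomorphic frame of $\mathcal{E}_{\mathbf{T}}$ produces the metric factorization $h(w)=K(\overline{w},\overline{w})\,h_{\mathcal{E}}(w)$. Taking $\log\det$ turns this into the \emph{equality} $K_{\det\mathcal{E}_{\mathbf{T}}}=K_{\det\mathcal{E}_{\bigoplus_{1}^{n}\mathbf{M}_{z,t}^{*}}}+K_{\det\mathcal{E}}$, and the inequality follows because $K_{\det\mathcal{E}}$ is negative-definite by the result of Biswas--Keshari--Misra cited as \cite{BKM}. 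Your Griffiths argument applies the curvature-decreasing property to $\mathcal{E}_{\mathbf{T}}\subset\mathcal{E}_{\mathbf{M}^{*}_{z,t,E}}$ and then traces; the paper in effect first strips off the line-bundle factor $\mathcal{E}_{\mathbf{M}^{*}_{z,t}}$ and applies the same principle to the residual bundle $\mathcal{E}$ sitting inside the \emph{flat} bundle $\mathbb{B}_{m}\times E$, which is exactly what the \cite{BKM} citation encodes. Your approach is more conceptual and avoids the coordinate computation; the paper's approach yields the sharper additive identity rather than merely an inequality, and sidesteps any discussion of Griffiths positivity for possibly infinite-rank ambient bundles.
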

\begin{proof}
Since $\mathbf{T}\in \mathcal{B}_{n}^{m}(\mathbb{B}_{m})$ is a $t$-hypercontraction, by Lemma \ref{lem3.5}, there exist a Hilbert space $E$ and an $\mathbf{M}^*_{z,t,E}$-invariant subspace $\mathcal{N}\subset \mathcal{H}_{t}\otimes E$ such that $\mathbf{T}\sim_{u}\mathbf{M}^{*}_{z,t,E}\vert_{\mathcal{N}}$. Let
$$K(\cdot,\overline{w})\otimes e(\alpha, w)\in\ker (\mathbf{M}^{*}_{z, t,E}\vert_{\mathcal{N}}-w)=\bigcap \limits_{i=1}^{n} \ker (M_{z_{i}}^{*}\vert_{\mathcal{N}}-w_{i}).$$ If we denote by $\{a(\alpha)z^{\alpha}\}_{\alpha \in\mathbb{N}_{0}^{m}}$
an orthonormal basis for $\mathcal{H}_t \otimes E,$
then for every $f\in E(w)$, $\alpha'\in \mathbb{N}_{0}^{m}$, and $1\leq i\leq m$,
\begin{align}\nonumber
0=&\langle(M_{z_{i}}^{*}-w_{i})\sum_{\alpha\in\mathbb{N}_{0}^{m}}a(\alpha)\otimes e(\alpha, w)z^{\alpha}w^{\alpha}, a(\alpha')\otimes fz^{\alpha'}\rangle\\[4pt]\nonumber
=&\sum_{\alpha\in\mathbb{N}_{0}^{m}}\langle a(\alpha)\otimes e(\alpha, w)z^{\alpha}w^{\alpha},\,\, a(\alpha'+e_{i})\otimes fz^{\alpha'+e_{i}}\rangle\\[4pt]\nonumber
 &\qquad\qquad\qquad\qquad\qquad-\sum_{\alpha\in\mathbb{N}_{0}^{m}}\langle a(\alpha)\otimes e(\alpha, w)z^{\alpha}w^{\alpha+e_{i}}, a(\alpha')\otimes fz^{\alpha'}\rangle\\[4pt]\nonumber
=&\langle a(\alpha'+e_{i})\otimes e(\alpha'+e_{i}, w)z^{\alpha'+e_{i}}w^{\alpha'+e_{i}}, a(\alpha'+e_{i})\otimes fz^{\alpha'+e_{i}}\rangle\\[4pt]\nonumber
 &\qquad\qquad\qquad\qquad\qquad-\langle a(\alpha')\otimes e(\alpha',w)z^{\alpha'}w^{\alpha'+e_{i}}, a(\alpha')\otimes fz^{\alpha'}\rangle\\[4pt]\nonumber
=&\langle e(\alpha'+e_{i},w)-e(\alpha',w), f\rangle w^{\alpha'+e_{i}}.\nonumber
\end{align}
Hence, for every $\alpha,\beta\in \mathbb{N}_{0}^{m}$, $e(\alpha, w)=e(\beta, w)$, so that we can set $e(w):=e(\alpha, w)$. Then for all $ w\in\mathbb{B}_{m},$
$$\ker (\mathbf{M}^{*}_{z,t,E}\vert_{\mathcal{N}}-w)=\bigvee \{K(\cdot,\overline{w})\otimes e(w), e(w) \in E(w)\}.$$
Since $\dim \ker (\mathbf{M}^{*}_{z,t,E}\vert_{\mathcal{N}}-w)=\dim \ker (\mathbf{T}-w)=n,$ we can assume that $\{K(\cdot,\overline{w})\otimes e_{i}(w)\}_{i=1}^{n}$ is a basis for $\ker (\mathbf{M}^{*}_{z,t,E}\vert_{\mathcal{N}}-w)$ and $\mathcal{E}(w)=\bigvee\limits_{1\leq i\leq n}e_{i}(w)$. Therefore,
$$h(w)=K(\overline{w},\overline{w})\Bigg(\langle e_j(w),e_i(w)\rangle\Bigg)_{i,j=1}^{n}=K(\overline{w},\overline{w})h_{\mathcal{E}}(w),$$
and
$$
K_{\det\mathcal{E}_{\mathbf{T}}}
=\left(-\frac{\partial^2 \log(K(\overline{w},\overline{w}))^{n}}{\partial\overline{w}_{j}\partial w_{i}}-\frac{\partial^2 \log \left(\det h_{\mathcal{E}}(w)\right)}{\partial\overline{w}_{j}\partial w_i}\right)_{{i,j=1}}^{m}
=K_{\det\mathcal{E}_{\bigoplus \limits_{1} ^{n} \mathbf{M}_{z, t}^{*}}}+K_{\det\mathcal{E}}.
$$
Finally, since it is known from \cite{BKM} that $K_{\det\mathcal{E}}$ is negative definite, the proof is complete.
\end{proof}
We next show that the result of \cite{HJK} holds for the multi-operator case as well. For $\mathbf{T}=(T_{1},\cdots,T_{m})\in\mathcal{B}_{n}^{m}(\Omega) \subset \mathcal{L}(\mathcal{H})^m$, one defines a projection-valued function $\Pi : \Omega\rightarrow \mathcal{L}(\mathcal{H})$ that assigns to each $w\in\Omega,$ an orthogonal projection $\Pi(w)$ onto $\ker(\mathbf{T}-w)$.
Define an operator $\Gamma^{*}(w): \ker(\mathbf{T}-w)\rightarrow \mathbb{C}^{n}$ by
$$\Gamma^{*}(w)(f)=f(w),$$ where $f\in\ker(\mathbf{T}-w).$
Then, $h(w)=\Gamma^{*}(w)\Gamma(w)$ and $\Pi(w)=\Gamma(w)h^{-1}(w)\Gamma^{*}(w).$

\begin{theorem}\label{lem21}
Let $\mathbf{T}\in \mathcal{B}_{n}^{m}(\Omega)$ and let $\Pi(w)$ be an orthogonal projection onto $\ker(\mathbf{T}-w)$. Then for $w \in \Omega,$
$$\sum\limits_{i=1}^{m}\bigg{\Vert}\frac{\partial\Pi(w)}{\partial w_{i}}\bigg{\Vert}_{\mathfrak{S}_{2}}^{2}=-\text{trace } K_{\mathbf{T}}(w),$$
where $\mathfrak{S}_{2}$ denotes the Hilbert-Schmidt class of operators.
\end{theorem}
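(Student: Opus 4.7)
The plan is to differentiate the formula $\Pi = \Gamma h^{-1} \Gamma^{*}$ recorded just above the theorem and then match the resulting Hilbert--Schmidt norm, term by term, against $\text{trace}\,K_{\mathbf{T}}^{i,i}(w)$ from \eqref{curvature}. The crucial structural input is that the chosen holomorphic frame makes $\Gamma(w)$ depend holomorphically on $w$, so that $\partial\Gamma/\partial\overline{w}_{i}\equiv 0$ and, by adjunction, $\partial\Gamma^{*}/\partial w_{i}\equiv 0$.

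First I would apply the product rule, together with $\partial h^{-1}/\partial w_{i}=-h^{-1}(\partial h/\partial w_{i})h^{-1}$ and the identity $\partial h/\partial w_{i}=\Gamma^{*}(\partial\Gamma/\partial w_{i})$, to obtain the compact form
\begin{equation*}
\frac{\partial\Pi}{\partial w_{i}}=(I-\Pi)\frac{\partial\Gamma}{\partial w_{i}}\,h^{-1}\Gamma^{*}.
\end{equation*}
Self-adjointness of $\Pi$ then gives $\partial\Pi/\partial\overline{w}_{i}=\Gamma h^{-1}(\partial\Gamma^{*}/\partial\overline{w}_{i})(I-\Pi)$. Using $\|A\|_{\mathfrak{S}_{2}}^{2}=\text{trace}(A^{*}A)$, cyclicity of the trace (legitimate because $\partial\Pi/\partial w_{i}$ is finite rank), $\Gamma^{*}\Gamma=h$, $(I-\Pi)^{2}=I-\Pi$, and the substitutions $(\partial\Gamma^{*}/\partial\overline{w}_{i})\Gamma=\partial h/\partial\overline{w}_{i}$ and $\Gamma^{*}(\partial\Gamma/\partial w_{i})=\partial h/\partial w_{i}$ in the cross term, this reduces to
\begin{equation*}
\left\|\frac{\partial\Pi}{\partial w_{i}}\right\|_{\mathfrak{S}_{2}}^{2}=\text{trace}\!\left(h^{-1}\frac{\partial\Gamma^{*}}{\partial\overline{w}_{i}}\frac{\partial\Gamma}{\partial w_{i}}\right)-\text{trace}\!\left(h^{-1}\frac{\partial h}{\partial\overline{w}_{i}}h^{-1}\frac{\partial h}{\partial w_{i}}\right).
\end{equation*}

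Next I would expand the defining formula $K_{\mathbf{T}}^{i,i}=-\partial_{\overline{w}_{i}}(h^{-1}\partial h/\partial w_{i})$ as
\begin{equation*}
K_{\mathbf{T}}^{i,i}=h^{-1}\frac{\partial h}{\partial\overline{w}_{i}}h^{-1}\frac{\partial h}{\partial w_{i}}-h^{-1}\frac{\partial^{2}h}{\partial\overline{w}_{i}\partial w_{i}},
\end{equation*}
and invoke the holomorphy of $\Gamma$ once more to rewrite $\partial^{2}h/\partial\overline{w}_{i}\partial w_{i}=(\partial\Gamma^{*}/\partial\overline{w}_{i})(\partial\Gamma/\partial w_{i})$. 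Taking the trace and comparing with the preceding display shows that the two pairs of summands cancel with opposite signs, giving $\|\partial\Pi/\partial w_{i}\|_{\mathfrak{S}_{2}}^{2}=-\text{trace}\,K_{\mathbf{T}}^{i,i}$. Summing over $1\leq i\leq m$ yields the theorem, since $\text{trace}\,K_{\mathbf{T}}=\sum_{i=1}^{m}\text{trace}\,K_{\mathbf{T}}^{i,i}$.

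The main obstacle is keeping the bookkeeping of holomorphic versus antiholomorphic dependence clean, so that $\partial\Gamma/\partial\overline{w}_{i}$ and $\partial\Gamma^{*}/\partial w_{i}$ drop out at the correct three places: in the simplification of $\partial\Pi/\partial w_{i}$, in the identity $\Gamma^{*}(\partial\Gamma/\partial w_{i})=\partial h/\partial w_{i}$, and in the cancellation inside $\partial^{2}h/\partial\overline{w}_{i}\partial w_{i}$. Once this is handled, the theorem follows from the algebraic comparison above.
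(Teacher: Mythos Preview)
Your proof is correct and follows essentially the same approach as the paper: both differentiate the identity $\Pi=\Gamma h^{-1}\Gamma^{*}$, exploit the holomorphy of $\Gamma$ to kill the $\overline{w}$-derivatives, and match the result against the expansion of $K_{\mathbf{T}}^{i,i}=-\partial_{\overline{w}_{i}}(h^{-1}\partial_{w_{i}}h)$. The only cosmetic difference is that you first isolate the clean factorization $\partial_{w_{i}}\Pi=(I-\Pi)(\partial_{w_{i}}\Gamma)h^{-1}\Gamma^{*}$ before taking the norm, whereas the paper multiplies $\partial_{\overline{w}_{j}}\Pi\cdot\partial_{w_{i}}\Pi$ directly and simplifies to $-\Gamma K_{\mathbf{T}}^{i,j}h^{-1}\Gamma^{*}$; the underlying algebra is identical.
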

\begin{proof}
From $\Pi(w)=\Gamma(w)h^{-1}(w)\Gamma^{*}(w),$ we have for $w \in \Omega$,
\begin{align}\nonumber
\frac{\partial\Pi(w)}{\partial \overline{w}_{j}}\frac{\partial\Pi(w)}{\partial w_{i}}=&\Gamma(w)\Bigg[\frac{\partial h^{-1}(w)}{\partial \overline{w}_{j}}\frac{\partial h(w)}{\partial w_{i}} h^{-1}(w)+h^{-1}(w)\frac{\partial^{2} h(w)}{\partial \overline{w}_{j}\partial w_{i}}h^{-1}(w)\\[4pt]\nonumber
&\quad +\frac{\partial h^{-1}(w)}{\partial \overline{w}_{j}}h(w)\frac{\partial h^{-1}(w)}{\partial w_{i}}+h^{-1}(w)\frac{\partial h(w)}{\partial \overline{w}_{j}}\frac{\partial h^{-1}(w)}{\partial w_{i}} \Bigg] \Gamma^{*}(w)\\[4pt]\nonumber
=&\Gamma(w)\Bigg[\frac{\partial }{\partial \overline{w}_{j}}\left(h^{-1}(w)\frac{\partial h(w)}{\partial w_{i}}\right)h^{-1}(w)\\[4pt]\nonumber
 &\quad +\frac{\partial }{\partial \overline{w}_{j}}\left(h^{-1}(w)h(w)\right)\frac{\partial h^{-1}(w)}{\partial w_{j}}\Bigg]\Gamma^{*}(w)\\[4pt]\nonumber
=&\Gamma(w)\frac{\partial }{\partial \overline{w}_{j}}\left[h^{-1}(w)\frac{\partial h(w)}{\partial w_{i}}\right]h^{-1}(w)\Gamma^{*}(w)\\[4pt]\nonumber
=&-\Gamma(w)[K_{\mathbf{T}}^{i,j}(w)h^{-1}(w)]\Gamma^{*}(w)\nonumber.
\end{align}
Hence,
$$\begin{array}{lll}
\left\Vert\frac{\partial\Pi(w)}{\partial w_{i}}\right\Vert_{\mathfrak{S}_{2}}^{2}
&=&\text{trace } \left(\frac{\partial\Pi(w)}{\partial \overline{w}_{i}}\frac{\partial\Pi(w)}{\partial w_{i}}\right)\\
&=&-\text{trace } \left([K_{\mathbf{T}}^{i,i}(w)h^{-1}(w)]\Gamma^{*}(w)\Gamma(w)\right)\\
&=&-\text{trace }K_{\mathbf{T}}^{i,i}(w).
\end{array}$$
\end{proof}
\subsection{Final Remark}

For $T_1, T_2 \in \mathcal{L}(\mathcal{H})$, the \emph{Rosenblum operator} $\sigma_{T_{1},T_{2}}:\mathcal{L}(\mathcal{H})\rightarrow\mathcal{L}(\mathcal{H})$ is defined as
$$\sigma_{T_{1},T_{2}}(X)=T_{1}X-XT_{2},\quad X \in \mathcal{L}(\mathcal{H}).$$
 In \cite{Gilfeather}, F. Gilfeather showed that
an operator $T\in\mathcal{L}(\mathcal{H})$ is strongly irreducible if there is no non-trivial idempotent in $\{T\}'$. The following relationship between strong irreducibility and the class $\mathcal{FB}_2^1(\Omega)$ is given in \cite{JJDG}:

\begin{lemma}\label{lem2.10}
An operator $T=\begin{pmatrix} T_{1}\,\, & T_{1,2}\\ 0 \,\,& T_{2}\end{pmatrix} \in \mathcal{F}\mathcal{B}_{2}^{1}(\Omega)$ is strongly irreducible if and only if $T_{1,2}\notin \text{ ran } \sigma_{T_{1},T_{2}}.$
\end{lemma}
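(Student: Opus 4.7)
The plan is to prove the equivalence in two steps by exploiting the $2\times 2$ block structure of $T$ with respect to the decomposition $\mathcal{H}=\mathcal{H}_1\oplus\mathcal{H}_2$.

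For the direction $(\Leftarrow)$ I argue the contrapositive. Suppose $T_{1,2}=T_1 Y-YT_2$ for some $Y\in\mathcal{L}(\mathcal{H}_2,\mathcal{H}_1)$, and set $P=\begin{pmatrix}I & Y\\ 0 & I\end{pmatrix}$. This is invertible, and a direct block calculation yields $PTP^{-1}=T_1\oplus T_2$; the nontrivial idempotent $\begin{pmatrix}I & 0\\0 & 0\end{pmatrix}\in\{T_1\oplus T_2\}'$ then conjugates back to a nontrivial idempotent in $\{T\}'$, so $T$ is not strongly irreducible.

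For the direction $(\Rightarrow)$ I assume a nontrivial idempotent $E=(E_{ij})\in\{T\}'$ is given and aim to produce $Y$ solving $\sigma_{T_1,T_2}(Y)=T_{1,2}$. Expanding $ET=TE$ produces the four block relations
\begin{align*}
[T_1,E_{11}]&=-T_{1,2}E_{21}, & E_{21}T_1&=T_2 E_{21},\\
T_1 E_{12}-E_{12}T_2&=E_{11}T_{1,2}-T_{1,2}E_{22}, & [T_2,E_{22}]&=E_{21}T_{1,2},
\end{align*}
and $E^2=E$ produces the complementary identities, notably $E_{11}E_{12}+E_{12}E_{22}=E_{12}$ and $E_{21}E_{11}+E_{22}E_{21}=E_{21}$. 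A crucial auxiliary fact is that every $T_i\in\mathcal{B}_1^1(\Omega)$ is itself strongly irreducible: any idempotent $X\in\{T_i\}'$ acts on the one-dimensional fiber $\ker(T_i-w)$ by a scalar $\lambda(w)\in\{0,1\}$ that depends holomorphically, hence constantly, on $w$ over the connected domain $\Omega$, and density of the span of the eigenvectors then forces $X\in\{0,I\}$. Once a nontrivial idempotent $E$ with $E_{21}=0$ is in hand, the first and last commutator relations give $E_{11}\in\{T_1\}'$ and $E_{22}\in\{T_2\}'$, both idempotent and hence in $\{0,I\}$; nontriviality of $E$ combined with $E_{11}E_{12}+E_{12}E_{22}=E_{12}$ rules out $(E_{11},E_{22})=(0,0)$ and $(I,I)$, leaving $(E_{11},E_{22})\in\{(I,0),(0,I)\}$. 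The cross equation then reduces to $\sigma_{T_1,T_2}(E_{12})=\pm T_{1,2}$, placing $T_{1,2}$ in $\text{ran}\,\sigma_{T_1,T_2}$.

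The main obstacle is the reduction step to $E_{21}=0$. The intertwining relation $E_{21}T_1=T_2 E_{21}$ between the rank-one Cowen--Douglas operators $T_1,T_2$ produces a holomorphic scalar $\phi$ with $E_{21}t_1(w)=\phi(w)t_2(w)$ on their holomorphic frames; combining this with the idempotency identity $E_{21}E_{11}=(I-E_{22})E_{21}$ and with the commutator $[T_2,E_{22}]=E_{21}T_{1,2}$, one modifies $E$ through an invertible element of $\{T\}'$ to obtain a new nontrivial idempotent whose $(2,1)$-block vanishes. The delicate point is preserving nontriviality under this modification, and this is precisely where the rank-one structure of the Cowen--Douglas bundles $\mathcal{E}_{T_1},\mathcal{E}_{T_2}$ and the connectedness of $\Omega$ are used essentially.
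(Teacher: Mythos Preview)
The paper does not prove this lemma; it quotes it from \cite{JJDG}. Your $(\Leftarrow)$ direction is correct, and your argument for $(\Rightarrow)$ \emph{after} the reduction to $E_{21}=0$ is also correct: once $E_{21}=0$ the diagonal blocks $E_{11},E_{22}$ are idempotents commuting with the strongly irreducible $T_1,T_2\in\mathcal{B}_1^1(\Omega)$, the case analysis is clean, and the $(1,2)$ intertwining relation delivers $T_{1,2}\in\text{ran}\,\sigma_{T_1,T_2}$.

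The gap is the reduction itself. You write that ``one modifies $E$ through an invertible element of $\{T\}'$ to obtain a new nontrivial idempotent whose $(2,1)$-block vanishes,'' but you do not exhibit any such invertible element or explain why conjugation by it would annihilate $E_{21}$. In fact this strategy cannot work as stated: the structural result in \cite{JJDG} is precisely that \emph{every} $X\in\{T\}'$ is block upper triangular when $T\in\mathcal{FB}_2^1(\Omega)$, so any invertible $Z\in\{T\}'$ already has $Z_{21}=0$, and then the $(2,1)$-block of $ZEZ^{-1}$ is $Z_{22}E_{21}Z_{11}^{-1}$, which vanishes only if $E_{21}$ already does. Thus conjugation inside $\{T\}'$ never ``kills'' a nonzero $E_{21}$; rather, one must prove directly that $E_{21}=0$. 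The argument in \cite{JJDG} does this by exploiting the flag structure of the generalized eigenspaces $\ker(T-w)^k$: the nonzero intertwiner $T_{1,2}$ forces the subspace $\mathcal{H}_1\oplus 0$ to be distinguished inside this flag in a way that every operator commuting with $T$ must preserve it. Your observations that $E_{21}t_1(w)=\psi(w)t_2(w)$ and $[T_2,E_{22}]=E_{21}T_{1,2}$ are relevant ingredients, but the step from these to $\psi\equiv 0$ requires a genuine argument (using, for instance, that $(T_2-w)X_{22}t_2(w)=\phi(w)\psi(w)t_2(w)$ together with the structure of $\ker(T_2-w)^2$), not a conjugation trick.
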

We conclude with an example illustrating the complexity of the similarity problem. It implies that the trace of the curvature matrix is not always a proper similarity invariant for the class $\mathcal{B}^m_n(\Omega)$.
Let $T_{1}$ and $T_{2}$ be backward shift operators on Hilbert spaces $\mathcal{H}_{K_1}$ and $\mathcal{H}_{K_2}$ defined on the unit disk $\mathbb{D}$ with reproducing kernels $K_{1}(z,w)=\frac{1}{1-\overline{w}z}$ and $K_{2}(z,w)=\frac{1}{(1-\overline{w}z)^{3}}$, respectively.
\begin{example}\label{example4.7} Consider
$\widetilde{T}=\begin{pmatrix} T_{1}\,\, & 0\\ 0\,\, & T_{2}\end{pmatrix}\in \mathcal{B}_{2}^{1}(\mathbb{D})$ and
$T=\begin{pmatrix} T_{1}\, & T_{1,2}\\ 0\, & T_{2} \end{pmatrix}\in \mathcal{F}\mathcal{B}_{2}^{1}(\mathbb{D})$. Suppose that $$T_{1,2}\notin \text{ ran }\sigma_{T_{1},T_{2}}(X)=\{T_{1}X-XT_{2}:~X\in\mathcal{L}(\mathcal{H}_{K_{2}},\mathcal{H}_{K_{1}})\}.$$

Let us first note that $t_{1}(w):=K_{1}(\cdot,\overline{w})\in \ker(T_1-w)$ and $t_{2}(w):=K_{2}(\cdot,\overline{w})\in \ker(T_2-w)$. It can be easily checked that $\{t_1,t_1'+t_2\}$ is a holomorphic frame of $\mathcal{E}_{T}$ and that
$$\begin{array}{lllll}
h_{T}(w)&=&\left(\begin{array}{ccccc}\frac{1}{1-\vert w\vert^{2}}&
\frac{\partial}{\partial w}\frac{1}{1-\vert w\vert^{2}}\\
\frac{\partial}{\partial\overline{w}}\frac{1}{1-\vert w\vert^{2}}&\quad
\frac{\partial^2}{\partial\overline{w}\partial w}\frac{1}{1-\vert w\vert^{2}}
+\frac{1}{(1-\vert w\vert^{2})^{3}}
\end{array}\right).\\
\end{array}
$$
It follows that $\det h_{T}(w)=\frac{2}{(1-\vert w\vert^{2})^{4}}.$ Now by Lemma \ref{lem}, we know that
$\text{trace }{K}_{T}(w)={K}_{\text{det }{ \mathcal{E}}_{T}}(w)=-\frac{4}{(1-\vert w\vert^{2})^{2}}.$
On the other hand,
$\text{trace }{K}_{\widetilde{T}} (w)={K}_{T_{1}}(w)+{K}_{T_{2}}(w)=-\frac{4}{(1-\vert w\vert^{2})^{2}},$
so that $\text{trace }{K}_{T}=\text{trace }{K}_{\widetilde{T}}.$

Now, since $T_{1,2}\notin \text{ ran }\sigma_{T_{1},T_{2}}$, it follows from Lemma \ref{lem2.10} that $T$ is strongly irreducible. Thus, $\{T\}'$ has no non-trivial idempotent elements. If $T$ and $\widetilde{T}$ were indeed similar, then there has to be an invertible operator $X\in \mathcal{L}(\mathcal{H})$ such that $T=X^{-1}\widetilde{T}X$. If we set $Y:=\Big (\begin{smallmatrix} I_{\mathcal{H}_{K_{1}}} & 0\\ 0\,\, & 0\end{smallmatrix}\Big ),$
then $Y\in \{\widetilde{T}\}'$ and $X^{-1}YX$ is a non-trivial idempotent. However,
$$(X^{-1}YX)T=X^{-1}Y\widetilde{T}X=X^{-1}\widetilde{T}YX=(X^{-1}\widetilde{T}X)(X^{-1}YX)=T(X^{-1}YX),$$
so that $X^{-1}YX\in \{T\}'$, and this is a contradiction. Hence, $T$ and $\widetilde{T}$ are not similar.
\end{example}
\section{Statements and Declarations}
\begin{enumerate}
    \item Funding: The research leading to these results received funding from Hebei Normal University under Grant Agreement No. CXZZSS2019061 and from the National Science Foundation of China under Grant Agreement No. 11922108.

    \item Conflicts of Interest/Competing Interests: The authors have no relevant financial or non-financial interests to disclose.
\end{enumerate}

\end{document}